\DeclareFontFamily{U}{matha}{\hyphenchar\font45}
\DeclareFontShape{U}{matha}{m}{n}{
	<5> <6> <7> <8> <9> <10> gen * matha
	<10.95> matha10 <12> <14.4> <17.28> <20.74> <24.88> matha12
}{}
\DeclareSymbolFont{matha}{U}{matha}{m}{n}
\DeclareFontFamily{U}{mathx}{\hyphenchar\font45}
\DeclareFontShape{U}{mathx}{m}{n}{
	<5> <6> <7> <8> <9> <10>
	<10.95> <12> <14.4> <17.28> <20.74> <24.88>
	mathx10
}{}
\DeclareSymbolFont{mathx}{U}{mathx}{m}{n}
\DeclareMathDelimiter{\vvvert}{0}{matha}{"7E}{mathx}{"17}
\newcommand{\tnorm}[1]{{\left\vvvert #1 \right\vvvert}}
\newcommand{\jump}[1]{\llbracket #1 \rrbracket}
\newcommand{\av}[1]{\{\!\!\{#1\}\!\!\}}
\def\ps@pprintTitle{%
	\let\@oddhead\@empty
	\let\@evenhead\@empty
	\def\@oddfoot{}%
	\let\@evenfoot\@oddfoot}
\newtheorem{theorem}{Theorem}[section]
\newtheorem{lemma}{Lemma}[section]
\newtheorem{proposition}{Proposition}[section]
\theoremstyle{definition}
\newtheorem{definition}{Definition}[section]
\theoremstyle{remark}
\newtheorem{remark}{Remark}[section]
\begin{document}

\begin{frontmatter}	
	\title{Convergence to weak solutions of a space-time hybridized discontinuous Galerkin method for 
		the incompressible Navier--Stokes equations}
	
	\author[KLAK]{Keegan L. A. Kirk\fnref{label1}}
	\ead{k4kirk@uwaterloo.ca}
	\address[KLAK]{Department of Applied Mathematics, University of
		Waterloo, Canada} 
	
	\author[AC]{Ay\c{c}{\i}l \c{C}e\c{s}mel\.{i}o\u{g}lu\fnref{label2}}
	\ead{cesmelio@oakland.edu}
	\address[AC]{Department of Mathematics and Statistics, Oakland
		University, U.S.A.}
	
	\author[KLAK]{Sander Rhebergen\fnref{label3}}
	\ead{srheberg@uwaterloo.ca}
	
	\begin{abstract}
	We prove that a space-time hybridized discontinuous Galerkin method for the evolutionary Navier--Stokes equations converges
	to a weak solution as the time step and mesh size tend to zero. Moreover, we show that this weak solution satisfies the energy inequality.
	To perform our analysis, we make use of discrete functional analysis tools and a discrete version of the Aubin--Lions--Simon theorem.
	\end{abstract}

	\begin{keyword}
		Navier--Stokes \sep space-time \sep hybridized
		\sep discontinuous Galerkin \sep minimal regularity
	\end{keyword}
	
\end{frontmatter}

	\section{Introduction}
	\label{s:introduction}
	In this article, we continue our study of the space-time hybridized discontinuous Galerkin (HDG) method
	for the evolutionary incompressible Navier--Stokes equations
	analyzed in \cite{Kirk:2021}. Therein, we proved that the method is \emph{pressure-robust} (see \Cref{ss:related_results} below) and 
	derived optimal rates of convergence in space and time for the velocity
	field which are independent of the pressure.
	However, as a discontinuous method, the HDG method introduces additional
	stabilization which is a potential source of consistency error
	if the exact solution is not sufficiently regular. 
	Consequently,
	the convergence results deduced from the standard a priori analysis of DG and HDG methods often 
	exclude the case of non-smooth solutions which may be present
	in physically realistic scenarios.
	
	For this reason, our analysis in \cite{Kirk:2021} considered \emph{strong solutions}
	of the Navier--Stokes system, and cannot be used to deduce convergence
	to \emph{weak solutions} in the absence
	of additional regularity. 
	This restricts
	the applicability of
	our analysis in two important ways. The first is that in three spatial dimensions,
	the existence of a strong solution is guaranteed only for sufficiently small problem data (see e.g. \cite{Boyer:book,Constantin:book,Temam:book}).
	The second is that, for a polygonal ($d=2$) or polyhedral domain $(d=3)$, the exact solution is known to possess
	sufficient regularity for consistency only under further geometric assumptions on the domain (e.g. \emph{convexity}); see \cite{Dauge:1989}.
	
	The purpose of this paper is to fill this gap. To circumvent the problems
	posed by the lack of consistency in our numerical scheme, we
	instead consider the concept of \emph{asymptotic consistency} introduced
	in \cite[Section 5.2]{Pietro:book}.
	That is, we aim to show the discrete weak formulation resulting from our
	space-time HDG discretization converges to the exact weak formulation of the Navier--Stokes equations in
	a suitable sense
	as the mesh size tends to zero. 
	This is made challenging by the discontinuous nature of our numerical method,
	as standard compactness results like the Rellich--Kondrachov theorem \cite[Theorem III.2.34]{Boyer:book}
	and the Aubin--Lions--Simon theorem \cite[Theorem II.5.16]{Boyer:book} are lost and appropriate
	discrete analogues must be derived.
	
	Fortunately, discrete compactness for DG schemes is, at this point, well studied. We mention
	in particular the works of Buffa and Ortner \cite{Buffa:2009}, Di Pietro and Ern \cite{DiPietro:2010}, and
	Kikuchi \cite{Kikuchi:2012}, wherein discrete versions
	of the Rellich--Kondrachov theorem are proven for broken Sobolev and broken polynomial spaces. A common theme among these works is the introduction
	of a discrete analogue of the gradient operator that incorporates information from
	the \emph{jumps} of the discrete solution across its discontinuities. As
	for a discrete analogue of the Aubin--Lions--Simon theorem in the time-dependent setting, we mention
	the work of Walkington \cite{Walkington:2010} where it is shown
	that DG time stepping methods enjoy similar compactness
	properties to the evolutionary equations they are used to approximate.
	Unfortunately, the results of \cite{Walkington:2010} are valid only
	for conforming spatial discretizations. This was remedied in \cite{Li:2015},
	wherein a generalization of the work of Walkington valid
	for broken Sobolev spaces (and thus, for a broad class of non-conforming discretizations)
	is obtained.
	
	In this article, we adapt some of the available discrete functional analysis
	tools \cite{DiPietro:2010} to the HDG setting (see also \cite{Kikuchi:2012} for similar efforts).
	We also prove a variation of the discrete Aubin--Lions--Simon theorem
	in \cite{Walkington:2010} valid for our non-conforming discretization.
	Our result differs slightly from that of \cite{Li:2015} in that we stay entirely
	within the framework of broken polynomial spaces. In an effort to unify the available discrete
	functional analysis tools for spatial DG discretizations and DG time stepping, 
	we introduce a discrete time derivative operator in analogy with the aforementioned
	discrete gradient operator using the time lifting operator in \cite{Schotzau:2010}, and we show that some of the assumptions required in \cite{Li:2015,Walkington:2010}
	for compactness can be interpreted using this discrete time derivative. The main contributions of this work can be summarized as follows:
	\begin{enumerate}[label=(\roman*)]
		\item We prove that the discrete solution of space-time HDG scheme analyzed in \cite{Kirk:2021} for strong solutions converges to a Leray--Hopf weak solution of
		the evolutionary Navier--Stokes equations. To our knowledge, this is one of the few minimal regularity convergence results available
		for HDG discretizations in general, and it is the first for a space-time HDG discretization. As a byproduct, we obtain
		a new proof of the existence of weak solutions of the Navier--Stokes equations.
		\item We introduce the notion of a discrete time derivative and show that it serves
		as an appropriate approximation of the distributional time derivative.
		\item We obtain a minor variation of the discrete Aubin--Lions--Simon compactness theorem proven in \cite[Theorem 3.1]{Walkington:2010}
		valid for broken polynomial spaces, as a special case of \cite[Theorem 3.2]{Li:2015}, using the discrete functional analysis tools of DiPietro and Ern \cite{DiPietro:2010}.
		Moreover, we show that uniform bounds on the discrete time derivative in a discrete dual space suffice for compactness for DG time stepping schemes, by analogy to the corresponding continuous theorem.
	\end{enumerate}
	The article is organized as follows: in the remainder of \Cref{s:introduction}, we give a brief
	overview of the relevant literature and introduce some of the notation that will be used throughout.
	In \Cref{s:preliminaries}, we introduce the space-time HDG method under consideration
	and recall some of the key results obtained in \cite{Kirk:2021}. In \Cref{s:disc_operators},
	we introduce discrete analogues of the gradient operator and time derivative, and
	recast the numerical scheme in terms of these discrete operators. In \Cref{s:bnds_compact},
	we prove that these discrete operators are bounded uniformly with respect to the mesh size
	and time step, and as a consequence we obtain convergence of the sequence
	of discrete velocity solutions as the mesh size and time step tend to zero. In \Cref{s:convergence}, we show that the
	limit of this sequence of discrete solutions is a weak solution to the Navier--Stokes equations.

	\subsection{Related results}
	\label{ss:related_results}
	A shortfall of many popular discretizations for incompressible flows
	is that they fail to preserve a fundamental invariance property
	enjoyed at the continuous level: perturbations in the external forcing
	term by a gradient field should not influence the velocity field
	\cite{Linke:2014}. Violation of this invariance property at the
	discrete level can lead to a coupling of the approximation errors in
	the velocity and pressure and introduces an unfavourable scaling in
	the velocity error with respect to inverse powers of the kinematic
	viscosity.  Discretizations that do preserve this invariance property
	have been coined \emph{pressure-robust}, and have recently been the
	subject of intensive study
	\cite{Fu:2019,John:2017,Lederer:2017,Lederer:2017b,Lehrenfeld:2016,Linke:2014,Schroeder:2018}.
	
	The two requisites to ensure that a discretization is pressure-robust
	are exact satisfaction of the divergence constraint and $H(\text{div};\Omega)$-conformity
	of the velocity field. While the latter is automatically satisfied for
	classical conforming discretizations like the Taylor--Hood element \cite{Hood:1974}, the former is not.
	In fact, conforming discretizations with exact mass conservation are scarce; 
	examples include the Scott--Vogelius element \cite{Scott:1985} and the more
	recent Guzm\'an--Neilan element \cite{Guzman:2014b, Guzman:2014a}. Instead, one can use $H(\text{div};\Omega)$-conforming DG methods such as the Raviart--Thomas (RT) element 
	or the Brezzi--Douglas--Marini (BDM) element \cite{Boffi:book} to obtain pressure-robust discretizations.
	To our knowledge, the use of $H(\text{div};\Omega)$-conforming DG methods
	for incompressible flows was first explored in \cite{Cockburn:2007}.
	
	In \cite{Rhebergen:2018a}, a simple class of pressure-robust HDG
	methods is introduced.  By leveraging hybridization, an
	$H(\text{div};\Omega)$-conforming discrete velocity field can be
	obtained while working with completely discontinuous finite element
	spaces.  In \cite{Horvath:2019, Horvath:2020}, a space-time HDG method
	that remains pressure-robust on moving and deforming domains is
	introduced based on this discretization; however, a rigorous analysis
	of this method is lacking. Our work in \cite{Kirk:2021} is a first
	step in this direction. We mention also the papers
	\cite{Ahmed:2020,Chrysafinos:2010,Masud:1997,Rhebergen:2012,Rhebergen:2013b}
	wherein alternative space-time finite element discretizations of the
	Stokes and Navier--Stokes problem are considered.
	
	We remark that our analysis of convergence to weak solutions is
	greatly facilitated by the $H(\text{div};\Omega)$-conformity of the
	discrete velocity as well as the fact that it is point-wise solenoidal. This
	ensures that, though our finite element spaces are completely
	discontinuous, the discrete velocity solution belongs to the
	continuous space $H$ defined below. This is particularly important
	when we seek to extract a convergent subsequence of approximate
	velocities in \Cref{s:bnds_compact}, as no additional effort will be
	required to show that accumulation points satisfy the
	incompressibility constraint. Moreover, this guarantees that the
	discrete time derivative defined in \Cref{s:disc_operators} actually
	belongs to the same continuous dual space as the distributional time
	derivative of the exact solution.
	
	\subsection{Notation}
	We use standard notation for Lebesgue and Sobolev spaces: given a
	bounded measurable set $D$, we denote by $L^p(D)$ the space of
	$p$-integrable functions. When $p=2$, we denote the $L^2(D)$ inner
	product by $(\cdot,\cdot)_D$. We denote by $W^{k,p}(D)$ the Sobolev
	space of $p$-integrable functions whose distributional derivatives up to order $k$ are
	$p$-integrable.  When $p=2$, we write $W^{k,p}(D) = H^k(D)$. We
	define $H_0^1(D)$ to be the subspace of $H^1(D)$ of functions with
	vanishing trace on the boundary of $D$.  We denote the space of
	polynomials of degree $k\ge0$ on $D$ by $P_k(D)$. We use 
	standard notation for spaces of vector valued functions with $d$ components, e.g. $L^2(D)^d$, $H^k(D)^d$, $P^k(D)^d$, etc.
	At times we drop the superscript for convenience, e.g. we denote by $\norm{\cdot}_{L^2(\Omega)}$  
	the norm on both $L^2(\Omega)$ and $L^2(\Omega)^d$.
	
	Next, let $U$ be a Banach space, $I = [a,b]$ an interval in $\mathbb{R}$, and $1 \le p < \infty$.
	We denote by
	$L^p(I;U)$ the Bochner space of $p$-integrable functions defined on 
	$I$ taking values in $U$. 
	When $p = \infty$, we denote by $L^{\infty}(I;U)$ the Bochner space of 
	essentially bounded functions taking values in $U$ and by $C(I;U)$ the space
	of (time) continuous functions taking values in $U$. 
	Finally, we let
	$P_k(I;U)$ denote the space of 
	polynomials of degree $k \ge 0$ in time taking values in $U$.
	
	\section{Preliminaries}
	\label{s:preliminaries}
	In this section, we discuss the weak formulation for the continuous Navier--Stokes problem \cref{eq:ns_equations},
	introduce the space-time HDG method that we will use to approximate solutions of  \cref{eq:ns_equations}, and collect a number 
	of useful results for our analysis.
	
	\subsection{The continuous problem}
	Given a suitably chosen body force $f$, kinematic viscosity $\nu \in 
	\mathbb{R}^+$, and initial data $u_0$, we consider the transient Navier--Stokes 
	system posed on a bounded Lipschitz 
	domain
	$\Omega \subset \mathbb{R}^d$, $d \in \cbr{2,3}$:
	\begin{subequations} \label{eq:ns_equations} 
		\begin{align} \label{eq:ns_equations_a}
			\partial_t u - \nu \Delta u + \nabla \cdot( u \otimes u) + \nabla p &= f, 
			&& 
			\hspace{-10mm}
			\text{in } \Omega\times (0,T], \\
			\nabla \cdot u &= 0, && \hspace{-10mm}\text{in } \Omega\times (0,T], \\
			u &=0, && \hspace{-10mm}\text{on }  \partial \Omega\times (0,T], \\
			u(x,0) &= u_0(x), && \hspace{-10mm}\text{in } \Omega. \label{eq:ns_equations_d}
		\end{align}
	\end{subequations}
	To avoid any complications arising from curved boundaries, we will assume 
	further that 
	in two spatial dimensions $\Omega$ is a polygon and in three spatial dimensions $\Omega$ 
	is a polyhedron. As we are interested in weak solutions, 
	we require no assumption that $\Omega$ is convex.

	We begin our discussion of the Navier--Stokes system with the
	theory of weak solutions of Leray--Hopf type
	\cite{Boyer:book,Temam:book}. The starting point is
	$\mathscr{V} = \cbr{ u \in C_c^{\infty}(\Omega)^d \; | \; \nabla \cdot
		u = 0}$, the space of compactly supported solenoidal smooth vector fields.  We define
	two function spaces, $H$ and $V$, as the closures of $\mathscr{V}$ in
	the norm topologies of $L^2(\Omega)$ and $H_0^1(\Omega)$,
	respectively. For an open, bounded Lipschitz set $\Omega$, we have the
	following characterizations of $H$ and $V$ \cite[Theorems I.1.4 and
	I.1.6]{Temam:book}:
	\begin{equation}
		\label{eq:continuous_spaces}
		H = \cbr[1]{u \in L^2(\Omega)^d \; | \; \nabla \cdot u = 0 \text{ and } 
			u \cdot n|_{\partial \Omega} = 0 },
		\
		V = \cbr[1]{u \in H_0^1(\Omega)^d \; | \; \nabla \cdot u = 0}.
	\end{equation}
	Note that
	$H \subset H(\text{div};\Omega):=\cbr{v \in L^2(\Omega)^d\,|\, \nabla
		\cdot u \in L^2(\Omega)}$. We equip $H$ and $V$ with the standard
	norms on $L^2(\Omega)^d$ and $H_0^1(\Omega)^d$, respectively.
	\begin{definition}[Weak solution]\label{def:equiv_weak_form}
		Given a body force $f \in L^2(0,T;H^{-1}(\Omega)^d)$ and an initial condition $u_0 \in H$, a function $u \in 
		L^{\infty}(0,T;H) \cap L^2(0,T;V)$ with $\od{u}{t} \in L^1(0,T;V')$ is
		said to be a weak solution of the Navier--Stokes equations
		\cref{eq:ns_equations} provided it satisfies for all
		$\varphi \in C_c(0,T;V)$, 
		\begin{equation} \label{eq:ns_weak_formulation2}
			\int_0^T \bigg \langle \od{u}{t}, \varphi \bigg\rangle_{V' \times V} 
			\dif t 
			+ 
			\int_0^T((u \cdot \nabla)u , \varphi) \dif t + \nu \int_0^T (\nabla u, 
			\nabla 
			\varphi) \dif t = \int_0^T \langle f, \varphi \rangle_{H^{-1} \times H_0^1} 
			\dif t,
		\end{equation}
		and $u(0) = u_0$ in $V'$ (see e.g. \cite[Section V.1.2.2]{Boyer:book}). 
	\end{definition}
	It is well known that weak solutions in the sense of \Cref{def:equiv_weak_form}
	are weakly continuous from $[0,T]$ into $H$, and their distributional
	time derivative possess
	the further regularity $\od{u}{t} \in L^{4/d}(0,T;V')$ (see e.g. \cite{Boyer:book}).
	If $d=2$, this solution is unique and 
	furthermore  $u \in C(0,T;H)$. Uniqueness in three dimensions remains an 
	open problem.
	\begin{remark}[On the regularity of the body force]
		Our main result (\Cref{thm:convergence}) should remain valid for $f \in L^2(0,T;H^{-1}(\Omega)^d)$
		provided there is an appropriate smoothing operator $E_h: V_h \to H_0^1(\Omega)^d$; see e.g. \cite{Badia:2014}.
		In particular, if $v_h \to v$ strongly in $L^2(\Omega)^d$, we require also that $E_h v_h \to v$ strongly in $L^2(\Omega)^d$ as $h \to 0$.
		For simplicity, we focus on body forces $f \in L^2(0,T;L^2(\Omega)^d)$.
	\end{remark}
	\begin{remark}[The energy inequality] \label{rem:energy_ineq}
		In two dimensions, the weak solution to the Navier--Stokes equations
		satisfies the following \emph{energy equality}: for all $s \in (0,T)$,
		\begin{equation} \label{eq:energy_eq}
			\norm{u(s)}_{L^2(\Omega)}^2 + 2\nu \int_0^s \norm{u}_V^2 \dif t \; = \;	\norm{u_0}_{L^2(\Omega)}^2 + 2 \int_0^s \langle f, u \rangle_{H^{-1} \times H_0^1}  \dif t.
		\end{equation}
		In three dimensions, we say that a weak solution is of \emph{Leray--Hopf}
		type if it satisfies the \emph{energy inequality}: for a.e. $s \in (0,T)$,
		\begin{equation}\label{eq:energy_ineq}
			\norm{u(s)}_{L^2(\Omega)}^2 + 2\nu \int_0^s \norm{u}_V^2 \dif t \; \le \;	\norm{u_0}_{L^2(\Omega)}^2 + 2 \int_0^s \langle f, u \rangle_{H^{-1} \times H_0^1}  \dif t.
		\end{equation}
	\end{remark}

	\subsection{Space-time setting and finite element spaces}
	\label{ss:stdesc}
	In this subsection, we will introduce the space-time slabs, elements,
	faces, and finite element spaces required for the space-time HDG
	discretization. We follow some of the definitions introduced in
	\cite{Pietro:book2}.  We define a simplicial mesh of $\Omega$ to be a
	couple $(\mathcal{T}_h,\mathcal{F}_h)$ where the set of mesh elements
	$\mathcal{T}_h$ is a finite collection of nonempty, disjoint simplices
	$K$ with boundary $\partial K$ and diameter $h_K$ such that
	$\overline{\Omega} = \bigcup_{K \in \mathcal{T}_h} \overline{K}.$ We
	define the \emph{mesh size} $h$ of $\mathcal{T}_h$ to be
	$h = \max_{K \in \mathcal{T}_h} h_K.$
	
	The set of mesh faces $\mathcal{F}_h$ is a finite collection of
	nonempty, disjoint subsets of $\overline{\Omega}$ such that, for any
	$F \in \mathcal{F}_h$, $F$ is a non-empty, connected subset of a
	hyperplane in $\mathbb{R}^d$.  We assume further that for each
	$F \in \mathcal{F}_h$, either there exist distinct mesh elements
	$K_1, K_2 \in \mathcal{T}_h$ such that
	$F = \partial K_1 \cap \partial K_2$, in which case we call $F$ an
	interior face, or there exists one mesh element $K \in \mathcal{T}_h$
	such that $F = \partial K \cap \partial \Omega$ and we call $F$ a
	boundary face. Moreover, we assume that the set of mesh faces forms a
	partition of the mesh skeleton; that is,
	$ \partial \mathcal{T}_h = \bigcup_{K \in \mathcal{T}_h} \partial K =
	\bigcup_{F \in \mathcal{F}_h} F$. We collect interior faces
	in the set $\mathcal{F}_h^i$ and boundary faces in the set
	$\mathcal{F}_h^b$. Note that
	$\mathcal{F}_h = \mathcal{F}_h^i \cup \mathcal{F}_h^b$.
	
	Let $k_s \ge 1$ be a fixed integer. We introduce a pair of
	discontinuous finite element spaces on $\mathcal{T}_h$:
	\begin{subequations} \label{eq:fe_space_element}
		\begin{align}
			V_h &:= \cbr[1]{v_h \in L^2(\Omega)^d \; | \; v_h \in \label{eq:vel_space_element}
				P_{k_s}(K)^{d} \; \forall K \in \mathcal{T}_h}, \\
			Q_h &:= \cbr[1]{q_h \in L_0^2(\Omega) \; | \; q_h \in 
				P_{k_s-1}(K) \; \forall K \in \mathcal{T}_h}, \label{eq:pres_space_element}
		\end{align} 
	\end{subequations}
	and on $\partial \mathcal{T}_h$, we introduce a pair of discontinuous facet finite element 
	spaces:
	\begin{equation*}
		\begin{split}
			\bar{V}_h &:= \cbr[1]{\bar v_h \in L^2(\partial \mathcal{T}_h) 
				\; | \; 
				\bar v_h\in 
				P_{k_s}(F)^{d} \; \forall F \in \mathcal{F}_h, \; \bar{v}_h|_{\partial \Omega} = 0 }, \\
			\bar{Q}_h &:= \cbr[1]{\bar q_h \in L^2(\partial \mathcal{T}_h) \; | \; \bar 
				q_h \in 
				P_{k_s}(F) \; \forall F \in \mathcal{F}_h}.
		\end{split}
	\end{equation*}

	Next, we partition the time interval $(0,T)$ into a series of $N+1$
	time-levels $0 = t_0 < t_1 < \dots < t_N = T$ of length
	$\Delta t_n = t_{n+1} - t_n$, and we define
	$\tau = \max_{0 \le n \le N-1} \Delta t_n$. We assume this time
	partition is quasi-uniform, i.e. there exists a $C_{U'}>0$ such that
	$\tau \le C_{U'} \Delta t_n$ for all $n=0,\dots,N-1$.  A space-time
	slab is then defined as $\mathcal{E}^n = \Omega \times I_n$, with
	$I_n = (t_n,t_{n+1})$.  We tessellate the space-time slab
	$\mathcal{E}^n$ with space-time prisms $\mathscr{K} = K\times I_n$.
	We denote this tessellation by $\mathfrak{T}_h^n$ and note that
	$\mathcal{E}^n = \bigcup_{\mathscr{K} \in \mathfrak{T}_h^n}
	\mathscr{K}$.  Combining each space-time slab $n=0,\dots,N-1$, we
	obtain a tessellation of the space-time domain
	$\mathfrak{T}_h = \bigcup_{n = 0}^{N-1} \mathfrak{T}_h^n$.
	
	We next discuss space-time faces. Consider a single space-time slab
	$\mathcal{E}^n$. We define a space-time face
	$\mathscr{F} = F\times I_n$ to be an interior face if
	$F \in \mathcal{F}_{h}^i$, and a boundary face if
	$F \in \mathcal{F}_{h}^b$.  We collect the set of space-time
	interior faces in the set $\mathfrak{F}_{h,n}^i$ and the set of
	space-time boundary faces in the set $\mathfrak{F}_{h,n}^b$. We
	collect all space-time faces in the space-time slab $\mathcal{E}^n$ in
	the set $\mathfrak{F}_{h,n}$. Combining each space-time slab
	$n=0,\dots, N-1$, we obtain the set of all space-time interior faces
	$\mathfrak{F}_{h}^i = \bigcup_{n=0}^{N-1} \mathfrak{F}^i_{h,n}$, the
	set of all space-time boundary faces
	$\mathfrak{F}_{h}^b = \bigcup_{n=0}^{N-1} \mathfrak{F}^b_{h,n}$, and
	the set of all space-time faces
	$\mathfrak{F}_{h} = \mathfrak{F}_h^i \cup \mathfrak{F}_h^b$. We assume
	that the set of mesh faces forms a partition of the space-time mesh
	skeleton; that is,
	$\partial \mathfrak{T}_h = \bigcup_{n=0}^{N-1} \bigcup_{K \in
		\mathcal{T}_h} \partial K \times I_n = \bigcup_{\mathscr{F} \in
		\mathfrak{F}_h} \mathscr{F}$.
	
	We define a space-time prismatic mesh of $\Omega \times (0,T)$ to be
	the couple $(\mathfrak{T}_h,\mathfrak{F}_h)$. We will say that a
	space-time prismatic mesh $(\mathfrak{T}_h,\mathfrak{F}_h)$ is
	\emph{shape-regular} if the underlying spatial mesh
	$(\mathcal{T}_h,\mathcal{F}_h)$ is shape-regular; i.e. there exists a
	constant $C_R > 0$ such that $h_K/\rho_K \le C_R$ for all
	$K \in \mathcal{T}_h$, where $\rho_K$ is the radius of the largest
	ball ($d=2$) or sphere ($d=3$) inscribed in $K$. We will say that
	$(\mathfrak{T}_h,\mathfrak{F}_h)$ is \emph{quasi-uniform} if both the
	underlying spatial mesh $(\mathcal{T}_h,\mathcal{F}_h)$ is
	quasi-uniform, i.e. there exists a $C_U>0$ such that $h \le C_U h_K$
	for all $K \in \mathcal{T}_h$ and the time partition is quasi-uniform.
	We will say that $(\mathfrak{T}_h,\mathfrak{F}_h)$ is
	\emph{conforming} if the underlying spatial mesh
	$(\mathcal{T}_h,\mathcal{F}_h)$ is conforming: given two elements
	$K_1, K_2 \in \mathcal{T}_h$, either $K_1 \cap K_2 = \emptyset$ or
	$K_1 \cap K_2$ is a common vertex ($d=2$) or edge ($d=3$), or a common
	face of $K_1$ and $K_2$.  Finally, we will assume that every face $F$
	in the underlying spatial mesh $(\mathcal{T}_h, \mathcal{F}_h)$
	satisfies an \emph{equivalence condition}: that is, given
	$h_F = \text{diam}(F)$, there exist constants $C_e, C^e > 0$ such that
	$C_e h_K \le h_F \le C^e h_K$ for all $K \in \mathcal{T}_h$ and for
	all $F \in \mathcal{F}_h$ where $F \subset \partial K$.
	
	We now complete our definition of the space-time finite element spaces
	associated with the space-time prismatic mesh
	$(\mathfrak{T}_h,\mathfrak{F}_h)$ of $\Omega \times (0,T)$.  Let
	$k_t \ge 0$ be a fixed integer (not necessarily chosen to be equal to
	$k_s$). We consider the following tensor-product space-time finite
	element spaces in which we will seek our approximation on each
	space-time slab $\mathcal{E}^n$:
	\begin{equation*}
		\begin{split}
			\mathcal{V}_h &:= \cbr[1]{v_h \in L^2(0,T;L^2(\Omega)^d) \; | 
				\; v_h|_{\mathcal{E}^n}
				\in P_{k_t}(I_n;V_h)}, \\
			\mathcal{Q}_h &:= \cbr[1]{q_h \in L^2(0,T;L_0^2(\Omega)) \; | \; 
				q_h|_{\mathcal{E}^n}\in P_{k_t}(I_n;Q_h)}, \\
			\bar{\mathcal{V}}_h &:= \cbr[1]{\bar v_h \in 
				L^2(0,T;L^2(\partial \mathcal{T}_h)^d)
				\; | \; 
				\bar v_h|_{\mathcal{E}^n} \in P_{k_t}(I_n;\bar{V}_h)}, \\
			\bar{ \mathcal{Q}}_h &:= \cbr[1]{\bar q_h \in L^2(0,T;L^2(\partial 
				\mathcal{T}_h)) \; 
				| \; 
				\bar q_h|_{\mathcal{E}^n} \in P_{k_t}(I_n;\bar Q_h)}.
		\end{split}
	\end{equation*}
	
	As the spaces $\mathcal{V}_h$ and $\mathcal{Q}_h$ are non-conforming, we make use
	of \emph{broken differential operators}. For $v_h \in \mathcal{V}_h$, we introduce
	the \emph{broken gradient operator} $\nabla_h v_h$ by the restriction
	$(\nabla_h v_h)|_{K} = \nabla (v_h|_{K})$ and the \emph{broken time derivative}
	$\partial_\tau v_h$ by the restriction $(\partial_\tau v_h)|_{I_n} = \partial_t (v_h|_{I_n})$.
	Moreover, the trace of a function $v_h \in \mathcal{V}_h$ may be double-valued
	on space-time interior faces $\mathscr{F} \in \mathfrak{F}_h^i$ as well as across two space-time slabs $\mathcal{E}^n$ and $\mathcal{E}^{n+1}$. For fixed $n$, on an interior face $\mathscr{F} \in \mathfrak{F}_{h,n}^i$ shared by two space-time elements $\mathscr{K}^L$ and $\mathscr{K}^R$, we denote the traces of $v_h \in V_h$ on $\mathscr{F}$
	by $v_{h}^L = \text{ trace of } v_h|_{\mathscr{K}^L} \text{ on } \mathscr{F}$ and $v_{h}^R = \text{ trace of } v_h|_{\mathscr{K}^R} \text{ on } \mathscr{F}$. We denote by $u^{\pm}_n$ the traces at time level 
	$t_n$ from above and below, i.e. $u_n^{\pm} = \lim \limits_{\epsilon \searrow  0} 
	u_h(t_n \pm \epsilon)$.
	
	We introduce the \emph{jump} $\jump{\cdot}$ and  \emph{average} $\av{\cdot}$ of $v_h \in V_h$ across a space-time interior face $\mathscr{F}$ component-wise: let $\jump{v_{h,i}} = v_{h,i}^L - v_{h,i}^R$ and $\av{v_{h,i}} = (v_{h,i}^L + v_{h,i}^R)/2$ with $v_{h,i}$ denoting the $i$th Cartesian component of $v_h$. The quantities $\jump{v_h}$ and $\av{v_h}$ are then the vectors with $i$th Cartesian component $\jump{v_{h,i}}$ and $\av{v_{h,i}}$, respectively. On space-time boundary faces $\mathscr{F} \in \mathfrak{F}_h^b$, we set $\jump{v_h} = \av{v_h} = \text{ trace of } v_h|_{\mathscr{K}} \text{ on } \mathscr{F}$, where $\mathscr{K}$ is the element such that
	$\mathscr{F} \subset \partial \mathscr{K} \cap \sbr[0]{\partial \Omega \times (0,T)}$.
	Lastly, we define the \emph{time jump} of $v_h \in \mathcal{V}_h$ across the space-time slab $\mathcal{E}^n$ by $\sbr{v_h}_{n} = v_n^+ - v_n^-$.
	
	We adopt the following notation for various product spaces of interest
	in this work: $\boldsymbol{V}_h = V_h \times \bar{V}_h$,
	$\boldsymbol{Q}_h = Q_h \times \bar Q_h$,
	$\boldsymbol{\mathcal{V}}_h = \mathcal{V}_h \times
	\bar{\mathcal{V}}_h$, and
	$\boldsymbol{\mathcal{Q}}_h = \mathcal{Q}_h \times
	\bar{\mathcal{Q}}_h$. Pairs in these product spaces will be denoted
	using boldface; for example,
	$\boldsymbol{v}_h := (v_h, \bar{v}_h) \in \boldsymbol{\mathcal{V}}_h$.
	Lastly, we introduce two mesh-dependent norms on the spaces $V_h$ and
	$\boldsymbol{V}_h$, both of which are standard in the study of
	interior penalty methods:
	\begin{subequations}
		\begin{align*}
			\norm{v_h}_{1,h}^2 &:= \sum_{K\in\mathcal{T}_h} \norm{\nabla 
				v_h}_{K}^2 + \sum_{F \in \mathcal{F}_h} \frac{1}{h_F} \norm[0]{\jump{v_h}}_{L^2(F)}^2, &&\forall v_h \in V_h,
			\\
			\tnorm{\boldsymbol{v}}_v^2 &:= \sum_{K\in\mathcal{T}_h} \norm{\nabla 
				v_h}_{K}^2 
			+ \sum_{K\in\mathcal{T}_h}\frac{1}{h_K}\norm{v_h- \bar{v}_h}_{\partial 
				K}^2, &&\forall \boldsymbol{v}_h\in \boldsymbol{V}_h.
		\end{align*}
	\end{subequations}
	Throughout we use the notation $a \lesssim b$ to denote $a \le C b$
	where $C$ is a constant independent of the mesh parameters $h$ and
	$\tau$, the viscosity $\nu$, but possibly dependent on the polynomial
	degrees $k_t$ and $k_s$, the spatial dimension $d$, and the domain
	$\Omega$.
	
	Thanks to the equivalence condition on faces, we have
	\begin{equation} \label{eq:h1normbnd}
		\norm{v_h}_{1,h} \lesssim \tnorm{\boldsymbol{v}_h}_v, \quad \forall \boldsymbol{v}_h \in \boldsymbol{V}_h,
	\end{equation}
	and hence we
	can conclude the following discrete Poincar\'{e} inequality holds
	\cite[Corollary 5.4]{Pietro:book}: for all
	$\boldsymbol{v}_h \in \boldsymbol{V}_h$, 
	\begin{equation} \label{eq:disc_poinc}
	\norm{v_h}_{L^2(\Omega)} \lesssim \tnorm{\boldsymbol{v}_h}_v.
	\end{equation}

	\subsection{The space-time HDG method}
	
	In this subsection, we describe the fully discrete numerical method under consideration
	in this paper. We discretize the Navier--Stokes problem \cref{eq:ns_equations} using the 
	exactly mass conserving space-time HDG method on space-time prismatic meshes
	studied in \cite{Kirk:2021}. This method combines the point-wise
	divergence free and $H(\text{div};\Omega)$-conforming HDG method introduced in 
	\cite{Rhebergen:2018a} and analyzed in \cite{Kirk:2019b} with a discontinuous Galerkin 
	time stepping scheme; see also \cite{Horvath:2019,Horvath:2020} for related
	discretizations on space-time tetrahedral meshes on time-dependent domains.
	
	Due to the use of discontinuous-in-time finite element spaces, the
	discrete space-time HDG formulation can be localized to a single space-time slab; see e.g. \cite[Chapter 12]{Thomee:book}.
	We first consider the discrete formulation on a single space-time slab, and in \Cref{ss:equivalent_scheme} we will
	introduce the equivalent discrete formulation obtained by summing over all space-time slabs to aid us in our analysis.
	For $n=0,\dots, N-1$, the space-time HDG method for the 
	Navier--Stokes problem in each space-time slab $\mathcal{E}^n$ reads: find 
	$(\boldsymbol{u}_h, \boldsymbol{p}_h) \in \boldsymbol{\mathcal{V}}_h \times 
	\boldsymbol{\mathcal{Q}}_h$ such that for all test functions $(\boldsymbol{v}_h, 
	\boldsymbol{q}_h) \in \boldsymbol{\mathcal{V}}_h \times 
	\boldsymbol{\mathcal{Q}}_h$:
	\begin{multline}
		\label{eq:discrete_problem}
		-\int_{I_n} (u_h, \partial_t v_h)_{\mathcal{T}_h} \dif t + 
		(u_{n+1}^-,v_{n+1}^-)_{\mathcal{T}_h} +  
		\int_{I_n} \del{ \nu a_h(\boldsymbol{u}_h, \boldsymbol{v}_h) + o_h(u_h; 
			\boldsymbol{u}_h, \boldsymbol{v}_h) } \dif t
		\\ 
		+\int_{I_n} b_h(\boldsymbol{p}_h, v_h)  \dif 
		t  - \int_{I_n} b_h(\boldsymbol{q}_h, u_h) \dif t = (u_{n}^-,v_{n}^+)_{\mathcal{T}_h} + \int_{I_n} (f, 
		v_h)_{\mathcal{T}_h} 
		\dif t,
	\end{multline}
	where $(u, v)_{\mathcal{T}_h} = \sum_{K \in \mathcal{T}_h} (u, v)_{K}$.
	Once we have solved \cref{eq:discrete_problem} for $u_h$ in the space-time slab $\mathcal{E}^n$, the 
	trace $u_{n+1}^-$ serves as an initial condition when solving \cref{eq:discrete_problem} on 
	the next space-time slab $\mathcal{E}^{n+1}$. The process is initiated by choosing $u_0^{-} = \Pi_h^{\text{div}} u_0$ in the 
	first space-time slab $\mathcal{E}^0$, 
	where $u_0 \in H$ is the prescribed initial condition to the continuous problem \cref{eq:ns_equations}, and $\Pi_h^{\text{div}} : L^2(\Omega) \to V_h^{\text{div}}$ is the $L^2$-projection onto 
	the discretely divergence free subspace $V_h^{\text{div}} \subset V_h$; see \cref{eq:disc_div_free} below and the discussion following.
	
	The discrete
	multilinear forms $a_h(\cdot,\cdot): \boldsymbol{V}_h \times \boldsymbol{V}_h 
	\to \mathbb{R}$, $b_h(\cdot,\cdot): V_h \times \boldsymbol{Q}_h 
	\to \mathbb{R}$, and $o_h(\cdot;\cdot,\cdot): V_h \times \boldsymbol{V}_h 
	\times \boldsymbol{V}_h \to \mathbb{R}$ appearing in \cref{eq:discrete_problem} 
	serve as approximations to the viscous, pressure-velocity coupling, and 
	convection terms, respectively. We define them as in \cite{Kirk:2019b,Rhebergen:2018a}:

	\begin{subequations} \label{eq:forms}
		\begin{align}
			\label{eq:formA}
			a_h(\boldsymbol{u}, \boldsymbol{v}) :=&
			\sum_{K\in\mathcal{T}_h}\int_{K} \nabla u : \nabla v \dif x
			+ \sum_{K\in\mathcal{T}_h}\int_{\partial K}\frac{\alpha  }{h_K}(u - 
			\bar{u})\cdot(v - \bar{v}) \dif s
			\\
			\nonumber
			&- \sum_{K\in\mathcal{T}_h}\int_{\partial 
				K}\sbr{(u-\bar{u})\cdot\partial_n v
				+ \partial_nu\cdot(v-\bar{v})} \dif s,
			\\
			\label{eq:formO}
			o_h(w; \boldsymbol{u}, \boldsymbol{v}) :=&
			-\sum_{K\in\mathcal{T}_h}\int_{K} u\otimes w : \nabla v \dif x
			+\sum_{K\in\mathcal{T}_h}\int_{\partial K}\tfrac{1}{2}w\cdot 
			n\,(u+\bar{u})\cdot(v-\bar{v}) \dif s
			\\
			\nonumber
			&+\sum_{K\in\mathcal{T}_h}\int_{\partial K}\tfrac{1}{2}\envert{w\cdot 
				n}(u-\bar{u})\cdot(v-\bar{v})\dif s,
			\\
			\label{eq:formB}
			b_h(\boldsymbol{p}, v) :=&
			- \sum_{K\in\mathcal{T}_h}\int_{K} p \nabla \cdot v \dif x
			+ \sum_{K\in\mathcal{T}_h}\int_{\partial K}v \cdot n \, \bar{p} \dif s.
		\end{align}
	\end{subequations}
	The parameter $\alpha > 0$ appearing in the bilinear form $a_h(\cdot,\cdot)$ is a penalty parameter typical of interior penalty type 
	discretizations. The bilinear form
	$a_h(\cdot, \cdot)$ is continuous and for sufficiently large $\alpha$
	enjoys discrete coercivity \cite[Lemmas 4.2 and 4.3]{Rhebergen:2017},
	i.e. for all $\boldsymbol{u}_h, \boldsymbol{v}_h \in\boldsymbol{V}_h$,
	\begin{equation}
		\label{eq:ah_coer_bnd}
		\tnorm{\boldsymbol{v}_h}_v^2 \lesssim a_h(\boldsymbol{v}_h, \boldsymbol{v}_h) \quad 
		\text{and} \quad
		\envert{a_h(\boldsymbol{u}_h, \boldsymbol{v}_h)} \lesssim \tnorm{\boldsymbol{u}_h}_{v}\tnorm{\boldsymbol{v}_h}_{v}.
	\end{equation}
	The trilinear form $o_h(\cdot;\cdot,\cdot)$ satisfies \cite[Proposition 3.6]{Cesmelioglu:2017}
	\begin{equation}
		\label{eq:ohequals}
		o_h(w_h; \boldsymbol{v}_h, \boldsymbol{v}_h) = 
		\frac{1}{2}\sum_{K\in\mathcal{T}}\int_{\partial K}\envert{w_h\cdot 
			n}\envert{v_h - \bar{v}_h}^2 \dif s \ge 0, \qquad w_h \in V_h^{\text{div}}, \;
		\forall \boldsymbol{v}_h \in\boldsymbol{V}_h.
	\end{equation}
	%
	%
	The trilinear form also satisfies \cite{Kirk:2021} for all
	$\boldsymbol{u}_h,\boldsymbol{v}_h \in \boldsymbol{V}_h$ and
	$d \in \cbr{2,3}$,
	\begin{equation} \label{eq:o_space_bnd}
		\begin{split}
			|o_h(u_h;\boldsymbol{u}_h,\boldsymbol{v}_h)|  
			\lesssim \norm{u_h}_{L^2(\Omega)}^{1/(d-1)} \tnorm{\boldsymbol{u}_h}_v^{d/2} \tnorm{\boldsymbol{v}_h}_v, \quad \forall \boldsymbol{u}_h,\boldsymbol{v}_h \in \boldsymbol{\mathcal{V}}_h.
		\end{split}
	\end{equation}
	%
	%
	
	Frequent 
	use will
	also be made of functions in the subspace of discretely divergence free 
	velocity 
	fields:
	\begin{equation}  \label{eq:disc_div_free}
		\begin{split}
			V_h^{\text{div}} &:= \cbr{v_h \in V_h \; : \; b_h(v_h,\boldsymbol{q}_h) = 0, 
				\;
				\forall \boldsymbol{q}_h \in \boldsymbol{Q}_h}, \\
			\mathcal{V}_h^{\text{div}} &:= \cbr[0]{v_h \in \mathcal{V}_h \; : \; \int_0^T b_h(v_h,\boldsymbol{q}_h) \dif t = 0, 
				\;
				\forall \boldsymbol{q}_h \in \boldsymbol{\mathcal{Q}}_h}.
		\end{split}
	\end{equation}
	We note that $V_h^{\text{div}} \subset H(\text{div};\Omega)$, and further $\nabla \cdot v_h = 0$ and $v_h \cdot n|_{\partial \Omega} = 0$
	for all $v_h \in V_h^{\text{div}}$ (see e.g. \cite[Proposition 1]{Rhebergen:2018a}). As $\Omega$ is assumed to have a Lipschitz boundary,
	we therefore have  $V_h^{\text{div}} \subset H$. In fact, it can be shown that $V_h^{\text{div}} = V_h \cap H$. 
	
	\subsection{Properties of the space-time HDG scheme}
	
	Here, we collect a number of useful results concerning the solution of
	the space-time HDG scheme \cref{eq:discrete_problem}. The existence of
	solutions to the nonlinear algebraic system arising from
	\cref{eq:discrete_problem} was shown in \cite{Kirk:2021}. It was also
	proven in \cite{Kirk:2021} that the discrete velocity $u_h$ computed
	using the space-time HDG scheme is \emph{conforming} in $L^2(0,T;H)$,
	i.e., if $u_h \in \mathcal{V}_h$ is the element velocity solution
	of \cref{eq:discrete_problem}, then $\nabla \cdot u_h = 0$,
	$u_h|_{\mathcal{E}^n} \in P_k(I_n;H(\text{div};\Omega))$, and the
	normal trace of $u_h$ vanishes on the spatial boundary $\partial \Omega$.
	
	Next, we recall an energy estimate that will allow us to conclude
	that the discrete velocity pair
	$\boldsymbol{u}_h \in \boldsymbol{\mathcal{V}}_h$ computed using
	\cref{eq:discrete_problem} is bounded uniformly with respect to the
	mesh parameters $h$ and $\tau$:
	\begin{lemma} \label{lem:energy_stability}
		Let $d \in \cbr{2,3}$, $k_s \ge 1$ and $k_t \ge 0$, and suppose that $\boldsymbol{u}_h \in 
		\boldsymbol{\mathcal{V}}_h$ is solution of space-time HDG scheme 
		\cref{eq:discrete_problem} for $n=0,\dots,N-1$. For all $0 \le m \le N-1$,
		\begin{equation} \label{eq:energy_1}
			\norm[1]{u_{m+1}^-}_{L^2(\Omega)}^2 
			+ \sum_{n=0}^{m} \norm[1]{ 
				\sbr{u_h}_n}_{L^2(\Omega)}^2 +
			\nu \int_{0}^{t_{m+1}}\tnorm{\boldsymbol{u}_h}_v^2 \dif t  \le C(f, u_0, \nu).
		\end{equation}
		Furthermore, if $k_t \ge 0$ when $d=2$ and $k_t \in \cbr{0,1}$ when $d=3$,
		it holds that
		\begin{equation} \label{eq:energy_2}
			\norm{u_h}_{L^{\infty}(0,T;L^2(\Omega)^d)} \le C(f, u_0, \nu).
		\end{equation}
		Here, $C(f, u_0, \nu)$ denotes a constant that depends on the data $f$,
		$u_0$, and $\nu$.
	\end{lemma}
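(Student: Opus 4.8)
The plan is to derive \eqref{eq:energy_1} by testing the scheme \eqref{eq:discrete_problem} against the discrete solution itself, and then to bootstrap the resulting interface bounds to the pointwise-in-time estimate \eqref{eq:energy_2}.

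First I would take $\boldsymbol{v}_h = \boldsymbol{u}_h$ and $\boldsymbol{q}_h = \boldsymbol{p}_h$ in \eqref{eq:discrete_problem}. With this choice the two pressure contributions $b_h(\boldsymbol{p}_h,u_h)$ cancel, so the pressure drops out. The time terms $-\int_{I_n}(u_h,\partial_t u_h)_{\mathcal{T}_h}\dif t + (u_{n+1}^-,u_{n+1}^-)_{\mathcal{T}_h} - (u_n^-,u_n^+)_{\mathcal{T}_h}$ form the standard discontinuous-in-time combination: using $\int_{I_n}(u_h,\partial_t u_h)_{\mathcal{T}_h}\dif t = \tfrac12(\norm{u_{n+1}^-}_{L^2(\Omega)}^2 - \norm{u_n^+}_{L^2(\Omega)}^2)$ and completing the square produces $\tfrac12\norm{u_{n+1}^-}_{L^2(\Omega)}^2 - \tfrac12\norm{u_n^-}_{L^2(\Omega)}^2 + \tfrac12\norm{\sbr{u_h}_n}_{L^2(\Omega)}^2$. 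Summing over $n=0,\dots,m$ telescopes the $\norm{u_n^-}_{L^2(\Omega)}^2$ terms and leaves $\norm{u_0^-}_{L^2(\Omega)}^2 = \norm{\Pi_h^{\text{div}}u_0}_{L^2(\Omega)}^2 \le \norm{u_0}_{L^2(\Omega)}^2$ by $L^2$-stability of the projection. For the remaining spatial terms I would use the coercivity half of \eqref{eq:ah_coer_bnd} to bound $\nu\int a_h(\boldsymbol{u}_h,\boldsymbol{u}_h)$ below by $\nu\int\tnorm{\boldsymbol{u}_h}_v^2$, and discard the convection term via its nonnegativity \eqref{eq:ohequals}, which applies because $u_h(t)\in V_h^{\text{div}}$ for a.e. $t$ (testing \eqref{eq:discrete_problem} against $\boldsymbol{q}_h=\psi(t)\bar{\boldsymbol q}$ for all $\psi\in P_{k_t}(I_n)$ forces $b_h(u_h(t),\bar{\boldsymbol q})=0$ pointwise). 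The forcing is handled by Cauchy--Schwarz, the discrete Poincar\'e inequality \eqref{eq:disc_poinc}, and Young's inequality, absorbing a fraction of $\nu\int\tnorm{\boldsymbol{u}_h}_v^2$ on the left and leaving $\nu^{-1}\norm{f}_{L^2(0,T;L^2(\Omega))}^2$ on the right. This gives \eqref{eq:energy_1} with $C(f,u_0,\nu)\sim\norm{u_0}_{L^2(\Omega)}^2 + \nu^{-1}\norm{f}_{L^2(0,T;L^2(\Omega))}^2$, and it holds for every $k_t\ge0$ and $d\in\{2,3\}$ since the convection term entered only through its sign.

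For \eqref{eq:energy_2} I would first record what \eqref{eq:energy_1} already controls: it bounds $\norm{u_{m+1}^-}_{L^2(\Omega)}$ uniformly, and since $u_n^+ = u_n^- + \sbr{u_h}_n$ the controlled jump sum also bounds $\norm{u_n^+}_{L^2(\Omega)}$ uniformly. The remaining issue is purely interior to each slab. When $d=3$ and $k_t\in\{0,1\}$, the map $t\mapsto u_h(t)$ is affine on $I_n$, so $t\mapsto\norm{u_h(t)}_{L^2(\Omega)}^2$ is a convex (at most quadratic) polynomial and attains its maximum on $\overline{I_n}$ at an endpoint; hence $\norm{u_h}_{L^{\infty}(I_n;L^2(\Omega))} \le \max\del{\norm{u_n^+}_{L^2(\Omega)},\norm{u_{n+1}^-}_{L^2(\Omega)}}$ and \eqref{eq:energy_2} follows immediately. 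This endpoint-maximum principle is exactly what breaks for $k_t\ge2$, which explains the degree restriction in three dimensions.

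The main obstacle is the case $d=2$ with arbitrary $k_t$, where convexity is unavailable and $u_h$ at interior times must be estimated genuinely from the equation. Here I would integrate the time term in \eqref{eq:discrete_problem} by parts to obtain the per-slab strong form $\int_{I_n}(\partial_t u_h,v_h)_{\mathcal{T}_h}\dif t + (\sbr{u_h}_n,v_n^+)_{\mathcal{T}_h} + \int_{I_n}(\nu a_h(\boldsymbol{u}_h,\boldsymbol{v}_h) + o_h(u_h;\boldsymbol{u}_h,\boldsymbol{v}_h))\dif t = \int_{I_n}(f,v_h)_{\mathcal{T}_h}\dif t$, and test it against a suitable time-weighted multiple of $u_h(\theta)$ (a reproducing-kernel weight in $P_{k_t}(I_n)$) to extract $\norm{u_h(\theta)}_{L^2(\Omega)}^2$ for arbitrary $\theta\in I_n$. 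The nonlinear term is the crux: controlling it relies on the two-dimensional form of \eqref{eq:o_space_bnd}, namely $\envert{o_h(u_h;\boldsymbol{u}_h,\boldsymbol{v}_h)} \lesssim \norm{u_h}_{L^2(\Omega)}\tnorm{\boldsymbol{u}_h}_v\tnorm{\boldsymbol{v}_h}_v$, which is \emph{linear} in $\norm{u_h}_{L^2(\Omega)}$, so that after Young's inequality it yields a contribution proportional to $\norm{u_h}_{L^2(\Omega)}^2$ weighted by the integrable quantity $\tnorm{\boldsymbol{u}_h}_v^2$; a discrete Gr\"onwall argument across slabs, seeded by \eqref{eq:energy_1}, then closes the bound. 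This manipulation fails for $d=3$ because the exponent $\tnorm{\boldsymbol{u}_h}_v^{3/2}$ in \eqref{eq:o_space_bnd} no longer balances against the available $L^2(0,T)$ energy bound, which is the analytic reason the unrestricted-$k_t$ result is confined to two dimensions.
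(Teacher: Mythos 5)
Your proof of \cref{eq:energy_1} and your handling of the low-order cases of \cref{eq:energy_2} are correct. Note that the paper itself does not prove this lemma inline: both bounds are deferred to \cite{Kirk:2021}, and the only argument the paper records is that for $k_t \in \cbr{0,1}$ (in either dimension) \cref{eq:energy_2} follows from \cref{eq:energy_1} via the bound $\norm{u_h}_{L^{\infty}(0,T;L^2(\Omega)^d)} \le \max_{m} \norm[0]{u_{m+1}^-}_{L^2(\Omega)} + \max_{m} \norm[0]{\sbr{u_h}_m}_{L^2(\Omega)}$ of \cite{Walkington:2005}. Your testing argument for \cref{eq:energy_1} (pressure cancellation, the jump identity from completing the square, coercivity of $a_h$ for $\alpha$ large, positivity of $o_h$ justified by pointwise-in-time membership of $u_h(t)$ in $V_h^{\text{div}}$, and Young's inequality on the forcing) is the standard proof, and your convexity/endpoint argument for affine-in-time polynomials is exactly the quoted Walkington bound.

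The genuine gap is in the case $d=2$, $k_t \ge 2$ --- the one case where \cref{eq:energy_2} cannot be read off from endpoint values, and hence the case that carries the real content. Your device fails as stated. If $K_\theta \in P_{k_t}(I_n)$ denotes the $L^2(I_n)$ reproducing kernel of $P_{k_t}(I_n)$ and you test with $v_h = K_\theta(t)\,u_h(\theta)$, then, since $t \mapsto (\partial_t u_h(t), u_h(\theta))_{\mathcal{T}_h}$ is a polynomial of degree $k_t - 1 \le k_t$, the reproducing property returns $(\partial_t u_h(\theta), u_h(\theta))_{\mathcal{T}_h}$, i.e.\ one half the derivative of $t \mapsto \norm{u_h(t)}_{L^2(\Omega)}^2$ evaluated at $t=\theta$, \emph{not} $\norm{u_h(\theta)}_{L^2(\Omega)}^2$; a pointwise derivative carries no sign information and cannot seed the estimate. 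Moreover $K_\theta(\theta) = \norm{K_\theta}_{L^2(I_n)}^2 \gtrsim \tau^{-1}$, so the jump pairing $K_\theta(t_n^+)(\sbr{u_h}_n, u_h(\theta))_{\mathcal{T}_h}$ and the forcing pairing, which equals $((\Pi^t f)(\theta), u_h(\theta))_{\mathcal{T}_h}$, pick up factors of order $\tau^{-1}$ and $\tau^{-1/2}$ respectively, destroying uniformity in the mesh parameters. The device that works --- and the one used in the reference the paper cites, following \cite{Chrysafinos:2010} --- is the \emph{discrete characteristic function}: the function $u_\theta \in P_{k_t}(I_n;V_h)$ determined by $u_\theta(t_n^+) = u_h(t_n^+)$ and $\int_{I_n}(u_\theta, v_h)_{\mathcal{T}_h} \dif t = \int_{t_n}^{\theta}(u_h, v_h)_{\mathcal{T}_h} \dif t$ for all $v_h \in P_{k_t-1}(I_n;V_h)$, whose $\tau$-uniform stability makes the DG time terms produce $\tfrac12 \norm{u_h(\theta)}_{L^2(\Omega)}^2$ up to controlled quantities. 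Be aware also that with any such test function (which is no longer $u_h$ itself) the viscous term does not vanish by coercivity; it must be estimated, and this requires balancing the $\tau^{1/2}$ from Cauchy--Schwarz in time against an inverse inequality of the form $\tnorm{\boldsymbol{u}_h(\theta)}_v \lesssim \tau^{-1/2} \del{\int_{I_n} \tnorm{\boldsymbol{u}_h}_v^2 \dif t}^{1/2}$. Once these corrections are made, the remainder of your plan --- the $d=2$ form of \cref{eq:o_space_bnd}, linear in $\norm{u_h}_{L^2(\Omega)}$, followed by Young's inequality and absorption (or Gr\"onwall) seeded by \cref{eq:energy_1} --- is sound, and your diagnosis of why the same bookkeeping fails for $d=3$ with $k_t \ge 2$ is correct.
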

	The bounds in \Cref{lem:energy_stability} were proven in \cite{Kirk:2021} under the
	assumption that $k_t = k_s \ge 1$ for simplicity of presentation; we
	remark that the proofs are equally valid for the general case
	$k_s \ge 1$ and $k_t \ge 0$. Note that for the lower order schemes $k_t \in \cbr{0,1}$,
	\cref{eq:energy_2} follows directly from the \cref{eq:energy_1}. This can be seen immediately when considering
	constant polynomials in time ($k_t = 0$).  For linear polynomials in
	time ($k_t = 1$), this follows from the bound (see \cite[Section
	3]{Walkington:2005}):
	$\norm{u_h}_{L^{\infty}(0,T;L^2(\Omega)^d)} \le \max_{0 \le m \le N-1}
	\norm[0]{u_{m+1}^-}_{L^2(\Omega)} + \max_{0 \le m \le N-1}
	\norm[0]{\sbr{u_h}_m}_{L^2(\Omega)}$.

	\section{Lifting operators and discrete differential operators} \label{s:disc_operators}
	In this section, we introduce two discrete differential operators that serve as natural 
	approximations to the distributional
	gradient and distributional time derivative in the space-time HDG setting.
	These discrete operators enjoy convergence to their continuous counterparts 
	in the weak topologies of appropriate Bochner spaces. 
	
	\subsection{Discrete gradient}
	\label{ss:dgo}
	First, we introduce a discrete gradient operator that will serve as an
	approximation of the distributional gradient operator following ideas in \cite{Buffa:2009,DiPietro:2010, Kikuchi:2012}. 
	%
	The basic building 
	block of the discrete gradient operator is the following observation \cite{Buffa:2009}: as functions $v_h \in V_h$ are discontinuous, their distributional gradient
	has a contribution from the jumps of $u_h$ across element interfaces. 
	Therefore, an appropriate approximation of the distributional gradient
	in the HDG setting must incorporate the contribution from the jumps
	between the element solution and the facet solution across element
	boundaries.  We do so by constructing an HDG lifting operator
	following ideas in \cite{Kikuchi:2012,Oikawa:2010}.  For this, we need
	to introduce the scalar broken polynomial spaces
	$W_h := \cbr[1]{w_h \in L^2(\Omega) \; | \; w_h|_{K} \in P_{k_s}(K)}$
	and
	$\bar{W}_h := \cbr[1]{w_h \in L^2(\partial \mathcal{T}_h) \; | \;
		w_h|_{\partial K} \in P_{k_s}(\partial K)}$.
	We first define a \emph{local lifting} $R_h^{\partial K} : L^2(\partial K) \to P_{k_s}(K)^d$ 
	satisfying 
	\begin{equation}
		\int_{K} R_h^{\partial K}(\mu) \cdot w_h \dif x = \int_{\partial K} \mu w_h 
		\cdot n \dif s, \quad \forall w_h \in P_{k_s}(K)^d.
	\end{equation}
	We then define the \emph{global lifting} $R^{k_s}_h: L^2(\partial \mathcal{T}_h) \to V_h$  
	by the restriction
	$R_h^{k_s}(\mu)|_{K} = R_h^{\partial K}(\mu|_{\partial K})$ for all $K \in 
	\mathcal{T}_h$. Note that $R^{k_s}_h$ satisfies for all  
	$w_h 
	\in V_h$,
	\begin{equation} \label{eq:global_spat_lift}
		\sum_{K \in \mathcal{T}_h} \int_K R_h^{k_s}(\mu) \cdot w_h \dif x = \sum_{K \in 
			\mathcal{T}_h} \int_{\partial K} \mu w_h \cdot n \dif s, 
	\end{equation}
	and it can be shown using the Cauchy--Schwarz inequality
	and a standard local discrete trace inequality that 
	\begin{equation} \label{eq:bnd_on_lift}
		\norm[0]{R_h^{k_s}(w_{h} - 
			\bar{w}_h)}_{L^2(\Omega)}^2 
		\lesssim \sum_{K \in \mathcal{T}_h} \frac{1}{h_K} \norm{w_h - 
			\bar{w}_{h}}_{L^2(\partial K)}^2, \quad
		\forall \boldsymbol{w}_h \in W_h \times \bar{W}_h.
	\end{equation}
	%
	%
	%
	
	Using the global HDG lifting, we introduce the
	discrete gradient operator $G_h^{k_s}: 
	W_h \times \bar{W}_h
	\to V_h $ in the same spirit as in 
	\cite{DiPietro:2010, Kikuchi:2012}: given $(v,\bar{v}) \in W_h \times \bar{W}_h$,
	we set
	\begin{equation} \label{eq:disc_grad_op}
		G_h^{k_s}(v,\bar{v}) = \nabla_h v - R_h^{k_s}(v - \bar{v}),
	\end{equation}
	where $\nabla_h$ is the broken  gradient operator. Crucially, this operator satisfies
	for all $\boldsymbol{v}_h \in \boldsymbol{V}_h$ and $w_h \in V_h$ the identity
	\begin{equation*}
		\int_{\Omega} G_h^{k_s}(\boldsymbol{v}_{h,i}) \cdot w_h \dif x = \int_{\Omega} \nabla_h v_{h,i} \cdot w_h \dif x - \sum_{K \in \mathcal{T}_h} \int_{\partial K}(v _{h,i}- \bar{v}_{h,i}) w_h\cdot n \dif s,
	\end{equation*}
	where $v_{h,i}$ and $\bar{v}_{h,i}$ denote the $i$th Cartesian components
	of $v_h$ and $\bar{v}_h$, respectively.

	\subsection{Discrete time derivative}
	\label{ss:dtd}
	To define a discrete time derivative operator that serves as an appropriate
	approximation of the distributional time derivative, we proceed by analogy with the discrete gradient
	constructed in the previous section. 
	We follow \cite{Schotzau:2010} by introducing a \emph{local time lifting operator} $R_{loc,n}^{k_t}: V_h \to P_{k_t}(I_n;V_h)$
	satisfying
	\begin{align}
		\int_{I_n} (R_{loc,n}^{k_t}(u_h),v_h)_{\mathcal{T}_h} \dif t
		&= 
		([u_h]_n,v_n^+)_{\mathcal{T}_h}, \quad \forall v_h \in \; P_{k_t}(I_n;V_h),
		\\
		\label{eq:time_lift_representation}
		R^{k_t}_{loc,n}(u_h)
		&= \frac{\del{u_n^+ - u_n^-}}{2} \sum_{m=0}^{k_t} (-1)^m(2m+1) L_m^n(t),
	\end{align}
	where the latter representation formula follows from \cite[Lemma
	6]{Schotzau:2010}. Here $L_m^n(t)$ are \emph{mapped Legendre
		polynomials}; see \cite[Section 3]{Schotzau:2010}.  We then define a
	\emph{global time lifting} $R^{k_t}: V_h \to \mathcal{V}_h$ by the
	restriction $R^{k_t}|_{I_n} = R^{k_t}_{loc,n}$. This lifting
	satisfies:
	\begin{equation}
		\begin{split} \label{eq:global_time_lift}
			\int_0^T (R^{k_t}(u_h),v_h)_{\mathcal{T}_h} \dif t &= \sum_{n=0}^{N-1} 
			([u_h]_n,v_n^+)_{\mathcal{T}_h}, \quad \forall v_h \in \mathcal{V}_h.
		\end{split}
	\end{equation}
	With the global time lifting in hand, we define the discrete time
	derivative $\mathcal{D}^{k_t}_t: \mathcal{V}_h \to \mathcal{V}_h$ of
	$v_h \in \mathcal{V}_h$ by setting
	\begin{equation} \label{eq:discrete_time_der}
		\mathcal{D}^{k_t}_t (v_h) = \partial_\tau v_h + R^{k_t}(v_h).
	\end{equation}
	
	\begin{lemma} \label{lem:disc_time_der_in_H}
		Suppose that $u_h \in \mathcal{V}_h^{\text{div}}$. Then, it 
		holds 
		that $\mathcal{D}^{k_t}_t (u_h)|_{\mathcal{E}^n}\in P_{k_t}(I_n;H)$ for all $0 \le n \le N-1$.
	\end{lemma}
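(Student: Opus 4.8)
The plan is to reduce the statement to the single recorded inclusion $V_h^{\text{div}} \subset H$, after first upgrading the membership $u_h \in \mathcal{V}_h^{\text{div}}$ to the \emph{pointwise-in-time} statement $u_h|_{\mathcal{E}^n} \in P_{k_t}(I_n;V_h^{\text{div}})$. I would then treat the two summands of $\mathcal{D}^{k_t}_t(u_h) = \partial_\tau u_h + R^{k_t}(u_h)$ separately, show that each restricts on $\mathcal{E}^n$ to an element of $P_{k_t}(I_n;V_h^{\text{div}})$, and conclude by linearity.

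First I would pass from the time-integrated divergence constraint in \cref{eq:disc_div_free} to a pointwise one. The crucial feature is that the form $b_h$ carries no time derivatives, so for fixed $\boldsymbol{q}_h \in \boldsymbol{Q}_h$ the map $t \mapsto b_h(u_h(t),\boldsymbol{q}_h)$ is a polynomial of degree at most $k_t$ on each slab. Since $\boldsymbol{\mathcal{Q}}_h$ is discontinuous across slabs and is the full tensor product $P_{k_t}(I_n;\boldsymbol{Q}_h)$, I would test $\int_0^T b_h(u_h,\boldsymbol{q}_h)\,dt = 0$ against $\boldsymbol{q}_h = \psi(t)\boldsymbol{q}$ with $\psi \in P_{k_t}(I_n)$ supported on a single slab and $\boldsymbol{q} \in \boldsymbol{Q}_h$ arbitrary. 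This yields $L^2(I_n)$-orthogonality of the degree-$k_t$ polynomial $t \mapsto b_h(u_h(t),\boldsymbol{q})$ to all of $P_{k_t}(I_n)$; choosing $\psi$ equal to that polynomial forces it to vanish identically, whence $b_h(u_h(t),\boldsymbol{q}) = 0$ for every $t \in I_n$ and every $\boldsymbol{q} \in \boldsymbol{Q}_h$. Thus $u_h(t) \in V_h^{\text{div}}$ for all $t$, i.e. $u_h|_{\mathcal{E}^n} \in P_{k_t}(I_n;V_h^{\text{div}})$, and since $V_h^{\text{div}} \subset H$ this already lies in $P_{k_t}(I_n;H)$.

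With this in hand the two contributions to $\mathcal{D}^{k_t}_t(u_h)$ in \cref{eq:discrete_time_der} are straightforward. The broken time derivative $\partial_\tau u_h|_{I_n}$ is the ordinary time derivative of a polynomial valued in the fixed finite-dimensional space $V_h^{\text{div}}$, hence lies in $P_{k_t-1}(I_n;V_h^{\text{div}}) \subset P_{k_t}(I_n;V_h^{\text{div}})$ (the zero space when $k_t=0$). For the lifting I would invoke the representation formula \cref{eq:time_lift_representation}, which exhibits $R^{k_t}_{loc,n}(u_h)$ as the time-independent element $(u_n^+ - u_n^-)/2$ multiplied by the scalar time polynomial $\sum_{m=0}^{k_t}(-1)^m(2m+1)L_m^n(t) \in P_{k_t}(I_n)$. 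The trace from above $u_n^+$ is the value at $t_n$ of the $P_{k_t}(I_n;V_h^{\text{div}})$-function $u_h|_{\mathcal{E}^n}$, so $u_n^+ \in V_h^{\text{div}}$; likewise $u_n^- \in V_h^{\text{div}}$ for $n \ge 1$ by applying the previous step on the slab $\mathcal{E}^{n-1}$, while $u_0^- = \Pi_h^{\text{div}} u_0 \in V_h^{\text{div}}$. As $V_h^{\text{div}}$ is a linear space, $(u_n^+ - u_n^-)/2 \in V_h^{\text{div}}$ and hence $R^{k_t}_{loc,n}(u_h) \in P_{k_t}(I_n;V_h^{\text{div}})$. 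Adding the two contributions and using that $P_{k_t}(I_n;V_h^{\text{div}})$ is closed under addition gives $\mathcal{D}^{k_t}_t(u_h)|_{\mathcal{E}^n} \in P_{k_t}(I_n;V_h^{\text{div}}) \subset P_{k_t}(I_n;H)$.

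I expect the only genuinely substantive step to be the first: passing from the time-integrated condition defining $\mathcal{V}_h^{\text{div}}$ to the pointwise membership $u_h(t) \in V_h^{\text{div}}$. Everything downstream is linear bookkeeping together with the inclusion $V_h^{\text{div}} \subset H$. The localization hinges on $b_h$ being purely spatial and on the tensor-product, slab-wise-discontinuous structure of $\boldsymbol{\mathcal{Q}}_h$; were the test space smaller or coupled across time levels, only a weaker integrated statement would survive and the traces $u_n^\pm$ could fail to be discretely divergence free.
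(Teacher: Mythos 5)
Your proposal is correct, and it reaches the conclusion by a somewhat different route than the paper. Both arguments share the same skeleton: split $\mathcal{D}^{k_t}_t(u_h) = \partial_\tau u_h + R^{k_t}(u_h)$ and use the representation formula \cref{eq:time_lift_representation} to handle the lifting term. The difference lies in how membership in $H$ is established. The paper verifies the defining properties of $H$ one at a time: divergence-freedom and $H(\text{div};\Omega)$-conformity of $\mathcal{D}^{k_t}_t(u_h)$ follow because the broken time derivative commutes with the spatial divergence, and the vanishing normal trace on $\partial\Omega$ is checked separately for $\partial_\tau u_h$ (by expanding $u_h$ in a temporal basis with coefficients in $V_h$ having zero normal trace) and for $R^{k_t}_{loc,n}(u_h)$ (via the traces $u_n^\pm$). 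In doing so, the paper takes for granted that $u_h$ enjoys these properties pointwise in time, a fact it attributes to \cite{Kirk:2021} for solutions of the scheme. You instead prove this localization from first principles: testing the integrated constraint defining $\mathcal{V}_h^{\text{div}}$ with slab-supported tensor-product pressures $\psi(t)\boldsymbol{q}$, and noting that $t \mapsto b_h(u_h(t),\boldsymbol{q})$ is itself a polynomial of degree at most $k_t$, forces that polynomial to vanish identically, giving $u_h|_{\mathcal{E}^n} \in P_{k_t}(I_n;V_h^{\text{div}})$. From there your argument is purely linear-algebraic: both summands of $\mathcal{D}^{k_t}_t(u_h)$ take values in the linear space $V_h^{\text{div}}$ (with the convention $u_0^- = \Pi_h^{\text{div}} u_0 \in V_h^{\text{div}}$, which the paper's proof also needs implicitly for $n=0$), and the single recorded inclusion $V_h^{\text{div}} \subset H$ finishes the proof. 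What your route buys is self-containedness and generality — it covers arbitrary $u_h \in \mathcal{V}_h^{\text{div}}$ rather than only scheme solutions, and it fills in the integrated-to-pointwise step the paper leaves to a citation; what the paper's route buys is transparency about which structural facts (commutation with the divergence, preservation of normal traces by the lifting) drive the result.
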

	\begin{proof}
		That $\mathcal{D}^{k_t}_t(u_h)$ is divergence free and
		$H(\text{div};\Omega)$-conforming follows from the fact that the
		broken time derivative commutes with the divergence operator and the
		representation formula \cref{eq:time_lift_representation}. It
		remains to show that
		$\mathcal{D}_t^{k_t}(u_h) \cdot n|_{\partial \Omega}$ = 0.  Note
		that $u_h \cdot n |_{\partial \Omega} = 0$ implies
		$(\partial_\tau u_h )\cdot n|_{\partial \Omega} = 0$. This can be
		seen by considering a single space-time slab $\mathcal{E}^n$ and
		expanding $u_h$ in terms of a basis $\cbr{ \psi_i}_{i=0}^{m}$ of
		$P_{k_t}(I_n)$ to find
		$(\partial_\tau u_h)|_{I_n} = \sum_{i=0}^{k_t} \partial_t \psi_i
		u_i$, where $u_i \in V_h$ is such that
		$u_i \cdot n |_{\partial \Omega} = 0$. Lastly, since
		$u_n^+ \cdot n|_{\partial \Omega} = u_n^- \cdot n|_{\partial \Omega}
		= 0$, the representation formula \cref{eq:time_lift_representation}
		shows that indeed
		$R^{k_t}_{loc,n}(u_h) \cdot n|_{\partial \Omega} = 0$.
	\end{proof}
	
	\subsection{Rewriting the HDG scheme}
	\label{ss:equivalent_scheme}
	
	We now recast the space-time HDG scheme into a form more amenable to
	the convergence analysis in \Cref{s:convergence} using the discrete
	differential operators introduced above.  In what follows,
	$(u_{h,i})_{1\le i \le d}$, $(\bar u_{h,i})_{1\le i \le d}$,
	$(v_{h,i})_{1\le i \le d}$ and $(\bar{v}_{h,i})_{1\le i \le d}$ will
	denote the Cartesian components of $u_h$, $\bar{u}_h$, $v_h$ and
	$\bar{v}_h$, respectively. We will adopt the convention of summation
	over repeated indices. Restricting our attention to test functions
	$\boldsymbol{v}_h \in \mathcal{V}_h^{\text{div}} \times
	\bar{\mathcal{V}}_h$ in \cref{eq:discrete_problem} to remove the
	contribution from the pressure-velocity coupling term, integrating by
	parts in time, summing over all space-time slabs $\mathcal{E}^n$, and
	using the definitions of the lifting operators, we arrive at the
	problem:
	find
	$\boldsymbol{u}_h \in \mathcal{V}_h^{\text{div}} \times \bar{\mathcal{V}}_h$ satisfying for all
	$\boldsymbol{v}_h \in \mathcal{V}_h^{\text{div}} \times \bar{\mathcal{V}}_h$,
	\begin{equation} \label{eq:scheme_rewritten}
		\begin{split}
			\int_{0}^T &( \mathcal{D}_t^{k_t} (u_h),v_h)_{\mathcal{T}_h} \dif t  +
			\int_{0}^T \del{ \nu a_h(\boldsymbol{u}_h, \boldsymbol{v}_h) + o_h(u_h; 
				\boldsymbol{u}_h, \boldsymbol{v}_h) } \dif t =   \int_{0}^T (f, 
			v_h)_{\mathcal{T}_h} 
			\dif t,
		\end{split}
	\end{equation}
	where
	%
	\begin{align} 
		a_h(\boldsymbol{u}_h, \boldsymbol{v}_h)  \label{eq:viscous_rewritten}
		=&  \int_{\Omega} G_h^{k_s}(\boldsymbol{u}_{h,i}) 
		\cdot 
		G_h^{k_s}(\boldsymbol{v}_{h,i}) \dif x 
		- \int_{\Omega}  R_h^{k_s}(u_{h,i}-\bar{u}_{h,i}) 
		\cdot  R_h^{k_s}(v_{h,i}-\bar{v}_{h,i})  \dif x 
		\\ &+ \notag
		\sum_{K\in\mathcal{T}_h} \int_{\partial K}\frac{\alpha  
		}{h_K}(u_{h,i} - \bar{u}_{h,i})(v_{h,i}-\bar{v}_{h,i})\dif s, \\
		o_h(u_h; \boldsymbol{u}_h, \boldsymbol{v}_h) \dif t =& 
		\int_{\Omega} u_h \cdot 
		G_h^{2k_s}(\boldsymbol{u}_{h,i}) 
		v_{h,i} 	\label{eq:convection_rewritten}
		\dif x \\ \notag
		&+\sum_{K\in\mathcal{T}_h} \int_{\partial 
			K}\tfrac{1}{2}\del{u_h 
			\cdot n + 
			\envert{u_h\cdot 
				n}}(u_h-\bar{u}_h)\cdot(v_{h}-\bar{v}_{h})\dif s.
	\end{align}
	\section{Uniform bounds on the discrete differential
		operators} \label{s:bnds_compact} In this section, we derive uniform
	bounds on the discrete differential operators of the discrete velocity
	solution introduced in the previous section.  In what follows, we
	suppose that
	$\boldsymbol{u}_h \in \mathcal{V}_h^{\text{div}} \times
	\bar{\mathcal{V}}_h$ is a discrete velocity pair solving the
	space-time HDG formulation \cref{eq:discrete_problem} on a given
	space-time prismatic mesh $\del{\mathfrak{T}_h, \mathfrak{F}_h}$ for
	$n=0,\dots,N-1$. 
	We then show that subsequences of the discrete
	derivatives converge weakly to their continuous counterparts.
	
	\subsection{Bounding the discrete gradient}
	\label{ss:grad_bnd}
	Before bounding the discrete gradient of $u_h$, we pause to mention an
	immediate consequence of the energy bound \Cref{lem:energy_stability}.
	From the discrete Sobolev embeddings for broken polynomial spaces
	\cite[Theorem 6.1]{DiPietro:2010}, we can infer using
	\cref{eq:h1normbnd} that
	\begin{equation} \label{eq:lq_bnd_uh}
		\int_0^T \norm{u_h}_{L^q(\Omega)}^2 \dif t  \le C(f, u_0, \nu),
	\end{equation}
	where $1 \le q < \infty$ if $d=2$ and $1 \le q \le 6$ if $d=3$.
	Thus, $\del{u_h}_{h \in \mathcal{H}}$ is bounded in $L^2(0,T;L^q(\Omega)^d)$ 
	for $1 \le q \le 6$ and in particular in $L^2(0,T;H)$. 
	
	\begin{theorem}\label{thm:bnd_disc_grad}
		Let $\del{\mathfrak{T}_h, \mathfrak{F}_h}$ be a conforming and shape-regular space-time prismatic mesh.
		Let $d \in \cbr{2,3}$ and suppose $k_t \ge 0$ if $d=2$ and $k_t \in \cbr{0,1}$ if $d=3$. Let $\boldsymbol{u}_h$ be the
		solution of the space-time HDG scheme \cref{eq:discrete_problem}.
		Then, provided the penalty parameter $\alpha > 0$ is chosen sufficiently large, it holds that
		\begin{equation}
			\int_{0}^T 
			\norm[0]{G_h^k(\boldsymbol{u}_{h,i})}_{L^2(\Omega)}^2 
			\dif t \le C(f,u_0,\nu).
		\end{equation}
	\end{theorem}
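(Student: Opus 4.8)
The plan is to control the discrete gradient at each fixed time by the mesh-dependent energy norm $\tnorm{\boldsymbol{u}_h}_v$, and then to conclude, after integration in time, by invoking the energy estimate \cref{eq:energy_1}.

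First I would start from the definition \cref{eq:disc_grad_op} of the discrete gradient, $G_h^{k_s}(u_{h,i}, \bar u_{h,i}) = \nabla_h u_{h,i} - R_h^{k_s}(u_{h,i} - \bar u_{h,i})$, and use $(a+b)^2 \le 2a^2 + 2b^2$ together with the summation convention over the repeated index $i$ to obtain, for a.e. $t \in (0,T)$,
\begin{equation*}
	\norm[0]{G_h^{k_s}(\boldsymbol{u}_{h,i})}_{L^2(\Omega)}^2 \lesssim \norm[0]{\nabla_h u_{h,i}}_{L^2(\Omega)}^2 + \norm[0]{R_h^{k_s}(u_{h,i} - \bar u_{h,i})}_{L^2(\Omega)}^2.
\end{equation*}
The first term on the right is exactly $\sum_{K \in \mathcal{T}_h} \norm{\nabla u_h}_K^2$. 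For the second, I would apply the lifting bound \cref{eq:bnd_on_lift} componentwise and sum over $i$, which produces $\sum_{K \in \mathcal{T}_h} h_K^{-1} \norm{u_h - \bar u_h}_{\partial K}^2$. Since these two contributions are precisely the two terms defining $\tnorm{\boldsymbol{u}_h}_v^2$, this yields the pointwise-in-time estimate $\norm[0]{G_h^{k_s}(\boldsymbol{u}_{h,i})}_{L^2(\Omega)}^2 \lesssim \tnorm{\boldsymbol{u}_h}_v^2$.

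Integrating over $(0,T)$ and appealing to \cref{eq:energy_1} with $m = N-1$ (so that $t_{m+1} = T$), which bounds $\nu \int_0^T \tnorm{\boldsymbol{u}_h}_v^2 \dif t$ by $C(f, u_0, \nu)$, then gives the claim, with the factor $\nu^{-1}$ absorbed into the constant.

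I do not expect a serious obstacle here: the result follows directly from the definition of $G_h^{k_s}$, the a priori lifting estimate \cref{eq:bnd_on_lift}, and the already-established energy stability. The only points requiring attention are, first, the hypothesis on $\alpha$, which enters solely through the discrete coercivity \cref{eq:ah_coer_bnd} underlying \cref{eq:energy_1} and must therefore hold for the penalty parameter chosen sufficiently large; and second, that only the estimate \cref{eq:energy_1} — valid for all $k_t \ge 0$ — is actually used here, so the restriction $k_t \in \cbr{0,1}$ when $d=3$ is inherited from the standing hypotheses rather than being essential to this particular bound.
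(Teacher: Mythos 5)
Your proof is correct, but it takes a genuinely different route from the paper's. You bound the discrete gradient directly from its definition \cref{eq:disc_grad_op}: Young's inequality plus the lifting bound \cref{eq:bnd_on_lift} give the pointwise-in-time estimate $\sum_{i}\norm[0]{G_h^{k_s}(\boldsymbol{u}_{h,i})}_{L^2(\Omega)}^2 \lesssim \tnorm{\boldsymbol{u}_h}_v^2$, after which integration in time and \cref{eq:energy_1} finish the job; the bilinear form $a_h$ never appears. The paper instead works with the rewritten viscous form \cref{eq:viscous_rewritten} and shows that, for $\alpha$ sufficiently large, the negative lifting contribution is absorbed by the penalty term, yielding the constant-one lower bound \cref{eq:lower_bnd_a_h}, namely $a_h(\boldsymbol{u}_h,\boldsymbol{u}_h) \ge \sum_{i=1}^d \norm[0]{G_h^{k_s}(\boldsymbol{u}_{h,i})}_{L^2(\Omega)}^2$, which is then integrated against the energy estimate. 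Your version is more elementary, and your two side remarks are both sound: $\alpha$ enters your argument only through the coercivity \cref{eq:ah_coer_bnd} underlying \Cref{lem:energy_stability}, and only \cref{eq:energy_1} (valid for all $k_t \ge 0$) is used, so the restriction $k_t \in \cbr{0,1}$ for $d=3$ is indeed inessential to this bound---observations that apply equally to the paper's proof. What the paper's detour through $a_h$ buys is the inequality \cref{eq:lower_bnd_a_h} itself, with constant exactly one and no hidden multiplicative factor: it is reused later in the proof of the energy inequality, where testing with $\boldsymbol{u}_h$ and applying \cref{eq:lower_bnd_a_h} produces the left-hand side of \cref{eq:energy_ineq_inter1} with the sharp factor $2\nu$, which weak lower semicontinuity then converts into the Leray--Hopf energy inequality. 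Your bound, carrying a factor of $2$ from Young's inequality and the constant from \cref{eq:bnd_on_lift}, proves the present theorem just as well but could not be substituted in that later argument.
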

	
	\begin{proof}
		The result follows from \cref{eq:viscous_rewritten} and the energy bound in \Cref{lem:energy_stability}, provided $\alpha >0$ is chosen sufficiently large, since
		for all $\boldsymbol{u}_h \in \boldsymbol{V}_h$ we have by \cref{eq:bnd_on_lift} for $i=1,\dots,d$ that
		\begin{multline*}
			-  \norm[1]{R_h^{k_s}(u_{h,i}-\bar{u}_{h,i})}_{L^2(\Omega)}^2 + 
			\sum_{K\in\mathcal{T}_h} \frac{\alpha  
			}{h_K} \norm{u_{h,i} - \bar{u}_{h,i}}_{L^2(\partial K)}^2
			\\\ge \del{\alpha - C} \sum_{K\in\mathcal{T}_h} \frac{1 
			}{h_K} \norm{u_{h,i} - \bar{u}_{h,i}}_{L^2(\partial K)}^2,  
		\end{multline*}
		and therefore, 
		\begin{equation} \label{eq:lower_bnd_a_h}
			\begin{split}
				a_h(\boldsymbol{u}_h, \boldsymbol{u}_h) \ge \sum_{i=1}^d \norm[0]{G_h^{k_s}(\boldsymbol{u}_{h,i})}_{L^2(\Omega)}^2.
			\end{split}
		\end{equation}
		Consequently, the sequence $G_h^k(\boldsymbol{u}_{h,i})$ is 
		bounded in $L^2(0,T;L^2(\Omega)^d)$. 
	\end{proof}
	
	\subsection{Bounding the discrete time derivative}
	We now turn our focus to bounding the discrete time derivative of
	$u_h \in \mathcal{V}_h^{\text{div}}$ uniformly, first in the dual
	space of $\mathcal{V}_h^{\text{div}} \times \bar{\mathcal{V}}_h$, and
	second in $L^{4/d}(0,T;V')$. The former is required to obtain a
	\emph{strong compactness} result needed for passage to the limit as
	$h\to 0$ in the nonlinear convection term, and the second is essential
	to ensure the distributional time derivatives of accumulation points
	of the sequence $\cbr{u_h}_{h \in \mathcal{H}}$ are sufficiently
	regular to satisfy \Cref{def:equiv_weak_form}.  That
	$\mathcal{D}_t^{k_t}(u_h)$ can be identified with an element of
	$L^{4/d}(0,T;V')$ follows from \Cref{lem:disc_time_der_in_H} since
	$V \subset H$ with continuous and dense embedding
	(see e.g. \cite[Chapter 5.2]{Brezis:book}).
	\subsubsection{Uniform bound in the dual space of $\mathcal{V}_h^{\text{div}} \times \bar{\mathcal{V}}_h$}
	To apply the compactness theorem \Cref{thm:compactness} later on to prove \Cref{thm:convergence_of_subsequence},
	we will require $F_h: \boldsymbol{v}_h \mapsto (\mathcal{D}_t^{k_t}(u_h),v_h)_{\mathcal{T}_h}$ to be uniformly bounded $L^{4/d}(0,T;(V_h^{\text{div}} \times \bar{V}_h)')$,
	with $(V_h^{\text{div}} \times \bar{V}_h)'$ the dual space of $V_h^{\text{div}} \times \bar{V}_h$. We shall see that it suffices to
	bound $F_h(\boldsymbol{v}_h)$ in the dual space of the fully discrete space $\mathcal{V}_{h}^{\text{div}} \times \bar{\mathcal{V}}_h$,
	which we equip with the norm
	\begin{equation*}
		\norm{F_h}_{(\mathcal{V}_h^{\text{div}} \times \bar{\mathcal{V}}_h)'} =
		\sup_{0 \ne \boldsymbol{v}_h \in \mathcal{V}_h^{\text{div}} \times \bar{\mathcal{V}}_h}   \frac{\big|\int_0^T F_h(\boldsymbol{v}_h) \dif t\big|}{ \del{ \int_0^T \tnorm{\boldsymbol{v}_h}^{4/(4-d)}_v \dif t}^{(4-d)/4}}.
	\end{equation*}
	This motivates the following result (where we choose $F_h: \boldsymbol{v}_h \mapsto ( \mathcal{D}_t^{k_t}(u_h), v_h)_{\mathcal{T}_h}$):
	\begin{lemma} \label{lem:disc_time_der_bnd_disc_dual}
		Let $\del{\mathfrak{T}_h, \mathfrak{F}_h}$ be a conforming and shape-regular space-time prismatic mesh.
		Let $d \in \cbr{2,3}$ and suppose $k_t \ge 0$ if $d=2$ and $k_t \in \cbr{0,1}$ if $d=3$. Let $\boldsymbol{u}_h$ be the discrete velocity pair arising from the
		solution of the space-time HDG scheme \cref{eq:discrete_problem}.
		It holds for all $\boldsymbol{v}_h \in \mathcal{V}_h^{\text{div}} \times \bar{\mathcal{V}}_h$ that
		\begin{equation*}
			\begin{split}
				\bigg| \int_0^T ( \mathcal{D}_t^{k_t}(u_h), v_h)_{\mathcal{T}_h} \dif t \bigg| \le C(u_0,f,\nu,T)\del[3]{ \int_0^T \tnorm{\boldsymbol{v}_h}^{4/(4-d)}_v \dif t}^{(4-d)/4}.
			\end{split}
		\end{equation*}
	\end{lemma}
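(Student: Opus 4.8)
Let me understand what we need to prove. We want to bound $\left|\int_0^T (\mathcal{D}_t^{k_t}(u_h), v_h)_{\mathcal{T}_h}\,dt\right|$ by a constant times $\left(\int_0^T \tnorm{\boldsymbol{v}_h}^{4/(4-d)}_v\,dt\right)^{(4-d)/4}$.

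The natural approach is to use the scheme itself. From the rewritten scheme (eq:scheme_rewritten), we have:
$$\int_0^T (\mathcal{D}_t^{k_t}(u_h), v_h)_{\mathcal{T}_h}\,dt = \int_0^T (f, v_h)_{\mathcal{T}_h}\,dt - \nu\int_0^T a_h(\boldsymbol{u}_h, \boldsymbol{v}_h)\,dt - \int_0^T o_h(u_h;\boldsymbol{u}_h,\boldsymbol{v}_h)\,dt.$$

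So we need to bound each of the three terms on the right.

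**Bounding each term.**

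For the forcing term $\int_0^T (f,v_h)_{\mathcal{T}_h}\,dt$: Use Cauchy-Schwarz and the discrete Poincaré inequality (eq:disc_poinc). We get $|\int_0^T(f,v_h)\,dt| \lesssim \int_0^T \|f\|_{L^2(\Omega)}\|v_h\|_{L^2(\Omega)}\,dt \lesssim \int_0^T \|f\|_{L^2(\Omega)}\tnorm{\boldsymbol{v}_h}_v\,dt$. Then apply Hölder in time. Since $f \in L^2(0,T;L^2(\Omega)^d)$, and we want the $L^{4/(4-d)}$ norm of $\tnorm{\boldsymbol{v}_h}_v$, we need the conjugate exponent $4/d$ on $\|f\|$. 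Since $4/d \le 2$ for $d \in \{2,3\}$, we have $\|f\|_{L^{4/d}(0,T;L^2)} \lesssim \|f\|_{L^2(0,T;L^2)}$ (finite time interval).

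For the viscous term: Use the continuity bound (eq:ah_coer_bnd), $|a_h(\boldsymbol{u}_h,\boldsymbol{v}_h)| \lesssim \tnorm{\boldsymbol{u}_h}_v \tnorm{\boldsymbol{v}_h}_v$. Then $\nu\int_0^T |a_h|\,dt \lesssim \nu\int_0^T \tnorm{\boldsymbol{u}_h}_v\tnorm{\boldsymbol{v}_h}_v\,dt$. Apply Hölder in time with exponents $4/d$ and $4/(4-d)$. This requires bounding $\tnorm{\boldsymbol{u}_h}_v$ in $L^{4/d}(0,T)$, which follows from the energy estimate (eq:energy_1): we have $\tnorm{\boldsymbol{u}_h}_v \in L^2(0,T)$, and since $4/d \le 2$, this controls the $L^{4/d}$ norm.

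For the convection term: This is the hard part. Use the space-time bound (eq:o_space_bnd):
$$|o_h(u_h;\boldsymbol{u}_h,\boldsymbol{v}_h)| \lesssim \|u_h\|_{L^2(\Omega)}^{1/(d-1)}\tnorm{\boldsymbol{u}_h}_v^{d/2}\tnorm{\boldsymbol{v}_h}_v.$$

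Integrating and applying Hölder with exponents $4/d$ and $4/(4-d)$ to separate $\tnorm{\boldsymbol{v}_h}_v$:
$$\int_0^T |o_h|\,dt \lesssim \left(\int_0^T \|u_h\|_{L^2(\Omega)}^{\frac{4}{d(d-1)}}\tnorm{\boldsymbol{u}_h}_v^{2}\,dt\right)^{d/4}\left(\int_0^T \tnorm{\boldsymbol{v}_h}_v^{4/(4-d)}\,dt\right)^{(4-d)/4}.$$

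**The main obstacle: controlling the convection integral.**

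The delicate point is verifying that $\int_0^T \|u_h\|_{L^2(\Omega)}^{4/(d(d-1))}\tnorm{\boldsymbol{u}_h}_v^2\,dt$ is bounded uniformly. Here I use the $L^\infty$ bound (eq:energy_2): $\|u_h\|_{L^\infty(0,T;L^2(\Omega))} \le C(f,u_0,\nu)$. This pulls the $\|u_h\|_{L^2(\Omega)}$ factor out as a constant, leaving $C^{4/(d(d-1))}\int_0^T \tnorm{\boldsymbol{u}_h}_v^2\,dt$, which is bounded by (eq:energy_1). The hypothesis $k_t \in \{0,1\}$ when $d=3$ is precisely what guarantees (eq:energy_2) via Lemma~\ref{lem:energy_stability}. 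I should verify the exponent arithmetic: for $d=2$, $4/(d(d-1)) = 2$ and for $d=3$, $4/(d(d-1)) = 2/3$, both finite and handled by the $L^\infty$ bound. Summing the three bounds, factoring out the common $\left(\int_0^T\tnorm{\boldsymbol{v}_h}_v^{4/(4-d)}\,dt\right)^{(4-d)/4}$ factor, and absorbing all data-dependent constants into $C(u_0,f,\nu,T)$ completes the proof.
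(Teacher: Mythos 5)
Your proposal is correct and follows essentially the same route as the paper's proof: the same decomposition via \cref{eq:scheme_rewritten}, the same bounds for the forcing term (Cauchy--Schwarz, discrete Poincar\'e, H\"older in time), the viscous term (continuity of $a_h$, H\"older with exponents $4/d$ and $4/(4-d)$, then the energy estimate via $L^2(0,T)\hookrightarrow L^{4/d}(0,T)$), and the convection term via \cref{eq:o_space_bnd}. Your handling of the convection term---two-factor H\"older followed by pulling out the $\norm{u_h}_{L^2(\Omega)}$ factor using \cref{eq:energy_2}---is just a rearrangement of the paper's generalized H\"older inequality with exponents $(\infty, 4/d, 4/(4-d))$, and your exponent arithmetic and the observation that $k_t\in\cbr{0,1}$ for $d=3$ is needed precisely for \cref{eq:energy_2} are both accurate.
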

	\begin{proof}
		Let
		$\boldsymbol{v}_h \in \mathcal{V}_h^{\text{div}} \times
		\bar{\mathcal{V}}_h$, and use \cref{eq:scheme_rewritten} to write
		\begin{equation} \label{eq:bnd_time_der_inter}
			\begin{split}
				\int_{0}^T (\mathcal{D}_t^{k_t} u_h 
				, v_h)_{\mathcal{T}_h} 
				\dif t = \int_{0}^{T} \del{(f, 
				v_h)_{\mathcal{T}_h}- \nu 
				a_h(\boldsymbol{u}_h, 
				\boldsymbol{v}_h) - o_h(u_h; 
				\boldsymbol{u}_h, \boldsymbol{v}_h)}
				\dif t.
			\end{split}
		\end{equation}
		We now bound each of the three terms on the right hand side of 
		\cref{eq:bnd_time_der_inter}, beginning with the first term
		on the right hand side. The Cauchy-Schwarz 
		inequality, H\"{o}lder's inequality, and the discrete Poincar\'{e} inequality \cref{eq:disc_poinc} yield
		\begin{equation} \label{eq:f_bnd}
			\begin{split}
				\int_{0}^{T} |(f, 
				v_h)_{\mathcal{T}_h}|\dif t 
				& \le C(f,T) \del[3]{ \int_0^T\tnorm{\boldsymbol{v}_h}^{4/(4-d)}_v \dif t}^{(4-d)/4}.
			\end{split}
		\end{equation}
		To bound the linear viscous term on the right hand side of \cref{eq:bnd_time_der_inter},
		we begin by using the boundedness of $a_h(\cdot,\cdot)$ \cref{eq:ah_coer_bnd}
		and H\"{o}lder's inequality with $p = 4/d$ and $q = 4/(4-d)$ to find
		\begin{equation} \label{eq:ah_bnd_inter}
			\begin{split}
				\int_{0}^T |a_h(\boldsymbol{u}_h, 
				\boldsymbol{v}_h) |\dif t &\le C \del[3]{ \int_{0}^T 
					\tnorm{\boldsymbol{u}_h}_v^{4/d} \dif t
				}^{d/4} \del[3]{ \int_{0}^T 
					\tnorm{\boldsymbol{v}_h}_v^{4/(4-d)} \dif t
				}^{(4-d)/4}.
			\end{split}
		\end{equation}
		If $d=2$, directly using the uniform bound in \Cref{lem:energy_stability}, and if $d=3$, applying H\"{o}lder's inequality to the first integral on the right hand side of \cref {eq:ah_bnd_inter} with $p=3$ and $q=3/2$,
		followed by the uniform bound in \Cref{lem:energy_stability}, we find
		\begin{equation} \label{eq:ah_bnd_3d}
			\begin{split}
				\int_{0}^T |a_h(\boldsymbol{u}_h, 
				\boldsymbol{v}_h) |\dif t 
				&\le C(f,u_0,\nu,T)\del[3]{ \int_{0}^T 
					\tnorm{\boldsymbol{v}_h}_v^{4/(4-d)} \dif t
				}^{(4-d)/4}.
			\end{split}
		\end{equation}
		Lastly, we must bound the nonlinear convection term on
		the right hand side of \cref{eq:bnd_time_der_inter}.
		For this, we use the bound \cref{eq:o_space_bnd},
		apply the generalized H\"{o}lder's inequality with $p = \infty$, $q = 4/d$, and $r= 4/(4-d)$,
		and use \Cref{lem:energy_stability}, to find
		\begin{equation} \label{eq:oh_bnd}
			\int_{0}^T |o_h(u_h;\boldsymbol{u}_h,\boldsymbol{v}_h)| \dif t \le C(f,u_0,\nu)\del{ \int_{0}^T 
				\tnorm{\boldsymbol{v}_h}_v^{4/(4-d)} \dif t  }^{(4-d)/4}.
		\end{equation}
		Collecting \cref{eq:f_bnd},  \cref{eq:ah_bnd_3d}, and \cref{eq:oh_bnd} yields the result.
	\end{proof}
	
	\subsubsection{Construction of suitable test functions}
	
	To prove a uniform bound on the discrete time derivative in
	$L^{4/d}(0,T; V')$ (see \Cref{thm:disc_time_der_bnd}), we will need to
	construct a suitable set of test functions in the discrete space
	$\mathcal{V}_h^{\text{div}} \times \bar{\mathcal{V}}_h$.  This will
	require two preparatory results. The first is a density result for
	functions of tensor-product type in $C_c(0,T;V)$ taken from
	\cite[Lemma V.1.2]{Boyer:book} with minor modification:
	\begin{lemma} \label{lem:tensor_product_functions}
		The set $\mathscr{M}$ of functions $\varphi$ of the form 
		\begin{equation} \label{eq:tensor_product_functions}
			\varphi(t,x) = \sum_{k=1}^{M} \eta_k(t) \psi_k(x),
		\end{equation}
		where $M \ge 1$ is an integer, $\eta_k \in C^{\infty}_c(0,T)$, and $\psi_k \in \mathscr{V}$, is dense in $C_c(0,T;V)$.
	\end{lemma}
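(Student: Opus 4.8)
The plan is to reduce the statement to two independent one-dimensional approximations---one in the time variable and one in the spatial variable---glued together by a smooth partition of unity in time. I would first fix $\varphi \in C_c(0,T;V)$ and let $[a,b] \subset (0,T)$ be an interval containing $\operatorname{supp}\varphi$. Since $\varphi$ is continuous with compact support, it is uniformly continuous as a map from $(0,T)$ into $V$, so for any $\epsilon > 0$ there is a $\delta > 0$ with $\norm{\varphi(t) - \varphi(s)}_V < \epsilon$ whenever $|t - s| < \delta$. This uniform continuity is the key quantitative input for the temporal approximation.

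For the temporal step, choose $[a',b'] \subset (0,T)$ with $[a,b] \subset (a',b')$ and a partition $a' = s_0 < s_1 < \dots < s_m = b'$ of mesh size less than $\delta$, together with a smooth partition of unity $\cbr{\eta_k}_{k=0}^m \subset C_c^{\infty}(0,T)$ subordinate to the cover $\cbr{(s_{k-1}, s_{k+1})}$ and satisfying $\sum_k \eta_k \equiv 1$ on $[a,b]$. Setting $w_k := \varphi(s_k) \in V$, I would estimate, for every $t$ in the support of $\varphi$,
\[
\norm[3]{\varphi(t) - \sum_{k} \eta_k(t) w_k}_V = \norm[3]{\sum_k \eta_k(t)\del{\varphi(t) - \varphi(s_k)}}_V \le \sum_k \eta_k(t)\, \norm{\varphi(t) - \varphi(s_k)}_V,
\]
and since each nonzero $\eta_k(t)$ forces $|t - s_k| < \delta$ while $\sum_k \eta_k(t) \le 1$, the right-hand side is at most $\epsilon$. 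Off $\operatorname{supp}\varphi$ the same sum is controlled because the $\eta_k$ supported near the ends of $[a',b']$ multiply values $w_k$ of small norm; placing partition points $s_k$ in the region where $\varphi$ already vanishes makes this contribution vanish outright, which is the cleanest way to handle the boundary bookkeeping.

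For the spatial step, each coefficient $w_k \in V$ is approximated by an element $\psi_k \in \mathscr{V}$: this is immediate from the very definition of $V$ as the closure of $\mathscr{V}$ in the $H_0^1(\Omega)$-topology, which coincides with the $V$-norm. Choosing $\psi_k$ with $\norm{w_k - \psi_k}_V$ small relative to $(m+1)\max_k \norm{\eta_k}_{L^{\infty}(0,T)}$, the function $\sum_k \eta_k(t)\psi_k$ lies in $\mathscr{M}$ and differs from $\sum_k \eta_k(t) w_k$ by at most $\epsilon$ uniformly in $t$. Combining the two estimates via the triangle inequality produces a member of $\mathscr{M}$ within $2\epsilon$ of $\varphi$ in the norm of $C_c(0,T;V)$, which proves density. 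I expect the only genuine obstacle to be the bookkeeping at the ends of the time support---ensuring the partition of unity both reproduces $\varphi$ on $\operatorname{supp}\varphi$ and keeps the approximant compactly supported inside the open interval $(0,T)$---since the two substantive ingredients (uniform continuity in time and the definitional density of $\mathscr{V}$ in $V$) are each routine.
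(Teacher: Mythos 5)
Your proof is correct. The paper itself supplies no argument for this lemma---it is quoted from \cite[Lemma V.1.2]{Boyer:book} with minor modification---and your construction (uniform continuity of $\varphi$ in time, a smooth partition of unity $\{\eta_k\}$ with mesh finer than the modulus of continuity, followed by approximating each coefficient $\varphi(s_k)\in V$ by some $\psi_k\in\mathscr{V}$ using the definition of $V$ as the $H_0^1$-closure of $\mathscr{V}$) is precisely the standard proof underlying that citation, with the off-support region handled rigorously by your first mechanism (the values $\varphi(s_k)$ appearing there have norm below $\epsilon$ by uniform continuity, so the dispensable remark about relocating partition points is not needed).
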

	
	Denote by
	$\Pi_n^t: L^2(I_n) \to P_{k_t}(I_n)$,
	$\Pi_h^{\text{div}}: L^2(\Omega) \to V_h^{\text{div}}$, and
	$\bar{\Pi}_h: H^1(\Omega)^d \to \bar{V}_h$ the orthogonal
	$L^2$-projections onto the discrete spaces $P_{k_t}(I_n)$,
	$V_h^{\text{div}}$, and $\bar{V}_h$, respectively. We define the
	\emph{global} $L^2$-projection $\Pi^t$ in time by the restriction
	$\Pi^t|_{I_n} = \Pi^t_n$.  Given a function $\varphi \in \mathscr{M}$,
	consider for all
	$n=0,\dots,N-1$,
	\begin{equation} \label{eq:test_func_for_time_der_bnd}
		\Pi \varphi|_{\mathcal{E}^n} = \sum_{k=1}^M \Pi_n^t \eta_k(t) \Pi_h^{\text{div}} \psi_k(x)
		\quad \text{ and } 
		\quad
		\bar{\Pi} \varphi |_{\mathcal{E}^n} = \sum_{k=1}^M \Pi_n^t \eta_k(t) \bar{\Pi}_h \psi_k(x).
	\end{equation}
	By construction, $(\Pi v, \bar{\Pi} v) \in \mathcal{V}_h^{\text{div}} \times \bar{\mathcal{V}}_h$. 
	We remark that the approximation properties of $\Pi_h^{\text{div}}$
	obtained in \cite{Kirk:2021} and listed in \Cref{lem:approximation_div} require quasi-uniformity of the underlying
	spatial mesh $\mathcal{T}_h$, which we assume henceforth wherever
	necessary.

	\begin{proposition} \label{prop:proj_bnd}
		Let $\del{\mathfrak{T}_h, \mathfrak{F}_h}$ be a conforming and quasi-uniform space-time prismatic mesh,
		and let $d \in \cbr{2,3}$.
		Let $\varphi \in \mathscr{M}$ and let $(\Pi \varphi, \bar{\Pi} \varphi) \in \mathcal{V}_h^{\text{div}} \times \bar{\mathcal{V}}_h$ 
		be the discrete test functions constructed in \cref{eq:test_func_for_time_der_bnd}.
		Then, the following stability property holds:
		\begin{align}
			\label{eq:proj_bnd}
			\int_{0}^T \tnorm{(\Pi \varphi, \bar{\Pi} \varphi)}_v^{4/(4-d)} \dif t &\lesssim \int_{0}^T \norm{\varphi}_{V}^{4/(4-d)}\dif t, \quad \forall v \in \mathscr{M}.
		\end{align}
	\end{proposition}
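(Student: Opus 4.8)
The strategy is to exploit the tensor-product structure of the discrete test functions and to decouple the estimate into a purely spatial stability bound and a purely temporal one. First, by linearity of the spatial projections and since the spatial factors $\psi_k$ in \cref{eq:tensor_product_functions} are independent of time, the definitions in \cref{eq:test_func_for_time_der_bnd} can be recast, for a.e.\ $t \in I_n$, as $(\Pi\varphi)(t,\cdot) = \Pi_h^{\text{div}}\del{(\Pi^t\varphi)(t,\cdot)}$ and $(\bar{\Pi}\varphi)(t,\cdot) = \bar{\Pi}_h\del{(\Pi^t\varphi)(t,\cdot)}$, where $\Pi^t$ is the global $L^2$-in-time projection. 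Writing $w = (\Pi^t\varphi)(t,\cdot)$, which lies in $\mathrm{span}\cbr{\psi_1,\dots,\psi_M} \subset V$, it then suffices to establish two facts: (a) a pointwise-in-time spatial stability estimate $\tnorm{(\Pi_h^{\text{div}}w, \bar{\Pi}_h w)}_v \lesssim \norm{w}_{H^1(\Omega)}$ with a constant independent of $w$; and (b) the $L^p$-in-time stability $\int_0^T \norm{\Pi^t\varphi}_V^p \dif t \lesssim \int_0^T \norm{\varphi}_V^p \dif t$ for $p = 4/(4-d) \in \cbr{2,4}$. Raising (a) to the power $p$ and integrating in time, then applying (b), yields the result. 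I emphasize that (a) must hold with a constant independent of $w$; invoking norm-equivalence on the finite-dimensional space $\mathrm{span}\cbr{\psi_k}$ is not permissible, as it would introduce a $\varphi$-dependent constant incompatible with the $\lesssim$ convention.

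Step (a) is the heart of the argument. Splitting the triple norm, I would bound the broken-gradient contribution $\sum_{K}\norm{\nabla \Pi_h^{\text{div}}w}_K^2$ by establishing $H^1$-stability of the $L^2$-projection $\Pi_h^{\text{div}}$: introducing a stable interpolant $I_h w \in V_h^{\text{div}}$, the local inverse inequality gives $\norm{\nabla_h(\Pi_h^{\text{div}}w - I_h w)}_K \lesssim h_K^{-1}\norm{\Pi_h^{\text{div}}w - I_h w}_K$, while the global $L^2$-optimality of $\Pi_h^{\text{div}}$ together with the approximation estimates of \Cref{lem:approximation_div} give $\norm{\Pi_h^{\text{div}}w - w}_{L^2(\Omega)} \lesssim h\norm{w}_{H^1}$. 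It is precisely in converting this global $L^2$-error into the sum $\sum_K h_K^{-2}\norm{\cdot}_K^2$ that the quasi-uniformity hypothesis enters (through $h \le C_U h_K$); adding back the $H^1$-stable term $\norm{\nabla_h I_h w}_{L^2}\lesssim \norm{w}_{H^1}$ then yields $\norm{\nabla_h \Pi_h^{\text{div}}w}_{L^2(\Omega)} \lesssim \norm{w}_{H^1}$. For the facet contribution $\sum_K h_K^{-1}\norm{\Pi_h^{\text{div}}w - \bar{\Pi}_h w}_{\partial K}^2$, I would use that the trace of $w$ is single-valued (as $w \in H^1$) to split $\Pi_h^{\text{div}}w - \bar{\Pi}_h w = (\Pi_h^{\text{div}}w - w) + (w - \bar{\Pi}_h w)$ on $\partial K$; a scaled discrete trace inequality applied to $\Pi_h^{\text{div}}w - w$ reduces the first piece to the already-controlled volume quantities, while the second piece is handled by the facet approximation property of $\bar{\Pi}_h$, giving $h_K^{-1}\norm{w - \bar{\Pi}_h w}_{\partial K}^2 \lesssim \norm{w}_{H^1(K)}^2$. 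Summing over $K$ and combining the two contributions delivers (a).

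For step (b), I would use that $\Pi^t$ acts only in time and hence commutes with spatial differentiation, so that $\nabla(\Pi^t\varphi) = \Pi^t(\nabla\varphi)$ and the $V$-norm stability of $\Pi^t$ reduces to the $L^p(I_n)$-stability of the scalar $L^2$-projection onto $P_{k_t}(I_n)$. The latter follows by scaling to the reference interval, where $L^2$-orthogonal projection onto the finite-dimensional space $P_{k_t}$ is bounded on $L^p$ with a constant depending only on $k_t$ and $p$; summing over the slabs $I_n$ gives (b) with a mesh-independent constant.

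I expect the main obstacle to be step (a), and within it the $H^1$-stability of the $L^2$-projection $\Pi_h^{\text{div}}$: this is the estimate that genuinely requires the quasi-uniformity assumption and a careful interplay between the global $L^2$-optimality of the projection and local inverse inequalities, in contrast to the more routine approximation-error bounds. The facet term and the temporal stability are comparatively standard once the correct mesh scalings are tracked.
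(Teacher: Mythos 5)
Your proposal is correct, and its skeleton coincides with the paper's: use the tensor-product structure to reduce \cref{eq:proj_bnd} to (a) a pointwise-in-time spatial stability bound for the pair $(\Pi_h^{\text{div}},\bar{\Pi}_h)$ in the $\tnorm{\cdot}_v$-norm and (b) an $L^p$-in-time stability bound for $\Pi^t$, then compose the two slab by slab. Where you diverge is in how the two sub-steps are discharged. For (a), the paper does not re-derive $H^1$-stability of $\Pi_h^{\text{div}}$ at all: it invokes \cref{eq:div_proj_est_1} of \Cref{lem:approximation_div} with $\ell=0$, $m=1$, which gives $\sum_{K}\norm{\nabla(\Pi_h^{\text{div}}\psi-\psi)}_{L^2(K)}^2\lesssim\envert{\psi}_{H^1(\Omega)}^2$ and hence stability by the triangle inequality, together with standard approximation results for the facet projection $\bar{\Pi}_h$; the interpolant-and-inverse-estimate machinery you describe is essentially what lives inside the proof of that lemma in \cite{Kirk:2021}. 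Your self-contained version is sound, with two caveats: the stable interpolant $I_h$ into $V_h^{\text{div}}$ must actually be exhibited (the BDM interpolant works, since it preserves the divergence-free constraint and the vanishing normal trace, and $V_h^{\text{div}}=V_h\cap H$), and the trace inequality you apply to $\Pi_h^{\text{div}}w-w$ must be the continuous scaled trace inequality rather than a discrete (polynomial) one, since that function is not piecewise polynomial. For (b), the paper treats $d=2$ by plain $L^2(I_n)$-stability of $\Pi^t$ and $d=3$ by an $L^2(I_n)$-to-$L^1(I_n)$ inverse estimate for polynomials in time followed by Cauchy--Schwarz; your direct appeal to $L^p(I_n)$-stability of the temporal $L^2$-projection with a constant depending only on $k_t$ is equivalent (the paper itself uses exactly this stability, citing \cite{Douglas:1975}, in the proof of \Cref{thm:compactness}), and your reference-interval scaling sketch is precisely how it is proved. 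In short, your route buys self-containedness at the price of length; the paper's buys brevity by outsourcing the spatial stability to \Cref{lem:approximation_div}. Your remarks that the constant must not depend on $\varphi$ (ruling out finite-dimensional norm equivalence on $\mathrm{span}\cbr{\psi_k}$) and that quasi-uniformity enters exactly in trading the global $h$ for the local $h_K$ are both correct and match the paper's hypotheses.
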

	\begin{proof}
		See \Cref{ss:proj_bnd_proof}.
	\end{proof}
	\subsubsection{Uniform bound in $L^{4/d}(0,T;V')$}
	
	With
	\Cref{lem:disc_time_der_bnd_disc_dual,lem:tensor_product_functions},
	and \Cref{prop:proj_bnd} in hand, we can now prove the main result of
	this subsection. Since $V'$ is separable, we can identify
	$L^{4/d}(0,T;V') \cong L^{4/(4-d)}(0,T;V)'$ (see
	e.g. \cite[Proposition 1.38]{Roubicek:book}), and since $(V,H,V')$
	form a Gelfand triple, we have
	\begin{equation} \label{eq:norm_duality_prod}
		\norm[0]{ \mathcal{D}_t^{k_t}(u_h)}_{L^{4/d}(0,T;V')}  = \sup_{ 0 \ne v \in L^{4/(4-d)}(0,T;V)} \frac{\big| \int_0^T ( \mathcal{D}_t^{k_t}(u_h),v)_{\mathcal{T}_h} \dif t \big|}{\norm{v}_{L^{4/(4-d)}(0,T;V)}}.
	\end{equation}
	
	\begin{theorem}[Uniform bound on the discrete time derivative] \label{thm:disc_time_der_bnd}
		Let $\del{\mathfrak{T}_h, \mathfrak{F}_h}$ be a conforming and
		quasi-uniform space-time prismatic mesh.  Let $d \in \cbr{2,3}$ and
		suppose $k_t \ge 0$ if $d=2$ and $k_t \in \cbr{0,1}$ if $d=3$. Let
		$\boldsymbol{u}_h$ be the discrete velocity pair arising from the
		solution of the space-time HDG scheme \cref{eq:discrete_problem}.
		Then
		$\norm[1]{\mathcal{D}^{k_t}_t(u_h)}_{L^{4/d}(0,T;V')} \le
		C(f,u_0,\nu,T)$.
	\end{theorem}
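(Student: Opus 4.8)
The plan is to estimate the numerator in the duality characterization \cref{eq:norm_duality_prod} by testing only against the convenient tensor-product family $\mathscr{M}$ from \Cref{lem:tensor_product_functions}, and then to transfer that test function into the discrete setting where \Cref{lem:disc_time_der_bnd_disc_dual} and \Cref{prop:proj_bnd} apply directly. First I would reduce the supremum in \cref{eq:norm_duality_prod} to $v \in \mathscr{M}$: since $\mathscr{M}$ is dense in $C_c(0,T;V)$ by \Cref{lem:tensor_product_functions}, and $C_c(0,T;V)$ is dense in $L^{4/(4-d)}(0,T;V)$ (note $4/(4-d)\in\cbr{2,4}$ is finite for $d\in\cbr{2,3}$), the family $\mathscr{M}$ is dense in $L^{4/(4-d)}(0,T;V)$ in the correct topology. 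Because $\mathcal{D}_t^{k_t}(u_h)$ has already been identified with an element of $L^{4/d}(0,T;V')$ (via \Cref{lem:disc_time_der_in_H} and the Gelfand triple), the associated functional is continuous on $L^{4/(4-d)}(0,T;V)$, so its dual norm is attained as a supremum over the dense subspace $\mathscr{M}$.

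The key algebraic step is to replace the continuous test function $v\in\mathscr{M}$ by its discrete projection $\Pi v$ without altering the pairing. For this I would use that $u_h\in\mathcal{V}_h^{\text{div}}$ together with \Cref{lem:disc_time_der_in_H} and the fact that $\mathcal{D}_t^{k_t}$ maps $\mathcal{V}_h$ into $\mathcal{V}_h$ to conclude that, on each slab, $\mathcal{D}_t^{k_t}(u_h)|_{\mathcal{E}^n}\in P_{k_t}(I_n;V_h\cap H)=P_{k_t}(I_n;V_h^{\text{div}})$, where the last identity is the characterization $V_h^{\text{div}}=V_h\cap H$ noted after \cref{eq:disc_div_free}. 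Since the operator $\Pi$ of \cref{eq:test_func_for_time_der_bnd} acts on $\mathscr{M}$ as the tensor product $\Pi_n^t\otimes\Pi_h^{\text{div}}$, it is exactly the $L^2(\mathcal{E}^n)$-orthogonal projection onto $P_{k_t}(I_n;V_h^{\text{div}})$; hence $v-\Pi v$ is $L^2$-orthogonal to $\mathcal{D}_t^{k_t}(u_h)$ slab by slab, and summing over $n$ gives
\[
\int_0^T (\mathcal{D}_t^{k_t}(u_h),v)_{\mathcal{T}_h}\dif t = \int_0^T (\mathcal{D}_t^{k_t}(u_h),\Pi v)_{\mathcal{T}_h}\dif t.
\]

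With the pairing rewritten against the admissible discrete pair $(\Pi v,\bar{\Pi} v)\in\mathcal{V}_h^{\text{div}}\times\bar{\mathcal{V}}_h$ (the facet component $\bar{\Pi} v$ does not enter the pairing itself but is needed to furnish an admissible test pair for the norm $\tnorm{\cdot}_v$), I would apply \Cref{lem:disc_time_der_bnd_disc_dual} to bound the right-hand side by $C(u_0,f,\nu,T)\del[2]{\int_0^T\tnorm{(\Pi v,\bar{\Pi} v)}_v^{4/(4-d)}\dif t}^{(4-d)/4}$, and then \Cref{prop:proj_bnd} to control this discrete quantity by $\del[2]{\int_0^T\norm{v}_V^{4/(4-d)}\dif t}^{(4-d)/4}=\norm{v}_{L^{4/(4-d)}(0,T;V)}$. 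Chaining the two estimates yields $\envert[1]{\int_0^T(\mathcal{D}_t^{k_t}(u_h),v)_{\mathcal{T}_h}\dif t}\le C(u_0,f,\nu,T)\norm{v}_{L^{4/(4-d)}(0,T;V)}$ for every $v\in\mathscr{M}$; dividing by the norm and taking the supremum in \cref{eq:norm_duality_prod} gives the claim.

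I expect the main obstacle to be the orthogonality step of the second paragraph: one must verify carefully that the discrete time derivative lands in the \emph{slabwise} discrete divergence-free space $P_{k_t}(I_n;V_h^{\text{div}})$—which is precisely where the commutation of $\partial_\tau$ and $R^{k_t}$ with the divergence, established in the proof of \Cref{lem:disc_time_der_in_H}, is used—so that the tensor-product $L^2$-projection annihilates $v-\Pi v$ against it. The remaining delicate point is to ensure density holds in the $L^{4/(4-d)}$ topology rather than merely the $C_c$ topology, so that passing from $\mathscr{M}$ to the full supremum in \cref{eq:norm_duality_prod} is legitimate; the rest is a direct assembly of the two already-established estimates.
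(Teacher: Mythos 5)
Your proposal is correct and takes essentially the same route as the paper's own proof: reduce the supremum in \cref{eq:norm_duality_prod} to the dense family $\mathscr{M}$, exchange $\varphi$ for $\Pi\varphi$ in the pairing using the $L^2$-projection properties, and then chain \Cref{lem:disc_time_der_bnd_disc_dual} with \Cref{prop:proj_bnd}. Your careful justification of the orthogonality step---that $\mathcal{D}_t^{k_t}(u_h)|_{\mathcal{E}^n}\in P_{k_t}(I_n;V_h^{\text{div}})$ by \Cref{lem:disc_time_der_in_H} together with $V_h^{\text{div}}=V_h\cap H$, so the slabwise tensor-product projection annihilates $\varphi-\Pi\varphi$ against it---is exactly the mechanism the paper invokes tersely as ``the definitions of the $L^2$-projections,'' so it is the same argument, just spelled out in more detail.
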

	\begin{proof}
		We follow the strategy used in the proof of \cite[Theorem 3.2]{Li:2015}.
		The density of $C_c(0,T;V)$ in $L^p(0,T;V)$ for $1 \le p < \infty$,  gives us also the density of $\mathscr{M}$ in $L^{4/(4-d)}(0;T,V)$.
		We therefore replace the supremum over $v \in L^{4/(4-d)}(0;T,V)$ in \cref{eq:norm_duality_prod} with the supremum
		over $\varphi \in \mathscr{M}$.
		Now let $\varphi \in \mathscr{M}$ be arbitrary.  Using the
		expansion of $\varphi$ \cref{eq:tensor_product_functions} the
		definitions of the $L^2$-projections $\Pi^t$ and $\Pi_h$, 
		\Cref{prop:proj_bnd}, and \Cref{lem:disc_time_der_bnd_disc_dual}, we have
		\begin{equation*}
			\begin{split}
				&\norm[0]{ \mathcal{D}_t^{k_t}(u_h)}_{L^{4/d}(0,T;V')}  
				\\ & \qquad= \sup_{0\ne \varphi \in \mathscr{M}} \frac{\big|\int_0^T ( \mathcal{D}_t^{k_t}(u_h),\varphi)_{\mathcal{T}_h} \dif t\big|}{\norm{\varphi}_{L^{4/(4-d)}(0,T;V)}} \\
				& \qquad= \sup_{0\ne \varphi \in \mathscr{M}} \frac{\big|\int_0^T ( \mathcal{D}_t^{k_t}(u_h), \Pi \varphi)_{\mathcal{T}_h} \dif t\big|}{ \del{ \int_0^T \tnorm{(\Pi\varphi, \bar \Pi \varphi)}^{4/(4-d)}_v \dif t}^{(4-d)/4}} \del{\frac{ \int_0^T \tnorm{(\Pi\varphi, \bar \Pi \varphi)}^{4/(4-d)}_v \dif t}{\int_0^T \norm{\varphi}_{V}^{4/(4-d)} \dif t}}^{(4-d)/4}\\
				& \qquad \lesssim \sup_{ 0\ne \varphi \in \mathscr{M}} \frac{\big|\int_0^T ( \mathcal{D}_t^{k_t}(u_h), \Pi \varphi)_{\mathcal{T}_h} \dif t\big|}{ \del{ \int_0^T \tnorm{(\Pi \varphi, \bar \Pi \varphi)}^{4/(4-d)}_v \dif t}^{(4-d)/4}}  \\
				& \qquad \lesssim \sup_{ 0\ne\boldsymbol{v}_h\in \mathcal{V}_h^{\text{div}} \times \bar{\mathcal{V}}_h} \frac{\big|\int_0^T ( \mathcal{D}_t^{k_t}(u_h), v_h)_{\mathcal{T}_h} \dif t\big|}{ \del{ \int_0^T \tnorm{\boldsymbol{v}_h}^{4/(4-d)}_v \dif t}^{(4-d)/4}} \le C(f,u_0,\nu,T).
			\end{split}
		\end{equation*}
	\end{proof}
	
	\subsection{Compactness}
	\label{ss:compactness}
	We end this section by summarizing the significance of the
	uniform bounds on the discrete velocity collected
	in \Cref{lem:energy_stability}, 
	\Cref{thm:bnd_disc_grad}, and \Cref{thm:disc_time_der_bnd}.
	In particular, we conclude that subsequences of the discrete velocity solution
	computed by the space-time HDG scheme \cref{eq:discrete_problem}
	converge to a limit function $u$ in suitable topologies. The goal of \Cref{s:convergence} will
	be to show that $u$ is in fact a weak solution to the Navier--Stokes problem in
	the sense of \Cref{def:equiv_weak_form}.
	
	\begin{theorem} \label{thm:convergence_of_subsequence}
		Let $\mathcal{H} \subset (0,|\Omega|)$ be a countable set of mesh sizes whose unique accumulation point is $0$.
		Assume that $\cbr[0]{(\mathfrak{T}_h,\mathfrak{F}_h)}_{h \in \mathcal{H}}$ is a sequence of conforming, shape-regular, and quasi-uniform space-time prismatic meshes,
		and suppose that $\tau \to 0$ as $h \to 0$.
		Suppose that $k_s \ge 1$ and $k_t \ge 0$ if $d=2$ and $k_t \in \cbr{0,1}$ if $d=3$, and for each fixed $h \in \mathcal{H}$, let $\boldsymbol{u}_h \in \boldsymbol{\mathcal{V}}_h$ be the discrete velocity pair computed on $(\mathfrak{T}_h,\mathfrak{F}_h)$
		using the space-time HDG scheme \cref{eq:discrete_problem}. Collect each of these solutions in the sequence
		$\cbr{\boldsymbol{u}_h}_{h \in \mathcal{H}}$. Then, there exists a function $u \in L^{\infty}(0,T;H) \cap L^2(0,T;V)$
		with $\frac{\dif u}{\dif t} \in L^{4/d}(0,T;V')$ such that, up to a (not relabeled) subsequence:
		\begin{align*}
			\emph{(i)} \;& u_h \overset{\star}{\rightharpoonup} u && \hspace{-15mm} \text{in } L^{\infty}(0,T;H) \text{ as } h \to 0,  \\
			\emph{(ii)} \; & u_h \to u   &&\hspace{-15mm} \text{in } L^2(0,T;L^2(\Omega)^d) \text{ as } h\to 0, \\
			\emph{(iii)}\;&G_{h}^{k_s}(\boldsymbol{u}_{h,i}) \rightharpoonup \nabla u_i &&\hspace{-15mm} \text{in } L^2(0,T;L^2(\Omega)^d) \text{ as } h \to 0, \\
			\emph{(iv)} \;&\mathcal{D}_{t}^{k_t}(u_{h}) \rightharpoonup \frac{\dif u}{\dif t} &&\hspace{-15mm} \text{in } L^{4/d}(0,T;V') \text{ as } h \to 0. \\
		\end{align*}
	\end{theorem}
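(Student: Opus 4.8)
The plan is to extract the limit $u$ and each of the four convergences through a combination of weak(-$\star$) compactness furnished by the uniform bounds already established, together with a discrete Aubin--Lions--Simon argument that upgrades weak to strong $L^2$ convergence, and finally a consistency step that identifies the weak limits of the discrete operators with the genuine distributional gradient and time derivative of $u$.

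First I would obtain the candidate limit and (i). Since $V_h^{\text{div}} \subset H$, we have $u_h(t) \in H$ for a.e. $t$, and by \cref{eq:energy_2} in \Cref{lem:energy_stability} the sequence $\{u_h\}$ is bounded in $L^{\infty}(0,T;H)$. As $H$ is a separable Hilbert space, $L^{\infty}(0,T;H)$ is the dual of the separable space $L^1(0,T;H)$, so the Banach--Alaoglu theorem yields a subsequence and a limit $u \in L^{\infty}(0,T;H)$ with $u_h \overset{\star}{\rightharpoonup} u$; since $H$ is a closed subspace of $L^2(\Omega)^d$, the limit indeed takes values in $H$, giving (i). In parallel, \Cref{thm:bnd_disc_grad} bounds $G_h^{k_s}(\boldsymbol{u}_{h,i})$ in $L^2(0,T;L^2(\Omega)^d)$ and \Cref{thm:disc_time_der_bnd} bounds $\mathcal{D}_t^{k_t}(u_h)$ in $L^{4/d}(0,T;V')$; both spaces are reflexive for $d \in \{2,3\}$, so I extract further subsequences with $G_h^{k_s}(\boldsymbol{u}_{h,i}) \rightharpoonup g_i$ and $\mathcal{D}_t^{k_t}(u_h) \rightharpoonup \chi$.

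Next I would prove the strong convergence (ii) by invoking the discrete Aubin--Lions--Simon compactness theorem \Cref{thm:compactness}, whose hypotheses are exactly the two uniform bounds already in hand: the spatial regularity encoded in $\int_0^T \tnorm{\boldsymbol{u}_h}_v^2 \dif t$ from \Cref{lem:energy_stability} (which gives spatial compactness through the discrete Rellich--Kondrachov embedding of \cite{DiPietro:2010}), and the temporal regularity encoded in the uniform bound of $F_h\colon \boldsymbol{v}_h \mapsto (\mathcal{D}_t^{k_t}(u_h), v_h)_{\mathcal{T}_h}$ in the discrete dual norm from \Cref{lem:disc_time_der_bnd_disc_dual}. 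Together these give relative compactness of $\{u_h\}$ in $L^2(0,T;L^2(\Omega)^d)$, so after passing to a subsequence $u_h \to u$ strongly; the strong $L^2$ limit must coincide with the weak-$\star$ limit from (i), which is what permits the use of a single $u$ throughout.

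With strong convergence in hand I would carry out the identification of the weak limits, which is the main obstacle. For (iii) I test $G_h^{k_s}(\boldsymbol{u}_{h,i})$ against a fixed $\Phi \in C_c^{\infty}(\Omega \times (0,T))^d$: using the defining identity \cref{eq:global_spat_lift} of the lifting and the definition \cref{eq:disc_grad_op}, I rewrite $\int_0^T\int_\Omega G_h^{k_s}(\boldsymbol{u}_{h,i}) \cdot \Phi \dif x \dif t$ as $-\int_0^T\int_\Omega u_{h,i}\,(\nabla \cdot \Phi) \dif x \dif t$ plus facet terms in $u_{h,i} - \bar u_{h,i}$; the latter are controlled by the penalty quantity $\sum_{K} h_K^{-1}\norm{u_{h,i}-\bar u_{h,i}}_{L^2(\partial K)}^2$ through \cref{eq:bnd_on_lift}, and vanish as $h \to 0$ since this quantity is bounded in $L^2(0,T)$ while the test-function residual carries a compensating factor $h_K^{1/2}$. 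Passing to the limit with $u_h \to u$ strongly identifies $g_i = \nabla u_i$ distributionally, so $u_i \in L^2(0,T;H^1(\Omega))$; combined with $\nabla \cdot u_h = 0$ and the vanishing of the facet traces on $\partial\Omega$, the limit satisfies $\nabla \cdot u = 0$ and $u = 0$ on $\partial\Omega$, whence $u \in L^2(0,T;V)$. For (iv) I proceed analogously in time: testing $\mathcal{D}_t^{k_t}(u_h) = \partial_\tau u_h + R^{k_t}(u_h)$ against the projection $\Pi\varphi$ of a tensor-product $\varphi \in \mathscr{M}$ and integrating by parts in time, the global time-lifting identity \cref{eq:global_time_lift} converts the broken time derivative plus lifted time jumps into $-\int_0^T (u_h, \partial_t \Pi\varphi)_{\mathcal{T}_h}\dif t$ up to endpoint contributions; the stability and approximation properties of $\Pi^t$ and $\Pi_h^{\text{div}}$ together with the strong convergence of $u_h$ then yield $\int_0^T \langle \chi, \varphi\rangle \dif t = -\int_0^T (u, \partial_t \varphi) \dif t$, i.e. $\chi = \tfrac{\dif u}{\dif t}$ in the sense of distributions, with $\tfrac{\dif u}{\dif t} \in L^{4/d}(0,T;V')$ preserved under reflexive weak limits (the target-space regularity coming from \Cref{lem:disc_time_der_in_H}). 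The delicate points throughout are verifying that the jump and lifting residuals vanish at the correct rate and diagonalizing all the extracted subsequences into a single one compatible with the one limit $u$.
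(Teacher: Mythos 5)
Your overall architecture is the same as the paper's: weak\nobreakdash-$\star$ compactness from \Cref{lem:energy_stability} for (i), the discrete Aubin--Lions theorem \Cref{thm:compactness} fed by \Cref{lem:energy_stability} and \Cref{lem:disc_time_der_bnd_disc_dual} for (ii), and identification of weak limits for (iii)--(iv); parts (i) and (ii) are correct as you present them. The genuine gap is in (iii): you test $G_h^{k_s}(\boldsymbol{u}_{h,i})$ only against $\Phi \in C_c^{\infty}(\Omega\times(0,T))^d$, i.e.\ against fields compactly supported \emph{inside} $\Omega$. This identifies $g_i = \nabla u_i$ only as distributions on $\Omega$, hence gives $u_i \in L^2(0,T;H^1(\Omega))$, but says nothing about the trace of $u$ on $\partial\Omega$. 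Your appeal to ``the vanishing of the facet traces on $\partial\Omega$'' is not an argument: the element trace of $u_h$ does \emph{not} vanish on $\partial\Omega$ (only its normal component does, and only the facet unknown $\bar{u}_h$ is constrained to zero there), and since $u_h \notin H^1(\Omega)^d$ there is no trace operator whose weak continuity you could invoke. The information forcing $u|_{\partial\Omega} = 0$ is the penalty control of $\sum_{K} h_K^{-1}\norm{u_h - \bar{u}_h}_{L^2(\partial K)}^2$ on boundary faces, and to transfer it to the limit you must test with functions that do \emph{not} vanish near $\partial\Omega$. This is precisely what the paper does: it takes $\phi \in C_c^{\infty}(\mathbb{R}^d)^d$, extends $u_h$, $u$, $G_h^{k_s}(\boldsymbol{u}_{h,i})$, and $R_h^{k_s}(u_{h,i}-\bar{u}_{h,i})$ by zero outside $\Omega$, and derives \cref{eq:limit_disc_grad_inter1}; the conclusion $w = \nabla u_i$ then holds in $L^2(0,T;L^2(\mathbb{R}^d)^d)$, so $u_i \in L^2(0,T;H^1(\mathbb{R}^d))$, and vanishing outside $\Omega$ forces the zero trace. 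Without this (or an equivalent device) you cannot conclude $u \in L^2(0,T;V)$.

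There is also a step in (iv) that fails as written. After integrating by parts in time against the discrete test function $\Pi\varphi$, what remains is not ``$-\int_0^T (u_h,\partial_\tau \Pi\varphi)_{\mathcal{T}_h}\dif t$ up to endpoint contributions'': the inter-slab terms $\sum_{n}\,(u_{n+1}^-,\,(\Pi\varphi)_{n+1}^+ - (\Pi\varphi)_{n+1}^-)_{\mathcal{T}_h}$ are genuinely present and are not negligible. For $k_t = 0$ (a case the theorem covers) one has $\partial_\tau \Pi\varphi \equiv 0$, so these jump terms carry the \emph{entire} time derivative, and your reduction would give the false conclusion $\int_0^T \langle \chi,\varphi\rangle_{V'\times V}\dif t = 0$. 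The paper avoids this by pairing $\mathcal{D}_t^{k_t}(u_h)$ with continuous-in-time test functions $v\eta$, $v \in V$, $\eta \in C_c^{\infty}(0,T)$ --- legitimate because $\mathcal{D}_t^{k_t}(u_h)$ takes values in $H$ by \Cref{lem:disc_time_der_in_H} --- so that the inter-slab sum telescopes exactly, see \cref{eq:time_der_conv_inter}, and vanishes since $\eta(0)=\eta(T)=0$; the weak convergence $\mathcal{D}_t^{k_t}(u_h)\rightharpoonup \chi$ in $L^{4/d}(0,T;V')$ together with the strong convergence of $u_h$ then yields $\chi = \frac{\dif u}{\dif t}$. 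Your variant can be repaired --- for instance, note that $\int_0^T(\mathcal{D}_t^{k_t}(u_h),\Pi\varphi)_{\mathcal{T}_h}\dif t = \int_0^T(\mathcal{D}_t^{k_t}(u_h),\varphi)_{\mathcal{T}_h}\dif t$ by $L^2$-orthogonality of the projections (as the paper uses in \Cref{ss:pass_to_limit}), and then argue with the continuous test function --- but as proposed the identification step is incorrect.
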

	\begin{proof}
		
		\noindent \textbf{(i)} Weak$-\star$ convergence. The existence of a $u$ satisfying (i) is a direct consequence of the uniform $L^{\infty}(0,T;L^2(\Omega)^d)$
		bound in \Cref{lem:energy_stability} and \cite[Corollary 3.30]{Brezis:book}.
		
		
		\textbf{(ii)} Strong convergence. This follows from \Cref{thm:compactness} due to the uniform energy bound in \Cref{lem:energy_stability}, the uniform bound on $\mathcal{D}_{t}^{k_t}(u_{h})$ in \Cref{lem:disc_time_der_bnd_disc_dual} (see also \Cref{rem:alternate_compact} and \cite[Theorem 3.2]{Li:2015}), and the uniqueness of distributional limits.
		
		
		
		\textbf{(iii)} Weak convergence of the discrete gradient. 
		By \Cref{thm:bnd_disc_grad}
		there exists $w \in L^2(0,T;L^2(\Omega)^d)$ such that, upon passage to a subsequence ,
		$G_h^k(\boldsymbol{u}_{h,i}) \rightharpoonup w$ in $L^2(0,T;L^2(\Omega)^d)$ 
		as $h \to 0$. 
		Let $\phi \in C_c^{\infty}(\mathbb{R}^d)^d$ and $\eta \in 
		C_c^{\infty}(0,T)$ be arbitrary and let $\Pi_h$ be the orthogonal
		$L^2$-projection onto $V_h$.
		Extending $u_{h,i}$, $G_h^{k_s}(\boldsymbol{u}_{h,i})$, $R_h^{k_s}(u_{h,i} - \bar{u}_{h,i})$, $u$, and $w$ by 
		zero outside 
		of $\Omega$, and integrating by parts element-wise in space, we have for all $\eta \in C^{\infty}_c(0,T)$ and $\phi \in C_c^{\infty}(\mathbb{R}^d)^d$  that
		\begin{equation} \label{eq:limit_disc_grad_inter1}
			\begin{split}
				\int_{0}^T& \del[2]{\int_{\mathbb{R}^d} G_h^{k_s}(\boldsymbol{u}_{h,i}) 
					\cdot \phi\dif x }\eta\dif t \\
				&= \int_{0}^T \del[2]{ -\int_{\mathbb{R}^d}  u_{h,i} \nabla \cdot \phi \dif x +\sum_{K \in \mathcal{T}_h} \int_{\partial K} (u_{h,i}  - \bar{u}_{h,i} )
					(\phi - \Pi_h \phi) \cdot n\dif s  
				} \eta \dif t,
			\end{split}
		\end{equation}
		where we have used \cref{eq:disc_grad_op,eq:global_spat_lift}, that $\phi$ and $\bar{u}_{h,i}$ are single-valued
		on element boundaries, and that $\bar{u}_{h,i}|_{\partial \Omega} = 0$.
		Moreover, 
		%
		\begin{equation} \label{eq:limit_disc_grad_inter2}
			\int_0^T\norm{\eta(\phi - \Pi_h \phi) \cdot n}^2_{L^2(\partial K)} \dif t\lesssim h^{2\ell + 1} \int_0^T \eta^2\norm{\phi}^2_{H^{\ell + 1}(\Omega)}\dif t.
		\end{equation}
		As a consequence of \cref{eq:limit_disc_grad_inter1,eq:limit_disc_grad_inter2} and the strong convergence in $L^2(0,T;L^2(\Omega)^d)$ of $u_h$ to $u$, it holds for all $\eta \in C^{\infty}_c(0,T)$ that
		\begin{equation}
			\int_{0}^T \del[2]{\int_{\mathbb{R}^d} w
				\cdot \phi\dif x }\eta\dif t  =	\lim_{h \to 0}\int_{0}^T \del[2]{\int_{\mathbb{R}^d} G_h^{k_s}(\boldsymbol{u}_{h,i}) 
				\cdot \phi\dif x }\eta\dif t = \int_{0}^T \del[2]{ -\int_{\mathbb{R}^d}  u_{i} \nabla \cdot \phi \dif x} \eta \dif t.
		\end{equation}
		Hence $w = \nabla u_i$ as elements of $L^2(0,T;L^2(\mathbb{R}^d)^d)$, so $u_i \in 
		L^2(0,T;H^1(\mathbb{R}^d))$. As $u_i$ vanishes outside of $\Omega$, the $H^1(\mathbb{R}^d)$-regularity ensures that $u_i$ vanishes on the boundary.
		As $u \in H$, its distributional divergence vanishes, and thus $u \in L^2(0,T;V)$. 
		
		\textbf{(iv)} Weak convergence of the discrete time derivative.
		By \Cref{thm:disc_time_der_bnd}, there exists a $z \in L^{4/d}(0,T;V')$ 
		such that, 
		upon passage to a subsequence , $\mathcal{D}_{t}^{k_t}(u_{h}) \rightharpoonup z$ in $L^{4/d}(0,T;V')$.
		%
		%
		For arbitrary $v \in V$ and $\eta \in C_c^{\infty}(0,T)$, we 
		use the definition of $\mathcal{D}_{t}^{k_t}(u_h)$ \cref{eq:discrete_time_der} and integrate by parts in time to find
		\begin{equation} \label{eq:time_der_conv_inter}
			\begin{split}
				\int_0^T & \langle \mathcal{D}_{t}^{k_t}(u_h), v \eta \rangle_{V' \times V} \dif t
				\\
				&= -\int_0^T ( u_h, v  )_{\mathcal{T}_h} \partial_t \eta \dif t + \sum_{n=0}^{N-1} \del{(u_{n+1}^-,v)_{\mathcal{T}_h}  \eta(t_{n+1})  - (u_{n}^-,v )_{\mathcal{T}_h} \eta(t_n) }.
			\end{split}
		\end{equation}
		The telescoping sum on the right hand side of \cref{eq:time_der_conv_inter} vanishes since
		$\eta(0) = \eta(T) = 0$. Thus, we can take the limit as $h \to 0$ to find
		that for all $\eta \in C_c^{\infty}(0,T)$,
		\begin{equation*}
			\begin{split}
				\int_0^T  \eta \langle z, v \rangle_{V' \times V}  \dif t =\lim_{h \to 0} \int_0^T  \langle \mathcal{D}_{t}^{k_t}(u_h), v \eta \rangle_{V' \times V} \dif t 
				= - \int_0^T \partial_t \eta  (u, v )_{\mathcal{T}_h} \dif t,
			\end{split}
		\end{equation*}
		since $\mathcal{D}_{t}^{k_t}(u_{h}) \rightharpoonup z$ in $L^{4/d}(0,T;V')$ and $u_h \to u$ in $L^2(0,T;L^2(\Omega)^d)$ as $h \to 0$. 
		Therefore, $z= \frac{du}{dt}$. 
	\end{proof}
	
	\section{Convergence to weak solutions}
	\label{s:convergence}
	
	The remainder of this article is dedicated to showing that the limiting function $u \in L^{\infty}(0;T,H) \cap L^2(0,T;V)$
	guaranteed by \Cref{thm:convergence_of_subsequence} is actually a weak solution of the Navier--Stokes problem in the sense of \Cref{def:equiv_weak_form}.
	The plan is as follows: we first construct a set of test functions in the discrete space that will allow us to conclude upon
	passage to the limit that $u$ solves \cref{eq:ns_weak_formulation2}. We will then show that
	the viscous term $a_h(\cdot,\cdot)$ and the nonlinear convection term $o_h(\cdot;\cdot,\cdot)$
	enjoy \emph{asymptotic consistency} in the sense of \cite[Definition 5.9]{Pietro:book},
	and use this to pass to the limit in \cref{eq:scheme_rewritten}. Finally, we discuss
	the energy (in)equality and conclude that
	the constructed weak solution $u \in L^{\infty}(0,T;H) \cap L^2(0,T;V)$ is a solution in the sense of Leray--Hopf. 
	
	From
	this point on, we assume that $\mathcal{H} \subset (0,|\Omega|)$ is a countable set of mesh sizes whose unique accumulation point is $0$,
	$\cbr[0]{(\mathfrak{T}_h,\mathfrak{F}_h)}_{h \in \mathcal{H}}$ is a sequence of conforming and quasi-uniform space-time prismatic meshes,
	and $\tau \to 0$ as $h \to 0$. For each fixed $h \in \mathcal{H}$, we let $\boldsymbol{u}_h = (u_h,\bar{u}_h) \in \mathcal{V}_h^{\text{div}} \times \bar{\mathcal{V}}_h$ be a discrete velocity pair computed on $(\mathfrak{T}_h,\mathfrak{F}_h)$
	using the space-time HDG scheme \cref{eq:discrete_problem} for $n=0,\dots,N-1$. We collect each of these solutions in the sequence
	$\cbr{\boldsymbol{u}_h}_{h \in \mathcal{H}}$.
	
	\subsection{Strong convergence of test functions}
	
	Passing to the limit in \cref{eq:scheme_rewritten} will require a suitable set of discrete test functions. 
	We will again use the set $\mathscr{M}$ of functions defined in \Cref{lem:tensor_product_functions} 
	as our basic building block, as it is sufficiently rich to ensure density in $C_c(0,T;V)$ while 
	its tensor-product structure allows us to easily combine
	spatial and temporal projections onto the discrete spaces. In particular, given $\varphi \in \mathscr{M}$,
	we will work with the discrete functions $\Pi \varphi$ and $\bar \Pi \varphi$, constructed in \cref{eq:test_func_for_time_der_bnd}.
	To set notation, we denote by $\Pi \varphi_i$ and $\bar \Pi \varphi_i$ the $i$th Cartesian
	component of the vector functions $\Pi \varphi$ and $\bar \Pi \varphi$, respectively. We first
	show a strong convergence result for the sequence of discrete 
	test functions $\cbr[0]{(\Pi \varphi, \bar{\Pi} \varphi)}_{h \in \mathcal{H}}$:
	\begin{proposition}\label{prop:test_func}
		Let $k_s \ge 1$,  $k_t \ge 0$ if $d=2$, and $k_t \in \cbr{0,1}$ if $d=3$. Suppose that $\varphi \in \mathscr{M}$
		and consider the sequence of discrete test functions $\cbr[0]{(\Pi \varphi, \bar{\Pi} \varphi)}_{h \in \mathcal{H}}$ defined
		in \cref{eq:test_func_for_time_der_bnd}.
		Then, it holds that $\Pi \varphi \to \varphi$ strongly in $L^{\infty}(0,T;L^{\infty}(\Omega)^d)$ and $G_{h}^{k_s}(( \Pi \varphi_i, \bar \Pi \varphi_i)) \to \nabla \varphi_i$ strongly in $L^2(0,T;L^2(\Omega)^d)$ as 
		$h\to 0$.
	\end{proposition}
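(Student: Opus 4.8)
Because $\mathscr{M}$ consists of finite sums $\varphi = \sum_{k=1}^M \eta_k \psi_k$, and because $\Pi$, $\bar\Pi$ and $G_h^{k_s}$ all act linearly, the plan is to reduce by the triangle inequality to a single tensor-product summand $\eta\psi$ with $\eta \in C_c^\infty(0,T)$ and $\psi \in \mathscr{V}$, and then to separate the temporal factor $\Pi^t\eta$ from the spatial factors $\Pi_h^{\mathrm{div}}\psi$ and $\bar\Pi_h\psi$. For the temporal factor, since $\Pi_n^t$ is the $L^2$-projection onto the fixed-degree space $P_{k_t}(I_n)$ and preserves constants, an affine scaling argument together with norm equivalence on $P_{k_t}$ shows that $\Pi^t$ is uniformly $L^\infty(0,T)$-stable and that $\norm{\Pi^t\eta - \eta}_{L^\infty(0,T)} \lesssim \tau \norm{\eta'}_{L^\infty(0,T)} \to 0$. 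For the spatial factors, I will invoke the approximation properties of $\Pi_h^{\mathrm{div}}$ recorded in \Cref{lem:approximation_div}, which supply optimal $L^2$ and broken-energy rates $\norm{\Pi_h^{\mathrm{div}}\psi - \psi}_{L^2(\Omega)} \lesssim h^{k_s+1}$ and $\norm{\nabla_h(\Pi_h^{\mathrm{div}}\psi - \psi)}_{L^2(\Omega)} \lesssim h^{k_s}$ for the smooth field $\psi$.

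For the first claim I would write $\Pi\varphi - \varphi = (\Pi^t\eta - \eta)\,\Pi_h^{\mathrm{div}}\psi + \eta\,(\Pi_h^{\mathrm{div}}\psi - \psi)$ and estimate in $L^\infty(0,T;L^\infty(\Omega)^d)$. The first term is controlled by the temporal estimate above and the uniform $L^\infty(\Omega)$-boundedness of $\Pi_h^{\mathrm{div}}\psi$; the crux is $\norm{\Pi_h^{\mathrm{div}}\psi - \psi}_{L^\infty(\Omega)} \to 0$. Since $L^2$-projections are not a priori $L^\infty$-stable, I upgrade the $L^2$ rate by inserting an interpolant $I_h\psi$ and applying the inverse inequality $\norm{v_h}_{L^\infty(\Omega)} \lesssim h^{-d/2}\norm{v_h}_{L^2(\Omega)}$, valid on $V_h$ by quasi-uniformity, to obtain $\norm{\Pi_h^{\mathrm{div}}\psi - \psi}_{L^\infty(\Omega)} \lesssim h^{\,k_s+1-d/2}\norm{\psi}_{H^{k_s+1}(\Omega)} + \norm{I_h\psi - \psi}_{L^\infty(\Omega)}$. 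As $k_s \ge 1$ and $d \le 3$, the surplus $k_s + 1 - d/2 \ge 1/2 > 0$, so both terms vanish as $h \to 0$; this is precisely where quasi-uniformity and the hypothesis $k_s \ge 1$ enter.

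For the second claim, recall from \cref{eq:disc_grad_op} that $G_h^{k_s}((\Pi\varphi_i,\bar\Pi\varphi_i)) = \nabla_h(\Pi\varphi_i) - R_h^{k_s}(\Pi\varphi_i - \bar\Pi\varphi_i)$, and I would treat the two pieces in turn. Splitting $\nabla_h(\Pi\varphi_i) - \nabla\varphi_i = (\Pi^t\eta - \eta)\nabla_h(\Pi_h^{\mathrm{div}}\psi)_i + \eta\,(\nabla_h(\Pi_h^{\mathrm{div}}\psi)_i - \nabla\psi_i)$, the first term vanishes in $L^2(0,T;L^2(\Omega))$ by the temporal estimate and the uniform $L^2$-bound on $\nabla_h\Pi_h^{\mathrm{div}}\psi$, and the second by the broken-energy rate of \Cref{lem:approximation_div}. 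For the lifting term I use the bound \cref{eq:bnd_on_lift}, which reduces matters to showing $\sum_{K}h_K^{-1}\norm{(\Pi_h^{\mathrm{div}}\psi)_i - (\bar\Pi_h\psi)_i}_{L^2(\partial K)}^2 \to 0$. Inserting the single-valued trace of the continuous field $\psi$ and noting that, because $\bar\Pi_h$ is the facewise $L^2$-projection onto $P_{k_s}(F)^d$ which contains the trace of $\Pi_h^{\mathrm{div}}\psi$, the facet contribution is dominated by the element trace error, the multiplicative trace inequality together with the $L^2$ and energy rates gives a per-element bound $O(h_K^{2k_s+1})$; dividing by $h_K$ and summing yields $O(h^{2k_s})\norm{\psi}_{H^{k_s+1}(\Omega)}^2 \to 0$. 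Carrying the uniformly bounded temporal factor through and integrating over $(0,T)$ preserves all rates, so $G_h^{k_s}((\Pi\varphi_i,\bar\Pi\varphi_i)) \to \nabla\varphi_i$ strongly in $L^2(0,T;L^2(\Omega)^d)$.

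The step I expect to be the main obstacle is the $L^\infty$-in-space convergence of $\Pi_h^{\mathrm{div}}\psi$: the $L^2$-optimality of the projection must be converted into a uniform statement, and the inverse-estimate argument closes only because $k_s \ge 1$ leaves the strictly positive surplus $k_s + 1 - d/2$ after paying the $h^{-d/2}$ cost of the inverse inequality. By comparison, the vanishing of the lifting term is routine once the trace inequality and \Cref{lem:approximation_div} are in hand.
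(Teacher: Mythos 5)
Your proposal is correct and follows essentially the same route as the paper: the same reduction to tensor-product summands with the temporal/spatial splitting, the same use of the lifting bound \cref{eq:bnd_on_lift}, and the same treatment of the facet term (cf. \cref{eq:div_facet_est}) via a trace inequality and the approximation properties of $\Pi_h^{\text{div}}$ and $\bar{\Pi}_h$. The only deviation is the sub-step you flag as the main obstacle: for the $L^{\infty}$ spatial estimate the paper does not use an interpolant plus inverse inequality, but instead cites \cref{eq:div_proj_est_2} of \Cref{lem:approximation_div}, which is proved by reference-element scaling and the embedding $H^2(\widehat{K}) \hookrightarrow L^{\infty}(\widehat{K})$ and gives the rate $h^{1/2}$; both arguments are valid and both rely on quasi-uniformity of the spatial mesh.
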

	
	\begin{proof}
		We first record the following consequences of \Cref{lem:approximation_div}:
		\begin{subequations}
			\label{eq:timeprojest_discgradconvinter2}
			\begin{align}
				\label{eq:time_proj_est}
				\norm[0]{\Pi^{t}\eta_k - \eta_k}_{L^{\infty}(0,T)}^2 &\lesssim \tau^2 \norm{\eta_k}_{W^{1,\infty}(0,T)}^2, 
				\\  
				\sum_{K \in \mathcal{T}_h}\norm[0]{ \nabla (\Pi_h^{\text{div}} \psi_k -  \psi_k)}_{L^2(K)}^2 &\lesssim h^2 \norm{\psi_k}_{H^2(\Omega)}^2,
				\\
				\norm[0]{\psi_k - \Pi_h^{\text{div}} \psi_k}_{L^{\infty}(\Omega)} &\lesssim h^{1/2} |\psi_k|_{H^{2}(\Omega)}.  \label{eq:div_inf_est}
			\end{align}    
		\end{subequations}
		That $\Pi \varphi \to \varphi$ strongly in $L^{\infty}(0,T;L^{\infty}(\Omega)^d)$ follows from \cref{eq:timeprojest_discgradconvinter2}, since
		\begin{equation}
			\begin{split}
				&\norm[0]{\varphi - \Pi \varphi}_{L^{\infty}(0,T;L^{\infty}(\Omega)^d)} \\
				& \lesssim \sum_{k=1}^m\del{\norm[0]{ \eta_k}_{L^{\infty}(0,T)} \norm[0]{\psi_k - \Pi_h^{\text{div}} \psi_k}_{L^{\infty}(\Omega)} + \norm[0]{\eta_k -\Pi^t \eta_k}_{L^{\infty}(0,T)} \norm[0]{\psi_k}_{H^2(\Omega)}}.
			\end{split}
		\end{equation} 
		We now prove the strong convergence of $G_{h}^{k_s}(( \Pi \varphi_i, \bar \Pi \varphi_i))$ to $\nabla \varphi_i$
		in $L^2(0,T;L^2(\Omega)^d)$. Using the definition of the discrete gradient \cref{eq:disc_grad_op}, the
		triangle inequality, and \cref{eq:bnd_on_lift},
		we find
		\begin{equation} \label{eq:test_disc_grad_bnd}
			\begin{split}
				\int_0^T&\norm[0]{G_h^{k_s}((\Pi \varphi_i, \bar{\Pi} \varphi_i)) - \nabla 
					\varphi_i}_{L^2(\Omega)}^2 \dif t \\
				& \le \sum_{K \in \mathcal{T}_h} \int_0^T \norm{\nabla \Pi_h \varphi - \nabla \varphi}_{L^2(K)}^2\dif t +\sum_{K \in \mathcal{T}_h} \int_0^T h_K^{-1} \norm{ \Pi \varphi - \bar \Pi \varphi}_{L^2(\partial K)}^2 \dif t.
			\end{split}
		\end{equation}
		We start with the first term on the right hand side of \cref{eq:test_disc_grad_bnd}.
		By the definition of
		$\Pi \varphi$, the triangle inequality, and \cref{eq:timeprojest_discgradconvinter2}, we can write
		\begin{equation*}
			\begin{split} 
				\sum_{K \in \mathcal{T}_h} &\int_0^T \norm{\nabla \Pi \varphi - \nabla \varphi}_{L^2(K)}^2 \dif t
				\\ & \lesssim \sum_{k=1}^M \sum_{K \in \mathcal{T}_h} \del[3]{\int_0^T (\Pi^{t} \eta_k)^2 \norm[0]{ \nabla (\Pi_h^{\text{div}} \psi_k -  \psi_k)}_{L^2(K)}^2 \dif t + \int_0^T( \Pi^{t} \eta_k  - \eta_k)^2\norm[0]{ \nabla \psi_k}_{L^2(K)}^2 \dif t }
				\\ &\lesssim \sum_{k=1}^M  \norm{\eta_k}_{W^{1,\infty}(0,T)}^2 \del[3]{  h^2  \int_0^T \norm{\psi_k}_{H^2(\Omega)}^2 \dif t + \tau^2  \int_0^T \norm{\psi_k}_{L^2(\Omega)}^2 \dif t},
			\end{split}
		\end{equation*}
		which can be seen to vanish as $h\to 0$.
		Turning now to the second term on the right hand side of \cref{eq:test_disc_grad_bnd}, we find
		\begin{equation} \label{eq:disc_grad_conv_inter3}
			\begin{split}
				\sum_{K \in \mathcal{T}_h} &\int_0^T h_K^{-1} \norm{ \Pi \varphi - \bar \Pi \varphi}_{L^2(\partial K)}^2 \dif t \\& \lesssim \sum_{k=1}^M
				\norm{\Pi^{t} \eta_k}^2_{W^{1,\infty}(0,T)} \sum_{K \in \mathcal{T}_h} \int_0^T h_K^{-1}  \norm[0]{ \Pi_h^{\text{div}} \psi_k - \bar{\Pi}_h \psi_k}_{L^2(\partial K)}^2 \dif t.
			\end{split}
		\end{equation}
		Using a discrete local trace inequality, the assumed quasi-uniformity of the space-time prismatic mesh, and the approximation properties of $\Pi_h^{\text{div}}$ and $\bar{\Pi}_h$, we find
		\begin{equation}  \label{eq:div_facet_est}
			h_K^{-1}\norm[0]{ \Pi_h^{\text{div}} \psi_k - \bar{\Pi}_h \psi_k}_{L^2(\partial K)}^2 \lesssim h^2\norm{\psi_k}_{H^2(\Omega)}^2,
		\end{equation}
		and thus the right hand side of \cref{eq:disc_grad_conv_inter3} vanishes as $h \to 0$. The result follows.
	\end{proof}

	\subsection{Asymptotic consistency of the linear viscous term}
	
	We are now in a position to show that the linear viscous term
	is asymptotically consistent in the sense of \cite[Definition 5.9]{Pietro:book}:
	
	\begin{theorem} \label{thm:asym_consist_visc}
		Let $k_s \ge 1$, $k_t \ge 0$ if $d=2$, and $k_t \in \cbr{0,1}$ if $d=3$. Let $\varphi \in \mathscr{M}$, denote by $(\Pi \varphi, \bar \Pi \varphi)$
		the discrete test functions constructed as in \cref{eq:test_func_for_time_der_bnd}, and let
		$u \in L^{\infty}(0,T;H) \cap L^2(0,T;V)$ be the limit (up to a subsequence)
		of $\cbr{u_h}_{h \in \mathcal{H}}$ guaranteed by \Cref{thm:convergence_of_subsequence}.
		Then, the following asymptotic consistency result holds for the linear
		viscous term:
		\begin{equation*}
			\lim_{h \to 0} 	\int_{0}^T a_h(\boldsymbol{u}_h, (\Pi \varphi, 
			\bar \Pi \varphi)) 
			\dif t =  \int_0^T \int_{\Omega} \nabla u : \nabla \varphi
			\dif x \dif t.
		\end{equation*}
	\end{theorem}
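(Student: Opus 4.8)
The plan is to substitute the rewritten expression \cref{eq:viscous_rewritten} for $a_h(\boldsymbol{u}_h, (\Pi \varphi, \bar \Pi \varphi))$ and analyze its three contributions in turn. Writing $\boldsymbol{v}_h = (\Pi \varphi, \bar \Pi \varphi)$ and retaining the convention of summation over the repeated Cartesian index $i$, I would decompose $\int_0^T a_h(\boldsymbol{u}_h, \boldsymbol{v}_h)\dif t = A_h + B_h + C_h$, where $A_h$ collects the discrete-gradient product $\int_0^T\int_\Omega G_h^{k_s}(\boldsymbol{u}_{h,i}) \cdot G_h^{k_s}(\boldsymbol{v}_{h,i})\dif x\dif t$, $B_h$ is the (negative of the) lifting product $\int_0^T\int_\Omega R_h^{k_s}(u_{h,i}-\bar u_{h,i}) \cdot R_h^{k_s}(\Pi \varphi_i - \bar \Pi \varphi_i)\dif x\dif t$, and $C_h$ is the interior penalty term $\int_0^T\sum_{K}\int_{\partial K}\frac{\alpha}{h_K}(u_{h,i}-\bar u_{h,i})(\Pi \varphi_i - \bar \Pi \varphi_i)\dif s\dif t$. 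The goal is to show that $A_h$ tends to the claimed limit while $B_h, C_h \to 0$.

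For the principal term $A_h$, I would combine the weak convergence $G_h^{k_s}(\boldsymbol{u}_{h,i}) \rightharpoonup \nabla u_i$ in $L^2(0,T;L^2(\Omega)^d)$ from \Cref{thm:convergence_of_subsequence}(iii) with the strong convergence $G_h^{k_s}((\Pi \varphi_i, \bar \Pi \varphi_i)) \to \nabla \varphi_i$ in the same space from \Cref{prop:test_func}. Since the inner product of a weakly convergent sequence against a strongly convergent one in a Hilbert space passes to the limit, $A_h \to \int_0^T\int_\Omega \nabla u_i \cdot \nabla \varphi_i\dif x\dif t$, which upon summation over $i$ is precisely $\int_0^T\int_\Omega \nabla u : \nabla \varphi\dif x\dif t$.

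For $B_h$ and $C_h$ the guiding observation is that each pairs a factor built from $u_h$, which is only uniformly bounded, against a factor built from the projected test function, which is uniformly small. In $C_h$, Cauchy--Schwarz in space-time bounds $|C_h|$ by $\alpha$ times the factor $(\int_0^T \sum_K h_K^{-1}\norm{u_{h,i}-\bar u_{h,i}}_{L^2(\partial K)}^2\dif t)^{1/2}$, which is dominated by $(\int_0^T \tnorm{\boldsymbol{u}_h}_v^2\dif t)^{1/2}$ and hence uniformly bounded by \Cref{lem:energy_stability}, multiplied by $(\int_0^T \sum_K h_K^{-1}\norm{\Pi \varphi_i - \bar \Pi \varphi_i}_{L^2(\partial K)}^2\dif t)^{1/2}$; the latter is exactly the facet seminorm shown to vanish as $h \to 0$ in the proof of \Cref{prop:test_func} via the estimate \cref{eq:div_facet_est}, so $C_h \to 0$. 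For $B_h$, the stability bound \cref{eq:bnd_on_lift} controls each lifting operator by the same facet seminorm; after Cauchy--Schwarz in $L^2(0,T;L^2(\Omega)^d)$ followed by \cref{eq:bnd_on_lift}, $B_h$ again factors into a term bounded by the energy estimate and a term proportional to the vanishing facet seminorm of the test function, yielding $B_h \to 0$.

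The conceptual heart is the weak--strong convergence argument for $A_h$, but this is immediate once \Cref{thm:convergence_of_subsequence}(iii) and \Cref{prop:test_func} are invoked. The main technical point to get right is the vanishing of $B_h$ and $C_h$: these terms are controllable only because the jump seminorm of $u_h$ is paired against the jump seminorm of the \emph{projected} test function, which decays like $h$, rather than against itself. It is therefore essential to apply Cauchy--Schwarz so as to isolate the vanishing factor and not merely bound the whole product by the uniformly bounded but non-vanishing energy norm.
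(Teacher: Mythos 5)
Your proposal is correct and follows essentially the same route as the paper: the same three-term decomposition of \cref{eq:viscous_rewritten}, the weak--strong pairing of $G_h^{k_s}(\boldsymbol{u}_{h,i})$ against $G_h^{k_s}((\Pi\varphi_i,\bar\Pi\varphi_i))$ via \Cref{thm:convergence_of_subsequence}(iii) and \Cref{prop:test_func} for the principal term, and Cauchy--Schwarz isolating the uniformly bounded jump seminorm of $\boldsymbol{u}_h$ (via \Cref{lem:energy_stability} and \cref{eq:bnd_on_lift}) against the vanishing facet seminorm of the projected test function (via \cref{eq:div_facet_est}) for the lifting and penalty terms.
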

	
	\begin{proof}
		Since
		$G_h^{k_s}((\Pi \varphi_i, \bar \Pi \varphi_i)) \to \nabla
		\varphi_i$ strongly in $L^2(0,T;L^2(\Omega)^d)$ by
		\Cref{prop:test_func} and by \Cref{thm:convergence_of_subsequence}
		(iii), $G_h^{k_s}(\boldsymbol{u}_{h,i}) \to \nabla u_i$ weakly in
		$L^2(0,T;L^2(\Omega)^d)$ as $h\to 0$, we can pass to the limit in
		the first term of \cref{eq:viscous_rewritten} to find that
		\begin{equation*}
			\lim_{h \to 0} \int_0^T \int_{\Omega} 
			G_h^{k_s}(\boldsymbol{u}_{h,i}) 
			\cdot 
			G_h^{k_s}( (\Pi \varphi_i, 
			\bar \Pi \varphi_i)) \dif x \dif t = \int_0^T \int_{\Omega}\nabla 
			u_i \cdot 
			\nabla \varphi_i \dif x \dif t.
		\end{equation*}
		Turning to the second term of \cref{eq:viscous_rewritten},
		we have by the Cauchy-Schwarz inequality, the
		bound on the global spatial lifting operator \cref{eq:bnd_on_lift},
		the definition of $\Pi \varphi$ and $\bar \Pi \varphi$, and
		uniform bound \Cref{lem:energy_stability},
		\begin{equation*}
			\begin{split}
				\int_0^T&\int_{\Omega} R_h^{k_s}(u_{h,i}-\bar{u}_{h,i}) 
				\cdot  
				R_h^{k_s}(\Pi \varphi_i -\bar{\Pi} \varphi_i)  \dif x \dif 
				t  \\
				& \le C(f,u_0,\nu)
				\del[3]{\sum_{k=1}^M \sum_{K \in \mathcal{T}_h} \int_0^T h_K^{-1} \Pi^{t} \eta_k
					\norm{\Pi_h^{\text{div}} \psi_k -\bar{\Pi}_h \psi_k}^2_{L^2(\partial K)} }^{1/2},
			\end{split} 
		\end{equation*}
		which can be seen to vanish as $h\to 0$ by \cref{eq:time_proj_est} and \cref{eq:div_facet_est}.
		In an identical fashion, we find
		\begin{equation*}
			\lim_{h \to 0} \sum_{K\in\mathcal{T}_h} \int_0^T\int_{\partial 
				K}\frac{\alpha  
			}{h_K}(u_h - \bar{u}_h)\cdot(\Pi \varphi_i-\bar \Pi \varphi_i) \dif s \dif t = 0.
		\end{equation*}
		The result follows.
	\end{proof}

	\subsection{Asymptotic consistency of the nonlinear convection term}
	We now prove that the nonlinear convection term
	is asymptotically consistent in the sense of \cite[Definition 5.9]{Pietro:book}:

	\begin{theorem} \label{thm:asym_consist_conv}
		Suppose that $k_s \ge 1$ and $k_t \ge 0$ if $d=2$ and $k_t \in \cbr{0,1}$ if $d=3$. Let $\varphi \in \mathscr{M}$, denote by $(\Pi \varphi, \bar \Pi \varphi)$
		the discrete test functions constructed as in \cref{eq:test_func_for_time_der_bnd}, and let
		$u \in L^{\infty}(0,T;H) \cap L^2(0,T;V)$ be an accumulation point
		of $\cbr{u_h}_{h \in \mathcal{H}}$ guaranteed by \Cref{thm:convergence_of_subsequence}.
		Then, the following \emph{asymptotic consistency} result holds for the nonlinear
		convection term:
		\begin{equation*}
			\lim_{h \to 0} 	\int_{0}^T o_h(u_h; \boldsymbol{u}_h, (\Pi \varphi, 
			\bar{\Pi} \varphi)) 
			\dif t =  \int_0^T \int_{\Omega} (u \cdot \nabla u) \cdot 
			\varphi
			\dif x \dif t.
		\end{equation*}
		
	\end{theorem}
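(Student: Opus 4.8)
The plan is to split the rewritten trilinear form \cref{eq:convection_rewritten} into its volume contribution $T_h^{\Omega} := \int_0^T \int_\Omega u_h \cdot G_h^{2k_s}(\boldsymbol{u}_{h,i}) \, \Pi \varphi_i \dif x \dif t$ and its facet contribution $T_h^{\partial} := \int_0^T \sum_{K \in \mathcal{T}_h} \int_{\partial K} \tfrac12 \del{u_h \cdot n + \envert{u_h \cdot n}}(u_h - \bar u_h)\cdot(\Pi \varphi - \bar \Pi \varphi)\dif s \dif t$, and to show that $T_h^{\Omega}$ converges to the asserted limit while $T_h^{\partial} \to 0$ as $h \to 0$. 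Throughout, summation over repeated Cartesian indices $i,j$ is understood.

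For the volume term, I would first record that the argument proving \Cref{thm:convergence_of_subsequence}~(iii) applies mutatis mutandis to the degree-$2k_s$ discrete gradient (only the polynomial degree of the $L^2$-projection applied to the smooth test field changes): the uniform bound on $G_h^{2k_s}(\boldsymbol{u}_{h,i})$ in $L^2(0,T;L^2(\Omega)^d)$ follows from $\norm{\nabla_h u_{h,i}}_{L^2(\Omega)} \le \tnorm{\boldsymbol{u}_h}_v$ and the analogue of \cref{eq:bnd_on_lift} for the lifting $R_h^{2k_s}$ (whose constant depends only on the polynomial degree), together with the energy estimate \Cref{lem:energy_stability}, while the integration-by-parts identity used for \Cref{thm:convergence_of_subsequence}~(iii) identifies the weak limit, yielding $G_h^{2k_s}(\boldsymbol{u}_{h,i}) \rightharpoonup \nabla u_i$ in $L^2(0,T;L^2(\Omega)^d)$. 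With this in hand I would pass to the limit in $T_h^{\Omega}$ by a strong--weak product argument: by \Cref{thm:convergence_of_subsequence}~(ii), $u_h \to u$ strongly in $L^2(0,T;L^2(\Omega)^d)$, while by \Cref{prop:test_func}, $\Pi \varphi \to \varphi$ strongly in $L^\infty(0,T;L^\infty(\Omega)^d)$ with $\Pi \varphi$ uniformly bounded, so the product $u_{h,j}\,\Pi \varphi_i \to u_j \varphi_i$ strongly in $L^2(0,T;L^2(\Omega))$. Pairing this strongly convergent factor against the weakly convergent factor $G_h^{2k_s}(\boldsymbol{u}_{h,i})$ and using that a strong$\times$weak product converges in $L^1$ gives $T_h^{\Omega} \to \int_0^T \int_\Omega u_j \partial_j u_i \, \varphi_i \dif x \dif t = \int_0^T\int_\Omega (u\cdot\nabla u)\cdot\varphi \dif x \dif t$.

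It then remains to show $T_h^{\partial} \to 0$. Bounding $\envert{u_h\cdot n + \envert{u_h\cdot n}} \le 2\envert{u_h}$ and applying the Cauchy--Schwarz inequality over the faces $\partial K$ (pointwise in time), pairing $\envert{u_h - \bar u_h}$ with weight $h_K^{-1/2}$ against $\envert{u_h}\,\envert{\Pi \varphi - \bar \Pi \varphi}$ with weight $h_K^{1/2}$, I obtain
\begin{equation*}
\envert{T_h^{\partial}} \lesssim \int_0^T \tnorm{\boldsymbol{u}_h}_v \del[3]{\sum_{K\in\mathcal{T}_h} h_K \int_{\partial K} \envert{u_h}^2 \envert{\Pi \varphi - \bar \Pi \varphi}^2 \dif s}^{1/2} \dif t.
\end{equation*}
Extracting $\norm{\Pi \varphi - \bar \Pi \varphi}_{L^\infty(\partial K)}$, estimating it by a local inverse inequality together with the approximation bound \cref{eq:div_facet_est} (so $\norm{\Pi \varphi - \bar \Pi \varphi}^2_{L^\infty(\partial K)} \lesssim h_K^{2-d} h^2$, with constant depending on the fixed datum $\varphi$), and using the discrete trace inequality $\norm{u_h}^2_{L^2(\partial K)} \lesssim h_K^{-1}\norm{u_h}^2_{L^2(K)}$ with quasi-uniformity, the inner sum is controlled by $h^2 \sum_K h_K^{2-d}\norm{u_h}^2_{L^2(K)} \lesssim h\,\norm{u_h}^2_{L^2(\Omega)}$ for both $d=2,3$. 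A final Cauchy--Schwarz in time, the uniform bound $\norm{u_h}_{L^\infty(0,T;L^2(\Omega))} \le C$ from \cref{eq:energy_2} (this is where the restriction $k_t \in \cbr{0,1}$ enters when $d=3$), and the energy bound \cref{eq:energy_1} on $\int_0^T \tnorm{\boldsymbol{u}_h}_v^2 \dif t$ then yield $\envert{T_h^{\partial}} \lesssim h^{1/2}\,C(f,u_0,\nu,T) \to 0$.

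The main obstacle is the facet term, and in particular securing decay in three dimensions: the crude estimate $\envert{u_h\cdot n}\le\envert{u_h}$ discards the favourable structure of the upwind weight, so the vanishing rests delicately on balancing the half-power of $h$ gained from the energy-norm control of $\norm{u_h-\bar u_h}_{L^2(\partial K)}$ against the $L^\infty$ approximation estimate for $\Pi \varphi - \bar \Pi \varphi$ and the $L^\infty(0,T;L^2)$ bound on $u_h$. A secondary point requiring care is the justification that the doubled-degree discrete gradient $G_h^{2k_s}$ shares the weak limit $\nabla u_i$ of its degree-$k_s$ counterpart, which I resolve by re-running the argument of \Cref{thm:convergence_of_subsequence}~(iii).
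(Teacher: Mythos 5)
Your proposal is correct, and its skeleton matches the paper's: the same decomposition into the volume term and the facet term of \cref{eq:convection_rewritten}, the same strong--weak pairing for the volume term (strong $L^2(0,T;L^2(\Omega)^d)$ convergence of $u_h\,\Pi\varphi_i$ from \Cref{thm:convergence_of_subsequence}~(ii) and \Cref{prop:test_func}, against weak convergence of $G_h^{2k_s}(\boldsymbol{u}_{h,i})$), and the same goal of showing the facet term vanishes. Where you genuinely diverge is the facet estimate. The paper disposes of it by proceeding as in the proof of \cite[Proposition 3.4]{Cesmelioglu:2017}, using only the uniform bound on $\int_0^T \tnorm{\boldsymbol{u}_h}_v^2 \dif t$ and arriving at a bound proportional to $\sum_{k}\norm{\Pi^t\eta_k}_{L^{\infty}(0,T)}\bigl(\sum_K h_K^{-1}\norm{\Pi_h^{\text{div}}\psi_k - \bar\Pi_h\psi_k}^2_{L^2(\partial K)}\bigr)^{1/2}$, which vanishes by \cref{eq:div_facet_est}. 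You instead give a self-contained computation: the crude bound $\envert{u_h\cdot n + \envert{u_h\cdot n}} \le 2\envert{u_h}$, a weighted Cauchy--Schwarz against the $\tnorm{\cdot}_v$-norm, a face $L^{\infty}$ inverse estimate combined with \cref{eq:div_facet_est}, a discrete trace inequality, and quasi-uniformity, closing with the $L^{\infty}(0,T;L^2(\Omega)^d)$ bound \cref{eq:energy_2}. This buys an explicit $O(h^{1/2})$ decay rate and avoids importing the external trilinear-form machinery, at the price of invoking \cref{eq:energy_2} (which is exactly where the restriction $k_t\in\cbr{0,1}$ for $d=3$ bites in your argument) and of discarding the sign structure of the upwind weight --- both harmless here, since the hypotheses of the theorem supply \cref{eq:energy_2} anyway. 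A further point in your favor: you justify that $G_h^{2k_s}(\boldsymbol{u}_{h,i}) \rightharpoonup \nabla u_i$ by rerunning the argument of \Cref{thm:convergence_of_subsequence}~(iii) with the degree-$2k_s$ lifting and projection, a step the paper asserts without proof; your observation that only the polynomial degree in \cref{eq:bnd_on_lift} and in the defining identity \cref{eq:global_spat_lift} changes is exactly the right justification.
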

	
	\begin{proof}
		We start with the first term on the right hand side of \cref{eq:convection_rewritten}. 
		By H\"{o}lder's inequality, we have
		\begin{equation*}
			\begin{split}
				&\int_0^T \norm{u \varphi_i -u_h \Pi \varphi_i}_{L^2(\Omega)}^2\dif t\\
				&\lesssim \norm[0]{\varphi - \Pi \varphi}_{L^{\infty}(0,T;L^{\infty}(\Omega)^d)}^2  \int_0^T \norm{u}_{L^2(\Omega)}^2 \dif t+ \norm[0]{\Pi \varphi}_{L^{\infty}(0,T;L^{\infty}(\Omega)^d)}^2 \int_0^T \norm{u -u_h }_{L^2(\Omega)}^2\dif t,
			\end{split}
		\end{equation*}
		which can be seen to vanish as $h\to0$ by \Cref{prop:test_func} and \Cref{thm:convergence_of_subsequence}.
		Therefore, $u_h \Pi \varphi_i \to u\varphi_i $ strongly in $L^2(0,T;L^2(\Omega)^d)$ as $h\to 0$,
		and this combined with the fact that $G_h^{2k_s}(\boldsymbol{u}_{h,i}) \rightharpoonup \nabla u$ yields
		\begin{equation*}
			\lim_{h \to 0} \int_0^T \int_{\Omega} u_h \cdot 
			G_h^{2k_s}(\boldsymbol{u}_{h,i}) 
			\Pi \varphi_i
			\dif x \dif t = \int_0^T \int_{\Omega} (u \cdot \nabla u) \cdot 
			\varphi
			\dif x \dif t.
		\end{equation*}
		It remains to show that the facet term appearing in \cref{eq:convection_rewritten}
		converges to $0$ as $h \to 0$.
		By the definitions of $\Pi \varphi$ and $\bar{\Pi}
		\varphi$, proceeding as in the proof of \cite[Proposition 3.4]{Cesmelioglu:2017}, 
		and using the fact that $ \int_{0}^T  
		\tnorm{\boldsymbol{u}_h}_v^2 
		\dif t$ is uniformly bounded by \Cref{lem:energy_stability},
		\begin{equation*}
			\begin{split}
				\bigg|\sum_{K\in\mathcal{T}_h}& \int_{0}^T\int_{\partial 
					K}\tfrac{1}{2}\del{u_h 
					\cdot n + 
					\envert{u_h\cdot 
						n}}(u_h-\bar{u}_h)\cdot(\Pi \varphi -
				\bar{\Pi} \varphi)\dif s \dif t \bigg| \\
				& \le C(f,u_0,\nu) \sum_{k=1}^M \norm[0]{ \Pi^{t}
					\eta_k}_{L^{\infty}(0,T)}  
				\del[3]{\sum_{K\in\mathcal{T}_h}h_K^{-1}
					\norm[0]{\Pi_h^{\text{div}} 
						\psi_k -\bar{\Pi}_h 
						\psi_k}_{L^2(\partial K)}^2}^{1/2},
			\end{split}
		\end{equation*}
		which can be seen to vanish as $h \to 0$ by using the second bound in \cref{eq:time_proj_est} and  \cref{eq:div_facet_est}.
	\end{proof}
	
	\subsection{Passing to the limit} \label{ss:pass_to_limit}
	
	With the asymptotic consistency of the linear viscous term  (\Cref{thm:asym_consist_visc}) and the nonlinear
	convection term (\Cref{thm:asym_consist_conv}), we are ready to pass to the limit
	as $h \to 0$ in \cref{eq:scheme_rewritten}. 
	Extract from $\cbr{\boldsymbol{u}_h}_{h \in \mathcal{H}}$ the subsequence satisfying the convergence results
	listed in \Cref{thm:convergence_of_subsequence}.
	Let $\varphi \in \mathscr{M}$ and
	choose $\boldsymbol{v}_h = (\Pi \varphi, \bar \Pi \varphi) \in \mathcal{V}_{h}^{\text{div}} \times \bar{\mathcal{V}}_h$
	as a test function in \cref{eq:scheme_rewritten}:
	\begin{equation} \label{eq:pre_pass_limit}
		\begin{split}
			\int_{0}^T &( \mathcal{D}_t^{k_t} (u_h),\Pi \varphi)_{\mathcal{T}_h} \dif t  +
			\int_{0}^T \del{ \nu a_h(\boldsymbol{u}_h, (\Pi\varphi, \bar \Pi \varphi)) + o_h(u_h; 
				\boldsymbol{u}_h, (\Pi \varphi, \bar \Pi \varphi) } \dif t =   \int_{0}^T (f, 
			\Pi \varphi)_{\mathcal{T}_h} 
			\dif t.
		\end{split}
	\end{equation}
	By the definition of $\Pi \varphi$, we have 
	\begin{equation}
		\int_{0}^T ( \mathcal{D}_t^{k_t} (u_h),\Pi \varphi)_{\mathcal{T}_h} \dif t = \int_{0}^T( \mathcal{D}_t^{k_t} (u_h),\varphi)_{\mathcal{T}_h} \dif t = \int_{0}^T \langle \mathcal{D}_t^{k_t} (u_h),\varphi \rangle_{V' \times V} \dif t.
	\end{equation}
	Thus, the weak convergence of $\mathcal{D}_t^{k_t} (u_h)$ to $\od{u}{t}$ in $L^{4/d}(0,T;V')$
	yields
	\begin{equation}
		\lim_{h \to 0} \int_0^T( \mathcal{D}_t^{k_t} (u_h),\Pi \varphi)_{\mathcal{T}_h}  \dif t = \int_{0}^T \big \langle \od{u}{t}, \varphi \big\rangle_{V'\times V}\dif t.
	\end{equation}

	This, in combination with \Cref{thm:asym_consist_visc} and \Cref{thm:asym_consist_conv},
	shows that upon passage to the limit as $h \to 0$ in \cref{eq:pre_pass_limit} that the
	limit $u \in L^{\infty}(0,T;H) \cap L^2(0,T;V)$ of
	the subsequence $\cbr{u_h}_{h \in \mathcal{H}}$ given by \Cref{thm:convergence_of_subsequence}
	satisfies for all $\varphi \in \mathscr{M}$, 
	\begin{equation}  \label{eq:lim_pass_pre_density}
		\int_0^T \bigg \langle \od{u}{t}, \varphi \bigg\rangle_{V' \times V} 
		\dif t 
		+ 
		\int_0^T((u \cdot \nabla)u , \varphi) \dif t + \nu \int_0^T (\nabla u, 
		\nabla 
		\varphi) \dif t = \int_0^T ( f, \varphi)_{\mathcal{T}_h}
		\dif t.
	\end{equation}
	By the density of the set $\mathscr{M}$ in $C_c(0,T;V)$, \cref{eq:lim_pass_pre_density}
	holds also for all $\varphi \in C_c(0,T;V)$. 
	
	We now show that $u \in L^{\infty}(0,T;H) \cap L^2(0,T;V)$ satisfies the initial condition in the sense that
	$u(0) = u_0$ in $V'$.
	Our starting point is
	the definition of the discrete time derivative \cref{eq:discrete_time_der} 
	in a single space-time slab $\mathcal{E}^n$:
	\begin{equation} \label{eq:init_cond_inter}
		\int_{I_n}(\partial_t u_h, v_h )_{\mathcal{T}_h}\dif t +(u_n^+ - u_n^-,v_n^+)_{\mathcal{T}_h}= \int_{I_n} (\mathcal{D}_{t}^{k_t}(u_h), v_h )_{\mathcal{T}_h} \dif t, \quad \forall v_h \in \mathcal{V}_h.
	\end{equation}
	Let $\psi \in V$ and $\eta \in C^{\infty}(0,T)$ such that
	$\eta(T) = 0$.  Define a function $w_h \in \mathcal{V}_h^{\text{div}}$
	by setting
	$w_h|_{\mathcal{E}^n} = \Pi_n^{k_t} \eta\Pi_h^{\text{div}} \psi$ with
	$\Pi_n^{k_t}: H^1(I_n) \to P_{k_t}(I_n)$ the DG time projection
	defined as in \cite[Section 69.3.2]{Ern:bookiii}. Define also the global projection
	$\Pi^{k_t}|_{I_n} = \Pi^{k_t}_n$.
	We note that by definition, $(\psi, v_h)_{\mathcal{T}_h} = (\Pi_h^{\text{div}} \psi, v_h)_{\mathcal{T}_h}$
	for all $v_h \in V_h^{\text{div}}$, and by the defining properties of the projection $\Pi_n^{k_t}$ (see \cite[Eq. (69.26)]{Ern:bookiii}), 
	\begin{equation*}
		\int_{I_n}(\partial_t u_h, \Pi_h^{\text{div}} \psi )_{\mathcal{T}_h} \Pi_{n}^{k_t} \eta \dif t
		= \int_{I_n}(\partial_t u_h, \Pi_h^{\text{div}} \psi )_{\mathcal{T}_h} \eta \dif t
		\quad \text{and} \quad
		(\Pi_n^{k_t}\eta)(t_n^+) = \eta(t_n),
	\end{equation*}
	since $\partial_t u_h \in P_{k_t -1}(I_n)$ (with the convention that $P_{-1}(I_n) = \cbr{0}$).
  Choosing $v_h = w_h$
	in \cref{eq:init_cond_inter}, we find
	\begin{equation} \label{eq:init_cond_IBP_1}
		\int_{I_n}(\partial_t u_h, \psi )_{\mathcal{T}_h} \eta \dif t + (u_n^+ - u_n^-,\psi )_{\mathcal{T}_h} \eta(t_n) = \int_{I_n} (\mathcal{D}_{t}^{k_t}(u_h), \psi )_{\mathcal{T}_h} \Pi^{k_t} \eta \dif t.
	\end{equation}
	%
	%
	Integrating by parts in time on the left hand side of \cref{eq:init_cond_IBP_1}, summing
	over all space-time slabs, and using that $\eta(T) = 0$, we have
	%
	\begin{equation} \label{eq:init_cond_inter3}
		-\int_{0}^T(u_h, \psi )_{\mathcal{T}_h} \partial_t \eta \dif t  - ( u_0^-,\psi )_{\mathcal{T}_h} \eta(0) = \int_{0}^T (\mathcal{D}_{t}^{k_t}(u_h), \psi )_{\mathcal{T}_h} \Pi^{k_t} \eta \dif t.
	\end{equation}
	%
	
	From \Cref{thm:convergence_of_subsequence} (i) and (iii), and since $u_0^- = \Pi_h^{\text{div}} u_0 \to u_0$ strongly in $H$, and $\Pi^{k_t} \eta \to \eta$ strongly in $L^{4/(4-d)}(0,T)$ by \cref{eq:time_proj_est}, we can pass to the limit as $h \to 0$ in \cref{eq:init_cond_inter3}
	to find
	\begin{equation} \label{eq:init_cond_inter4}
		-\int_{0}^T(u, \psi )_{\mathcal{T}_h}  \partial_t \eta \dif t  - ( u_0,\psi )_{\mathcal{T}_h} \eta(0) = 	\int_{0}^{T} \bigg \langle \od{u}{t}, \psi \bigg \rangle_{V' \times V} \eta\dif t.
	\end{equation}
	Comparing \cref{eq:init_cond_inter4} with \cite[Theorem II.5.12]{Boyer:book}, we find that
	%
	%
	\begin{equation*}
		0 = (u(0) - u_0, \psi)_{\mathcal{T}_h} = \langle u(0) - u_0, \psi \rangle_{V' \times V}, \; \forall \psi \in V \quad \Rightarrow \quad u(0) = u_0 \text{ in } V'.
	\end{equation*}
	Therefore, we have proven:
	\begin{theorem} \label{thm:convergence}
		Let $u_0 \in H$ and $f \in L^2(0,T;L^2(\Omega)^d)$ be given and let $\mathcal{H} \subset (0,|\Omega|)$ be a countable set of mesh sizes whose unique accumulation point is $0$.
		Assume that $\cbr[0]{(\mathfrak{T}_h,\mathfrak{F}_h)}_{h \in \mathcal{H}}$ is a sequence of conforming and quasi-uniform space-time prismatic meshes,
		and suppose that $\tau \to 0$ as $h \to 0$.
		Suppose that $k_s \ge 1$, $k_t \ge 0$ if $d=2$ and $k_t \in \cbr{0,1}$ if $d=3$, and for each fixed $h \in \mathcal{H}$, let $\boldsymbol{u}_h \in \boldsymbol{\mathcal{V}}_h$ be the discrete velocity pair computed on $(\mathfrak{T}_h,\mathfrak{F}_h)$
		using the space-time HDG scheme \cref{eq:discrete_problem} for $n=0,\dots, N-1$. Collect these discrete solutions in the sequence
		$\cbr{\boldsymbol{u}_h}_{h \in \mathcal{H}}$. Then, upon passage to a subsequence, $\cbr{u_h}_{h \in \mathcal{H}}$ converges as $h\to 0$ (in the sense of \Cref{thm:convergence_of_subsequence}) to
		a weak solution of the Navier--Stokes problem \cref{eq:ns_weak_formulation2} $u \in L^{\infty}(0,T;H) \cap L^2(0,T;V)$ with
		$\od{u}{t} \in L^{4/d}(0,T;V')$.
	\end{theorem}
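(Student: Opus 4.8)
The plan is to assemble the convergence and asymptotic consistency results established in the preceding sections and pass to the limit in the rewritten scheme \cref{eq:scheme_rewritten}. First I would invoke \Cref{thm:convergence_of_subsequence} to extract a (not relabeled) subsequence of $\cbr{u_h}_{h\in\mathcal{H}}$ converging to a limit $u \in L^{\infty}(0,T;H)\cap L^2(0,T;V)$ with $\od{u}{t}\in L^{4/d}(0,T;V')$, enjoying the four convergence modes (i)--(iv). This immediately supplies the regularity class demanded by \Cref{def:equiv_weak_form}, so it remains only to verify the weak formulation \cref{eq:ns_weak_formulation2} and the initial condition $u(0)=u_0$ in $V'$.

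For the weak formulation, I would fix $\varphi \in \mathscr{M}$ and test \cref{eq:scheme_rewritten} against the discrete function $(\Pi\varphi,\bar{\Pi}\varphi) \in \mathcal{V}_h^{\text{div}}\times\bar{\mathcal{V}}_h$ from \cref{eq:test_func_for_time_der_bnd}, then pass to the limit term by term. The discrete time-derivative term converges to $\int_0^T \langle \od{u}{t},\varphi\rangle_{V'\times V}\dif t$ using convergence (iv) together with the identity $(\mathcal{D}_t^{k_t}(u_h),\Pi\varphi)_{\mathcal{T}_h} = \langle\mathcal{D}_t^{k_t}(u_h),\varphi\rangle_{V'\times V}$; the viscous term converges by \Cref{thm:asym_consist_visc}; the convection term by \Cref{thm:asym_consist_conv}; and the forcing term converges because $\Pi\varphi\to\varphi$ strongly in $L^{\infty}(0,T;L^{\infty}(\Omega)^d)$ by \Cref{prop:test_func}. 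This yields \cref{eq:lim_pass_pre_density} for every $\varphi\in\mathscr{M}$, and the density of $\mathscr{M}$ in $C_c(0,T;V)$ from \Cref{lem:tensor_product_functions} extends it to all admissible test functions, producing \cref{eq:ns_weak_formulation2}. One should note here that $(f,\varphi)_{\mathcal{T}_h} = \langle f,\varphi\rangle_{H^{-1}\times H_0^1}$, since $f\in L^2$ and $\varphi\in V\subset H_0^1(\Omega)^d$, so the two forms of the forcing term agree.

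The step I expect to be most delicate is the initial condition, since the time-jump structure of the DG-in-time scheme must be handled carefully; this is the one part that cannot simply be read off from the asymptotic consistency results. I would start from the single-slab identity \cref{eq:init_cond_inter} and test with $w_h|_{\mathcal{E}^n} = \Pi_n^{k_t}\eta\,\Pi_h^{\text{div}}\psi$ for $\psi\in V$ and $\eta\in C^{\infty}(0,T)$ with $\eta(T)=0$, where the DG time projection $\Pi_n^{k_t}$ is chosen precisely so that its defining properties collapse the $\Pi_n^{k_t}\eta$ factor on the broken-derivative term (which lies in $P_{k_t-1}(I_n)$) and give $(\Pi_n^{k_t}\eta)(t_n^+)=\eta(t_n)$. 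Integrating by parts in time, summing over all slabs, and using $\eta(T)=0$ produces \cref{eq:init_cond_inter3}; passing to the limit via convergence (i) and (iv), the strong convergence $u_0^- = \Pi_h^{\text{div}}u_0\to u_0$ in $H$, and $\Pi^{k_t}\eta\to\eta$ then yields \cref{eq:init_cond_inter4}. Comparing this with the continuous integration-by-parts formula \cite[Theorem II.5.12]{Boyer:book} isolates $(u(0)-u_0,\psi)=0$ for all $\psi\in V$, whence $u(0)=u_0$ in $V'$. Collecting the weak formulation and the verified initial condition completes the proof.
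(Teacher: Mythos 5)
Your proposal is correct and follows essentially the same route as the paper: extract the subsequence via \Cref{thm:convergence_of_subsequence}, test \cref{eq:scheme_rewritten} with $(\Pi\varphi,\bar{\Pi}\varphi)$, pass to the limit term by term using \Cref{thm:asym_consist_visc}, \Cref{thm:asym_consist_conv}, and the convergence modes (i)--(iv), extend by density of $\mathscr{M}$ in $C_c(0,T;V)$, and recover the initial condition through the DG time projection $\Pi_n^{k_t}$ and the single-slab identity \cref{eq:init_cond_inter}. As a minor aside, your citation of convergences (i) and (iv) in the initial-condition limit passage is in fact the accurate one, since it is the discrete time derivative (not the discrete gradient of item (iii), which the paper cites there) that appears on the right-hand side of \cref{eq:init_cond_inter3}.
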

	
	\subsection{The energy inequality}
	%
	
	In three dimensions, we are not guaranteed uniqueness of weak solutions
	and cannot conclude a priori that the weak solution obtained from \Cref{thm:convergence} 
	satisfies the energy inequality \cref{eq:energy_ineq}. We show below that the week solution in fact does satisfy \cref{eq:energy_ineq}.
	
	\begin{lemma}
		Let $d = 3$, $k_s \ge 1$, and $k_t \in \cbr{0,1}$.
		The weak solution
		$u \in L^{\infty}(0,T;H) \cap L^2(0,T;V)$ given by \Cref{thm:convergence} satisfies
		the energy inequality for a.e. $s \in (0,T]$:
		\begin{equation}
			\norm{u(s)}_{L^2(\Omega)}^2 + 2\nu \int_0^s \norm{u}_V^2 \dif t \; \le \;	\norm{u_0}_{L^2(\Omega)}^2 + 2 \int_0^s ( f,u)_{L^2(\Omega)}\dif t.
		\end{equation}
		%
	\end{lemma}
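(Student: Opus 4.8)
The plan is to derive a fully discrete energy balance by testing the scheme with the discrete solution itself, drop the nonnegative stabilization and convection contributions, and then pass to the limit using weak lower semicontinuity for the dissipation together with an integrated test-function argument to recover the pointwise inequality for a.e. $s$.

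First I would test \eqref{eq:discrete_problem} with $(\boldsymbol v_h,\boldsymbol q_h)=(\boldsymbol u_h,\boldsymbol p_h)$ on the slab $\mathcal E^n$; the pressure--velocity terms cancel, and using $-\int_{I_n}(u_h,\partial_t u_h)_{\mathcal T_h}\dif t=-\tfrac12(\|u_{n+1}^-\|_{L^2(\Omega)}^2-\|u_n^+\|_{L^2(\Omega)}^2)$ together with the identity $\tfrac12\|u_n^+\|_{L^2(\Omega)}^2-(u_n^-,u_n^+)_{\mathcal T_h}=\tfrac12\|\sbr{u_h}_n\|_{L^2(\Omega)}^2-\tfrac12\|u_n^-\|_{L^2(\Omega)}^2$, the time terms collapse. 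Summing over $n=0,\dots,m$ gives the per-node balance, in which the convection enters as $\int_0^{t_{m+1}}o_h(u_h;\boldsymbol u_h,\boldsymbol u_h)\dif t$ and is nonnegative by \eqref{eq:ohequals} (since $u_h\in\mathcal V_h^{\text{div}}$), the jump contributions $\tfrac12\|\sbr{u_h}_n\|_{L^2(\Omega)}^2$ are nonnegative, and $a_h(\boldsymbol u_h,\boldsymbol u_h)\ge\sum_i\|G_h^{k_s}(\boldsymbol u_{h,i})\|_{L^2(\Omega)}^2$ by \eqref{eq:lower_bnd_a_h} for $\alpha$ large. Dropping the nonnegative terms yields, for every $m$,
\begin{equation*}
\tfrac12\|u_{m+1}^-\|_{L^2(\Omega)}^2+\nu\int_0^{t_{m+1}}\sum_i\|G_h^{k_s}(\boldsymbol u_{h,i})\|_{L^2(\Omega)}^2\dif t\le\tfrac12\|\Pi_h^{\text{div}}u_0\|_{L^2(\Omega)}^2+\int_0^{t_{m+1}}(f,u_h)_{\mathcal T_h}\dif t.
\end{equation*}

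To upgrade this node-wise statement to an almost-everywhere inequality I would fix $\phi\in C_c^\infty(0,T)$ with $\phi\ge0$, multiply by $\int_{I_m}\phi\,\dif s\ge0$, and sum over $m$. On the right, $\|\Pi_h^{\text{div}}u_0\|_{L^2(\Omega)}\le\|u_0\|_{L^2(\Omega)}$ with $\Pi_h^{\text{div}}u_0\to u_0$ in $L^2(\Omega)$ handles the data, while $g_h(s):=\int_0^s(f,u_h)_{\mathcal T_h}\dif t\to\int_0^s(f,u)$ uniformly in $s$ by Cauchy--Schwarz and the strong $L^2(0,T;L^2(\Omega))$ convergence of \Cref{thm:convergence_of_subsequence}, so the sampled values $g_h(t_{m+1})$ converge in the Riemann-sum sense. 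For the dissipation I would use $\sum_m(\int_{I_m}\phi)\int_0^{t_{m+1}}\sum_i\|G_h^{k_s}\|^2\ge\int_0^T\Phi(t)\sum_i\|G_h^{k_s}(\boldsymbol u_{h,i})\|_{L^2(\Omega)}^2\dif t$ with $\Phi(t)=\int_t^T\phi\ge0$ (Fubini, monotonicity of the inner integral), and then invoke $G_h^{k_s}(\boldsymbol u_{h,i})\rightharpoonup\nabla u_i$ from \Cref{thm:convergence_of_subsequence} and weak lower semicontinuity of $w\mapsto\int_0^T\Phi\|w\|_{L^2(\Omega)}^2$ to bound the $\liminf$ below by $\nu\int_0^T\phi(s)\int_0^s\|\nabla u\|_{L^2(\Omega)}^2$.

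The hard part is the nodal energy term $\tfrac12\sum_m\|u_{m+1}^-\|_{L^2(\Omega)}^2\int_{I_m}\phi$, which I would rewrite as $\tfrac12\int_0^T\phi\|v_h\|_{L^2(\Omega)}^2$ via the right-nodal piecewise-constant interpolant $v_h|_{I_m}:=u_{m+1}^-$. The obstacle is that $v_h$ does \emph{not} converge strongly to $u$: the intra-slab slope $\partial_\tau u_h$ is uncontrolled in $L^2(0,T;L^2(\Omega))$, so $\|v_h-u_h\|_{L^2(0,T;L^2(\Omega))}$ need not vanish. The resolution, and technical heart of the argument, is the exact identity $v_h(t)-u_h(t)=(t_{m+1}-t)\,\partial_\tau u_h(t)$ on $I_m$, valid precisely for $k_t\in\{0,1\}$, which gives $\|v_h-u_h\|_{L^{4/d}(0,T;V')}\le\tau\|\partial_\tau u_h\|_{L^{4/d}(0,T;V')}$. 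Writing $\partial_\tau u_h=\mathcal D_t^{k_t}(u_h)-R^{k_t}(u_h)$, the first term is bounded in $L^{4/d}(0,T;V')$ by \Cref{thm:disc_time_der_bnd}, while the representation \eqref{eq:time_lift_representation} gives $\int_0^T\|R^{k_t}(u_h)\|_{L^2(\Omega)}^2\lesssim\tau\sum_n\|\sbr{u_h}_n\|_{L^2(\Omega)}^2\to0$ by \Cref{lem:energy_stability}; hence $v_h-u_h\to0$ in $L^{4/d}(0,T;V')$. Since $v_h$ is bounded in $L^\infty(0,T;H)$, uniqueness of distributional limits forces $v_h\rightharpoonup u$ weakly in $L^2(0,T;H)$, and weak lower semicontinuity of $w\mapsto\int_0^T\phi\|w\|_{L^2(\Omega)}^2$ yields $\liminf_h\tfrac12\int_0^T\phi\|v_h\|_{L^2(\Omega)}^2\ge\tfrac12\int_0^T\phi\|u\|_{L^2(\Omega)}^2$. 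Collecting the four limits gives $\int_0^T\phi(s)\big[\tfrac12\|u(s)\|_{L^2(\Omega)}^2+\nu\int_0^s\|\nabla u\|_{L^2(\Omega)}^2-\tfrac12\|u_0\|_{L^2(\Omega)}^2-\int_0^s(f,u)\big]\dif s\le0$ for all nonnegative $\phi\in C_c^\infty(0,T)$; as the bracketed function lies in $L^1(0,T)$, the fundamental lemma of the calculus of variations yields the inequality for a.e. $s$, which after multiplying by two and using $\|u\|_V=\|\nabla u\|_{L^2(\Omega)}$ is exactly the asserted energy inequality.
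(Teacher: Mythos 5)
Your overall strategy coincides with the paper's own proof: test the scheme with $\boldsymbol{u}_h$, drop the nonnegative jump and convection contributions, keep the lower bound $a_h(\boldsymbol{u}_h,\boldsymbol{u}_h)\ge\sum_i\norm[0]{G_h^{k_s}(\boldsymbol{u}_{h,i})}_{L^2(\Omega)}^2$ from \cref{eq:lower_bnd_a_h}, localize in time with a nonnegative test function, and pass to the limit using strong convergence for the body-force term and weak lower semicontinuity for the two quadratic terms. The paper does exactly this, treating $k_t=0$ directly (where $u_h(s)=u_{n_s+1}^-$) and handling $k_t=1$ by introducing the same right-nodal piecewise-constant interpolant (its $\tilde u_h$, your $v_h$) and then \emph{citing} \cite[Corollary 3.2]{Walkington:2005} for the weak convergence $\tilde u_h\rightharpoonup u$ in $L^2(0,T;L^2(\Omega)^d)$. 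You instead try to prove that weak convergence yourself, and it is precisely in that self-contained step --- which you correctly identify as the technical heart --- that your argument contains a genuine error.

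Your estimate $\int_0^T\norm[0]{R^{k_t}(u_h)}_{L^2(\Omega)}^2\dif t\lesssim\tau\sum_n\norm[0]{\sbr{u_h}_n}_{L^2(\Omega)}^2$ has the wrong scaling in $\tau$. The defining property $\int_{I_n}(R^{k_t}_{loc,n}(u_h),v_h)_{\mathcal{T}_h}\dif t=(\sbr{u_h}_n,v_n^+)_{\mathcal{T}_h}$ forces the lifting to behave like the jump divided by the time step: for $k_t=0$ one has exactly $R^{k_t}_{loc,n}(u_h)=\sbr{u_h}_n/\Delta t_n$, hence $\int_{I_n}\norm[0]{R^{k_t}_{loc,n}(u_h)}_{L^2(\Omega)}^2\dif t=\norm[0]{\sbr{u_h}_n}_{L^2(\Omega)}^2/\Delta t_n$, and the same $\Delta t_n^{-1}$ scaling holds for $k_t=1$ (take $v_h=R^{k_t}_{loc,n}(u_h)$ in the defining property and use an inverse inequality in time). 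So the correct statement is $\int_0^T\norm[0]{R^{k_t}(u_h)}_{L^2(\Omega)}^2\dif t\simeq\sum_n\Delta t_n^{-1}\norm[0]{\sbr{u_h}_n}_{L^2(\Omega)}^2$, which the energy estimate does \emph{not} make small --- it may grow like $\tau^{-1}$. Consequently your claim that $R^{k_t}(u_h)\to0$ strongly in $L^2(0,T;L^2(\Omega)^d)$, and with it your proof that $\norm[0]{v_h-u_h}_{L^{4/3}(0,T;V')}\le\tau\norm[0]{\partial_\tau u_h}_{L^{4/3}(0,T;V')}\to0$, is unjustified as written. The step is repairable: estimate the lifting directly in the dual norm, $\norm[0]{R^{k_t}(u_h)(t)}_{V'}\lesssim\norm[0]{\sbr{u_h}_n}_{L^2(\Omega)}/\Delta t_n$ on $I_n$, so that quasi-uniformity of the time partition and H\"older's inequality over $n$ give $\norm[0]{R^{k_t}(u_h)}_{L^{4/3}(0,T;V')}\lesssim\tau^{-1/2}\bigl(\sum_n\norm[0]{\sbr{u_h}_n}_{L^2(\Omega)}^2\bigr)^{1/2}$; combined with \Cref{thm:disc_time_der_bnd} this yields $\tau\norm[0]{\partial_\tau u_h}_{L^{4/3}(0,T;V')}\lesssim\tau+\tau^{1/2}\to0$, recovering $v_h-u_h\to0$ in $L^{4/3}(0,T;V')$ and hence, via the $L^\infty(0,T;H)$ bound on $v_h$ and uniqueness of limits, $v_h\rightharpoonup u$ as you need. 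Alternatively, you can simply invoke \cite[Corollary 3.2]{Walkington:2005} as the paper does. The remaining components of your proposal --- the discrete energy balance, the treatment of the body force, the Fubini/monotonicity handling of the dissipation, and the final localization argument --- are sound and essentially identical to the paper's.
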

	\begin{proof}
		Let $s \in (0,T)$ be fixed and choose $n_s \in \cbr{0, 1, \dots, N-1}$
		such that $t_{n_{s}} \le s \le t_{n_{s}+1}$. Testing \cref{eq:discrete_problem}
		with $\boldsymbol{v}_h = \boldsymbol{u}_h \in \mathcal{V}_h^{\text{div}} \times \bar{\mathcal{V}}_h$,
		using \cref{eq:lower_bnd_a_h} and the stabillity of $\Pi_h^{\text{div}}$ in $L^2(\Omega)^d$, and summing from $n=0$ to $n=n_s$, we have
		%
		\begin{equation} \label{eq:energy_ineq_inter1}
			\norm[1]{u_{n_s+1}^-}_{L^2(\Omega)}^2 +
			2\nu\sum_{i=1}^3 \int_{0}^{s}\norm[0]{G_{h}^{k_s}(\boldsymbol{u}_h)}_{L^2(\Omega)}^2 \dif t  \le 
			\norm[0]{u_0}_{L^2(\Omega)}^2 + 2\int_{0}^{t_{n_s+1}} (f,u_h)_{\mathcal{T}_h} \dif t.
		\end{equation}
		Let us first suppose that $k_t = 0$. Since $u_{t_{n_s+1}}^- = u_h(s)$ for
		$s \in (t_{n_s}, t_{n_{s}+1})$ in this case, \cref{eq:energy_ineq_inter1} yields
		\begin{equation} \label{eq:energy_ineq_inter2}
			\norm[1]{u_h(s)}_{L^2(\Omega)}^2 +
			2\nu\sum_{i=1}^3 \int_{0}^{s}\norm[0]{G_{h}^{k_s}(\boldsymbol{u}_h)}_{L^2(\Omega)}^2 \dif t  \le 
			\norm[0]{u_0}_{L^2(\Omega)}^2 + 2\int_{0}^{t_{n_s+1}} (f,u_h)_{\mathcal{T}_h} \dif t.
		\end{equation}
		Our goal is to justify passage to the limit in \cref{eq:energy_ineq_inter2}. 	%
		
		To this end, we multiply both sides of \cref{eq:energy_ineq_inter2} by an arbitrary $\phi \in C_c^{\infty}(\mathbb{R})$ satisfying $\phi \ge 0$
		and integrate from $s=0$ to $s=T$:
		\begin{equation} \label{eq:energy_ineq_inter3}
			\begin{split}
			\int_{0}^T \bigg( \norm[1]{u_h(s)}_{L^2(\Omega)}^2 &+
			2\nu \sum_{i=1}^3\int_{0}^{s}\norm[0]{G_{h}^{k_s}(\boldsymbol{u}_{h,i})}_{L^2(\Omega)}^2 \dif t \bigg) \phi(s) \dif s \\&\le 
			\int_0^T \bigg(\norm[0]{u_0}_{L^2(\Omega)}^2 + 2\int_{0}^{t_{n_s+1}} (f,u_h)_{\mathcal{T}_h} \dif t\bigg) \phi(s) \dif s.
			\end{split}
		\end{equation}
		%
		%
		%
		We first consider the integral involving the body force $f$.
		By the triangle inequality, the Cauchy-Schwarz inequality, discrete Poincar\'{e} inequality \cref{eq:disc_poinc}, and
		the uniform energy bound in \Cref{lem:energy_stability}, we obtain
		\begin{equation} \label{eq:estimate_f}
			\bigg| \int_{0}^{t_{n_s+1}} (f,u_h)_{\mathcal{T}_h} \dif t - \int_{0}^{s} (f,u)_{\mathcal{T}_h} \dif t \bigg| \lesssim \del[3]{\int_{s}^{s+\tau} \norm{f}_{L^2(\Omega)}^2 \dif t}^{1/2} +  \int_{0}^{s} |(f,u_h -u)_{\mathcal{T}_h}| \dif t .
		\end{equation}
		%
		%
		Since $f \in L^2(0,T;L^2(\Omega)^d)$, the primitive $F(\tau) = \int_{s}^{s+\tau} \norm{f}_{L^2(\Omega)}^2 \dif t$ 
		is absolutely continuous. This, combined with the fact $u_h \to u$ strongly in $L^2(0,T;L^2(\Omega)^d)$, 
		shows that the right hand side of \cref{eq:estimate_f} vanishes as $h \to 0$ (since also $\tau \to 0$),
		and so
		\begin{equation*}
			\lim_{h \to 0} \int_{0}^{t_{n_s+1}} (f,u_h)_{\mathcal{T}_h} \dif t = \int_{0}^{s} (f,u)_{\mathcal{T}_h} \dif t.
		\end{equation*}
		Thus, we can apply Lebesgue's dominated convergence theorem to find
		\begin{equation} \label{eq:dct_source}
			\lim_{h \to 0} \int_0^T \bigg(\int_{0}^{t_{n_s+1}} (f,u_h)_{\mathcal{T}_h} \dif t \bigg) \phi(s) \dif s = \int_0^T \bigg(\int_{0}^{s} (f,u)_{\mathcal{T}_h} \dif t \bigg)\phi(s) \dif s.
		\end{equation}
		With \cref{eq:dct_source} in hand, we  pass to the lower limit as $h \to 0$ in \cref{eq:energy_ineq_inter3} and use
		Fatou's lemma, the weak lower semicontinuity of norms, and \Cref{thm:convergence_of_subsequence}:
		%
		\begin{equation*}
			\begin{split}
				\int_{0}^T& \bigg( \norm[1]{u(s)}_{L^2(\Omega)}^2 +
				2\nu \int_{0}^{s}\norm[0]{u}_{V}^2 \dif t \bigg) \phi(s) \dif s \le 
				\int_0^T \bigg(\norm[0]{u_0}_{L^2(\Omega)}^2 + 2\int_{0}^{s} (f,u)_{\mathcal{T}_h} \dif t\bigg) \phi(s) \dif s.
			\end{split}
		\end{equation*}
		As this holds for all $\phi \in C_c^{\infty}(\mathbb{R})$ satisfying
		$\phi \ge 0$, we have for a.e. $s \in [0,T]$
		\cite[pp. 291]{Temam:book},
		\begin{equation*}
			\norm[1]{u(s)}_{L^2(\Omega)}^2 +
			2\nu \int_{0}^{s}\norm[0]{u}_{V}^2 \dif t  \le 
			\norm[0]{u_0}_{L^2(\Omega)}^2 + 2\int_{0}^{s} (f,u)_{\mathcal{T}_h} \dif t.
		\end{equation*}
		%

		Next, suppose that $k_t = 1$. In this case, \cref{eq:energy_ineq_inter1} 
		does not offer direct control over $\norm[0]{u_h(s)}_{L^2(\Omega)}$ for $s \in (t_{n_s}, t_{n_s+1})$.
		Instead, we define $\tilde{u}_h$ to be piecewise constant (in time) function
		satisfying $\tilde{u}_h|_{\mathcal{E}^m} = u_{m+1}^- = u_h(t_{m+1}^-)$, so that \cref{eq:energy_ineq_inter1} 
		yields
		\begin{equation*}
			\norm[0]{\tilde{u}_h(s)}_{L^2(\Omega)}^2 +
			2\nu\sum_{i=1}^3 \int_{0}^{s}\norm[0]{G_{h}^{k_s}(\boldsymbol{u}_h)}_{L^2(\Omega)}^2 \dif t   \le 
			\norm[0]{u_0^-}_{L^2(\Omega)}^2 + 2\int_{0}^{t_{n_s+1}} (f,u_h)_{\mathcal{T}_h} \dif t.
		\end{equation*}
		Note that if $u_h \rightharpoonup u$ in $L^2(0,T;L^2(\Omega)^d)$ as $h \to 0$, then also $\tilde{u}_h \rightharpoonup u$ 
		in $L^2(0,T;L^2(\Omega)^d)$ as $h \to 0$ \cite[Corollary 3.2]{Walkington:2005}. The weak lower semi-continuity of the norm in $L^2(0,T;L^2(\Omega)^d)$ yields
		\begin{equation*}
			\int_{0}^T  \norm[0]{u(s)}_{L^2(\Omega)}^2 \phi(s) \dif s \le \liminf_{h \to 0}	\int_{0}^T  \norm[0]{\tilde{u}_h(s)}_{L^2(\Omega)}^2 \phi(s) \dif s.
		\end{equation*}
		%
		The remainder of the proof is identical to the case $k_t = 0$.
	\end{proof}

%
	
	\appendix
	\renewcommand{\thesection}{\Alph{section}}
	
	\section{Discrete compactness of the velocity}
	
	In this section, we recall the discrete compactness theory for DG time stepping 
	developed by Walkington in \cite{Walkington:2010} with a minor modification to 
	fit the current non-conforming spatial discretization. 
	
	\begin{remark} \label{rem:alternate_compact}
		To our knowledge, the compactness theorem in \cite[Theorem 3.1]{Walkington:2010} for
		DG-in-time discretizations was first
		extended to non-conforming spatial approximations in \cite{Li:2015}. 
		%
		Note that we can apply \cite[Theorem 3.2]{Li:2015} in our setting to
		conclude that the sequence $\cbr{u_h}_{h \in \mathcal{H}}$ is relatively compact in $L^2(0,T;L^2(\Omega)^d)$ by selecting (using the notation of \cite{Li:2015} with $Y$ and $X$ replacing $V$ and $H$ therein to avoid confusion):
		\begin{equation*}
			\begin{split}
				&W = H_0^1(\Omega)^d, \quad W(\mathcal{T}_h)=  H^1(\mathcal{T}_h)^d, \quad Y = [BV(\Omega)^d \cap L^4(\Omega)^d; L^4(\Omega)^d]_{1/2},	\\& \quad  X = L^2(\Omega)^d, \quad W' = H^{-1}(\Omega)^d, \quad W_h = V_h,
			\end{split}
		\end{equation*}
		where $H^1(\mathcal{T}_h)^d$ is the broken $H^1$ space equipped with the $\norm{\cdot}_{1,h}$-norm \cite{Buffa:2009,Pietro:book, Dolejsi:book},
		$BV(\Omega)^d$ is the space of functions of bounded variation \cite{Buffa:2009}, and $[Y_0, Y_1]_{\theta}$ denotes the
		complex interpolation between Banach spaces $Y_0, Y_1$ with exponent $\theta \in (0,1)$ \cite{Bergh:book}.
		%
	\end{remark}
	We present below a simple proof of a special case of \cite[Theorem 3.2]{Li:2015} 
	that stays directly within the framework of
	broken polynomial spaces and their discrete functional analysis tools. This avoids
	the need to construct a non-conforming space that embeds compactly into $L^2(\Omega)^d$ and
	is made possible by the following discrete Rellich--Kondrachov theorem
	valid for broken polynomial spaces \cite[Theorem 5.6]{Pietro:book}:
	\begin{lemma}[Discrete Rellich--Kondrachov] \label{lem:rellich-kond}
		Let $\mathcal{H} \subset (0,|\Omega|)$ be a countable set of
		mesh sizes whose unique accumulation point is $0$.  We assume
		$\cbr[0]{(\mathcal{T}_h,\mathcal{F}_h)}_{h \in \mathcal{H}}$ is a
		sequence of conforming and shape-regular simplicial meshes. Let
		$\cbr{v_h}_{h > 0}$ be a sequence in
		$\cbr[0]{V_h}_{h \in \mathcal{H}}$ bounded in the
		$\norm{\cdot}_{1,h}$-norm.  Then, for all $1 \le q < \infty$ if
		$d=2$ and $1 \le q \le 6$ if $d = 3$, the sequence
		$\cbr{v_h}_{h > 0}$ is relatively compact in $L^q(\Omega)^d$.
	\end{lemma}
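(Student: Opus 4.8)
The plan is to establish relative compactness through the Kolmogorov--Riesz--Fr\'echet criterion applied to the zero-extensions of the discrete functions. First I would extend each $v_h$ by zero to all of $\mathbb{R}^d$ and, since $V_h$ is vector valued while the claimed compactness is in $L^q(\Omega)^d$, argue on each Cartesian component separately; writing $w_h$ for a generic scalar component, this is a broken polynomial whose $\norm{\cdot}_{1,h}$-seminorm is dominated by $\norm{v_h}_{1,h}$. By Kolmogorov--Riesz--Fr\'echet it then suffices to verify three properties uniformly in $h$: (a) boundedness in $L^q(\mathbb{R}^d)$; (b) tightness; and (c) equicontinuity of translations, $\sup_h \norm{w_h(\cdot+\xi) - w_h}_{L^q(\mathbb{R}^d)} \to 0$ as $\xi \to 0$. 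Property (a) is immediate from the discrete Sobolev embedding for broken polynomial spaces \cite[Theorem 6.1]{DiPietro:2010}, which gives $\norm{w_h}_{L^q(\Omega)} \lesssim \norm{w_h}_{1,h}$ uniformly for every $q$ below the critical Sobolev exponent $2^\ast = 2d/(d-2)$ (with $2^\ast = +\infty$ when $d=2$), and property (b) is automatic since every $w_h$ is supported in the fixed bounded set $\overline{\Omega}$.

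The crux is the translation estimate (c), which I would first establish in $L^1$. Fixing $\xi \in \mathbb{R}^d$ and a base point $x$, the fundamental theorem of calculus applied along the segment $[x, x+\xi]$ to the piecewise-smooth function $w_h$ yields
\begin{equation*}
    w_h(x+\xi) - w_h(x) = \int_0^1 \nabla_h w_h(x + s\xi)\cdot \xi \dif s \; + \!\!\sum_{F \text{ crossed}} \!\!\pm \jump{w_h}(y_F),
\end{equation*}
where the sum runs over the mesh faces met by the segment and $y_F$ is the crossing point. Integrating the absolute value over $x \in \mathbb{R}^d$ and using Fubini, the gradient contribution is bounded by $|\xi|\,\norm{\nabla_h w_h}_{L^1(\Omega)}$, while for each face $F$ the set of base points whose segment crosses $F$ forms a prism of volume $|F|\,|\xi\cdot n_F| \lesssim |\xi|\,|F|$, so the jump contribution is bounded by $|\xi| \sum_{F \in \mathcal{F}_h} \norm{\jump{w_h}}_{L^1(F)}$. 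Shape-regularity and conformity give $|K| \simeq h_K^d$, $|F| \simeq h_F^{d-1}$ and $h_F \simeq h_K$, so two applications of Cauchy--Schwarz (over elements using $\sum_K |K| = |\Omega|$, and over faces using $\sum_F |F| h_F \lesssim |\Omega|$) produce $\norm{\nabla_h w_h}_{L^1(\Omega)} \lesssim \norm{w_h}_{1,h}$ and $\sum_F \norm{\jump{w_h}}_{L^1(F)} \lesssim \norm{w_h}_{1,h}$. Collecting these gives the uniform bound $\norm{w_h(\cdot+\xi) - w_h}_{L^1(\mathbb{R}^d)} \lesssim |\xi|\,\norm{w_h}_{1,h}$.

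To upgrade to $L^q$ I would interpolate: for any $q < 2^\ast$ pick $\theta \in (0,1]$ with $1/q = \theta + (1-\theta)/2^\ast$ and apply the H\"older interpolation inequality together with the uniform $L^{2^\ast}$ bound from (a), obtaining
\begin{equation*}
    \norm{w_h(\cdot+\xi) - w_h}_{L^q(\mathbb{R}^d)} \lesssim \big(|\xi|\,\norm{w_h}_{1,h}\big)^{\theta}\,\norm{w_h}_{1,h}^{1-\theta} \longrightarrow 0
\end{equation*}
as $\xi \to 0$, uniformly in $h$ (the decay is genuine since $\theta>0$ strictly for $q<2^\ast$). This verifies (c) throughout the subcritical range, so Kolmogorov--Riesz--Fr\'echet furnishes a subsequence converging in $L^q(\mathbb{R}^d)$, hence in $L^q(\Omega)$; reassembling the Cartesian components delivers relative compactness of $\cbr{v_h}$ in $L^q(\Omega)^d$ for every $q$ below $2^\ast$, i.e. for all finite $q$ if $d=2$ and for $q<6$ if $d=3$.

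I expect the main obstacle to be the face-jump term in the translation estimate: correctly identifying which faces a translated segment meets, controlling the total crossing measure by $|\xi|\,|F|$ uniformly over the mesh family, and then trading the $L^1$ face norms for the weighted $L^2$ jump seminorm built into $\norm{\cdot}_{1,h}$. This last exchange is precisely where shape-regularity, conformity, and the face--element size equivalence are needed; by contrast, the gradient term and the final H\"older interpolation are routine.
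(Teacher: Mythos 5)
Your proof is correct in substance and, in fact, reconstructs the argument behind the result the paper invokes: the paper gives no proof of this lemma at all, citing instead \cite[Theorem 5.6]{Pietro:book}, whose proof proceeds exactly along your lines --- zero extension (legitimate here because $\norm{\cdot}_{1,h}$ sums over \emph{all} faces including boundary ones, so the extension's jumps across $\partial\Omega$ are penalized), a translation estimate of BV type in $L^1$ combining the broken gradient with the face jumps, the Kolmogorov--Riesz--Fr\'echet criterion, and interpolation against the discrete Sobolev embedding of \cite[Theorem 6.1]{DiPietro:2010}. Your handling of the jump term (Fubini over the prism of base points whose segment crosses each face, giving the factor $|\xi\cdot n_F|\le|\xi|$, then trading $L^1(F)$ for the weighted $L^2$ jump seminorm via $\sum_F |F|\,h_F \lesssim |\Omega|$) is the right mechanism, and it is indeed where conformity, shape-regularity, and the face--element size equivalence enter. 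One small repair for $d=2$: there is no uniform $L^\infty$ bound, so you cannot interpolate against a ``$2^*=\infty$'' endpoint; instead pick any finite $p>q$ (the discrete embedding holds for every finite $p$ when $d=2$) and interpolate between $L^1$ and $L^p$.

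The one real discrepancy is the endpoint: the lemma as stated claims relative compactness for $1\le q\le 6$ when $d=3$, whereas your argument yields only $q<6$ --- and this is not a defect of your proof. Compactness at the critical exponent is false in general: conforming $P_1$ interpolants of concentration bubbles $n^{1/2}\varphi(nx)$ with $n\sim 1/h$ are bounded in the $\norm{\cdot}_{1,h}$-norm and converge weakly to zero, yet their $L^6(\Omega)$ norms stay bounded away from zero, so no subsequence converges strongly in $L^6$. The cited theorem is likewise a subcritical statement; the inclusion of $q=6$ in the paper's lemma appears to conflate the discrete Sobolev \emph{embedding} (which does hold at $q=6$) with \emph{compactness} (which does not). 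Since the paper only ever applies the lemma with $q=2$ (in Step two of the proof of \Cref{thm:compactness}), nothing downstream is affected, but your proof should be read as establishing the corrected statement with $q<6$ for $d=3$.
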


	\begin{theorem}[Compactness]\label{thm:compactness}
		Let $\mathcal{H} \subset (0,|\Omega|)$ be a countable set of
		mesh sizes whose unique accumulation point is $0$. Assume that
		$\cbr[0]{(\mathfrak{T}_h,\mathfrak{F}_h)}_{h \in \mathcal{H}}$ is a
		sequence of conforming and quasi-uniform space-time prismatic
		meshes. Let $q = 4/d$. Let the sequence
		$\cbr{\boldsymbol{u}_h}_{h \in \mathcal{H}}$ be such that for each
		$h \in \mathcal{H}$,
		$\boldsymbol{u}_h \in \mathcal{V}_h^{\text{div}} \times
		\bar{\mathcal{V}}_h$. Then, $\cbr{u_h}_{h \in \mathcal{H}}$ is relatively compact in $L^2(0,T;L^2(\Omega)^d)$ if:
		\begin{enumerate}[label=(\roman*)]
			\item $\cbr[0]{\boldsymbol{u}_h}_{h \in \mathcal{H}}$ is uniformly bounded in the sense that 
			$\int_0^T \tnorm{\boldsymbol{u}_h}_{v}^2 \dif t \le M$
			for some $M>0$ independent of the mesh parameters $h$ and $\tau$. 
			\item For each $h \in \mathcal{H}$, the following bound on the discrete time derivative of $u_h$ holds uniformly for $q' = 4/(4-d)$:
			\begin{equation*}
				\bigg|\int_0^T ( \mathcal{D}_{t}^{k_t}(u_h), v_h)_{\mathcal{T}_h} \dif t \bigg| \lesssim { \del[3]{ \int_0^T \tnorm{\boldsymbol{v}_h}^{q'}_v \dif t}^{1/q'}}, \quad \forall \boldsymbol{v}_h \in \mathcal{V}_h^{\emph{div}} \times \bar{\mathcal{V}}_h.
			\end{equation*}
		\end{enumerate}
	\end{theorem}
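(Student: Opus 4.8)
The plan is to verify relative compactness of $\cbr{u_h}_{h \in \mathcal{H}}$ in $L^2(0,T;L^2(\Omega)^d)$ through the Aubin--Lions--Simon mechanism, transcribed into the discontinuous-in-time broken polynomial setting. Such an argument rests on three ingredients: a uniform strong spatial bound, a compact spatial embedding, and a uniform weak control in time. Hypothesis (i), together with \cref{eq:h1normbnd}, furnishes the first by placing $\cbr{u_h}$ uniformly in $L^2(0,T;(V_h,\norm{\cdot}_{1,h}))$; the second is supplied verbatim by the discrete Rellich--Kondrachov theorem \Cref{lem:rellich-kond}, which renders $\norm{\cdot}_{1,h}$-bounded sequences relatively compact in $L^2(\Omega)^d$; and hypothesis (ii) provides the third by controlling the discrete time derivative $\mathcal{D}_t^{k_t}(u_h)$ in the dual of $L^{q'}(0,T;(V_h^{\text{div}} \times \bar V_h))$.

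First I would establish a uniform interpolation (Ehrling) inequality valid across the whole sequence of discrete spaces: for every $\epsilon > 0$ there is a constant $C_\epsilon$ independent of $h$ with
\[
\norm{v_h}_{L^2(\Omega)} \le \epsilon \norm{v_h}_{1,h} + C_\epsilon \norm{v_h}_{\star}, \qquad \forall v_h \in V_h^{\text{div}},
\]
where $\norm{\cdot}_\star$ is a spatial discrete weak norm dual to $\tnorm{\cdot}_v$. This is proven by contradiction: were it to fail, there would exist a sequence with unit $L^2(\Omega)$-norm, bounded $\norm{\cdot}_{1,h}$-norm, and vanishing $\norm{\cdot}_\star$-norm; since $V_h^{\text{div}} \subset V_h$, \Cref{lem:rellich-kond} then extracts an $L^2(\Omega)$-strongly convergent subsequence whose limit has unit norm, while the vanishing weak norm forces that limit to be zero, a contradiction. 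It is exactly here that the discrete Rellich--Kondrachov theorem is indispensable, and its use circumvents the explicit construction of an auxiliary compactly embedded space.

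The heart of the proof is the temporal equicontinuity estimate, namely that the translates $u_h(\cdot + \delta) - u_h(\cdot)$ tend to zero uniformly in $h$ as $\delta \to 0$ when measured in a time-integrated weak norm governed by hypothesis (ii). On a single slab $I_n$ the difference $u_h(t+\delta)-u_h(t)$ is the integral of the broken time derivative $\partial_\tau u_h$, but a translate of length $\delta$ may straddle slab interfaces $t_n$, across which $u_h$ jumps by $\sbr{u_h}_n$. It is precisely the combination $\mathcal{D}_t^{k_t}(u_h) = \partial_\tau u_h + R^{k_t}(u_h)$ that is adapted to this structure: by \cref{eq:global_time_lift} its lifting component encodes the nodal jumps, so the time translate can be controlled by duality pairings against admissible piecewise-polynomial-in-time test functions in $\mathcal{V}_h^{\text{div}} \times \bar{\mathcal{V}}_h$ and bounded, via hypothesis (ii) and the quasi-uniformity of the time partition, by a vanishing power of $\delta$.

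Finally, I would combine these pieces. Applying the uniform Ehrling inequality pointwise in time to $u_h(t+\delta)-u_h(t)$, squaring, and integrating, the strong term is controlled by $\epsilon^2$ times the uniform $L^2(0,T;(V_h,\norm{\cdot}_{1,h}))$ bound from (i), while the weak term is made small by the temporal estimate (interpolating, when $d=3$, between the weak translate bound and the uniform strong bound to recover the $L^2$-in-time integrability); letting first $\delta \to 0$ and then $\epsilon \to 0$ shows the $L^2(0,T;L^2(\Omega)^d)$-translates vanish uniformly. Together with the relative compactness of the time averages in $L^2(\Omega)^d$ afforded by \Cref{lem:rellich-kond}, Simon's criterion yields relative compactness in $L^2(0,T;L^2(\Omega)^d)$. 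I expect the temporal estimate to be the main obstacle: unlike the conforming, continuous-in-time case, the DG-in-time iterates are discontinuous across slab interfaces, so one must carefully account for how many interfaces a translate of length $\delta$ can cross and track the associated jump contributions through the time lifting, using quasi-uniformity to convert the resulting sums over time nodes into the integral dual bound (ii).
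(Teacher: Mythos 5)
Your architecture is genuinely different from the paper's. You propose the classical Aubin--Lions--Simon mechanism transplanted to the discrete setting: a uniform discrete Ehrling inequality (whose contradiction proof via \Cref{lem:rellich-kond} is sound, and is standard in the discrete functional analysis literature), a weak-norm translate estimate extracted from hypothesis (ii), and Simon's averages-plus-translates criterion. The paper instead follows Walkington: it verifies that hypothesis (ii) bounds $F_h:\boldsymbol{v}_h\mapsto(\mathcal{D}_t^{k_t}(u_h),v_h)_{\mathcal{T}_h}$ uniformly in $L^q(0,T;(V_h^{\text{div}}\times\bar V_h)')$, invokes \cite[Lemma 3.3]{Walkington:2010} to obtain the translate estimate \emph{directly in the strong norm}, namely $\int_\delta^T\norm{u_h(t)-u_h(t-\delta)}_{L^2(\Omega)}^2\dif t\le C\max(\tau,\delta)^{1/q'}\delta^{1/2}$, and then uses mollification in time, \Cref{lem:rellich-kond} pointwise in $t$, Arzel\`a--Ascoli, and an interior/endpoint argument. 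For $d=2$, where $q=q'=2$, your route closes and would be an acceptable, arguably more transparent, alternative (granting that your weak translate estimate, with its slab-jump and partial-slab bookkeeping, is itself essentially Walkington's Lemma 3.3 in disguise).

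For $d=3$, however, there is a genuine gap at exactly the step you dispatch parenthetically. Hypothesis (ii) controls $\mathcal{D}_t^{k_t}(u_h)$ only in an $L^{q}=L^{4/3}$-in-time dual norm, so the translate smallness it yields is smallness of $\norm{u_h(\cdot+\delta)-u_h(\cdot)}_{\star}$ in $L^{4/3}(0,T-\delta)$; the only boundedness available for this quantity under (i)--(ii) is in $L^{2}(0,T-\delta)$, via $\norm{\cdot}_\star\lesssim\norm{\cdot}_{L^2(\Omega)}\lesssim\tnorm{\cdot}_v$ and (i). No interpolation between ``small in $L^{4/3}$'' and ``bounded in $L^2$'' can produce ``small in $L^2$'': $L^2$ is the endpoint of that scale, and one would need boundedness in $L^r$-in-time for some $r>2$ (e.g. an $L^\infty(0,T;L^2(\Omega)^d)$ bound), which the theorem does not assume. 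Walkington's Lemma 3.4 does supply $L^r(0,T;L^2(\Omega)^d)$ bounds for $r<4$, but only as a consequence of the strong equicontinuity estimate, so appealing to it here would be circular. The paper's route avoids the problem because the strong translate estimate is obtained by pairing the translate against \emph{itself}: after Fubini, the object occupying the dual slot of hypothesis (ii) is a local time average of the translate over a window of length $\lesssim\max(\tau,\delta)$, which by Cauchy--Schwarz and (i) is small of order $\max(\tau,\delta)^{1/2}$ uniformly in time, and hence enters the $L^{q'}$-norm in (ii) with no loss of temporal integrability --- this is the origin of the factor $\delta^{1/2}$ in the paper's estimate. Your Ehrling splitting, applied pointwise in time and then integrated, decouples the two factors and destroys precisely this bilinear structure; repairing the $d=3$ case forces you back to the self-pairing argument, at which point the Ehrling inequality is superfluous and the proof reverts to the paper's.
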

	%
	\begin{proof}
		The proof, which proceeds in three steps, follows closely the proof
		of \cite[Theorem 3.1]{Walkington:2010} with minor modifications.
		
		\vspace{2.5mm}
		\textbf{Step one (equicontinuity):}
		Step one follows exactly the proof of \cite[Lemma 3.3]{Walkington:2010}; here
		we show that the assumptions therein can be interpreted as a uniform bound on the discrete
		time derivative \cref{eq:discrete_time_der}. By definition, it holds that
		\begin{equation}
			\int_{I_n}(\partial_t u_h, v_h )_{\mathcal{T}_h} +(\sbr{u_h}_n,v_n^+)_{\mathcal{T}_h}= \int_{I_n} (\mathcal{D}_{t}^{k_t}(u_h), v_h )_{\mathcal{T}_h} \dif t, \quad \forall v_h \in \mathcal{V}_h.
		\end{equation}
		Comparing with \cite[Lemma 3.3]{Walkington:2010}, we require $F_h: \boldsymbol{v}_h \mapsto (\mathcal{D}_t^{k_t}(u_h),v_h)_{\mathcal{T}_h}$ to be uniformly bounded $L^{q}(0,T;(V_h^{\text{div}} \times \bar{V}_h)')$, where $(V_h^{\text{div}} \times \bar{V}_h)'$ is the dual space of $V_h^{\text{div}} \times \bar{V}_h$. We show that assumption (ii) suffices. As the space $V_h^{\text{div}} \times \bar{V}_h$ and its dual are finite-dimensional (hence separable), we make the identification
		$L^q(0,T; (V_h^{\text{div}} \times \bar{V}_h)') \cong L^{q'}(0,T; V_h^{\text{div}} \times \bar{V}_h)$, 
		and we have
		\begin{equation} \label{eq:semidiscrete_dual_norm}
			\norm[0]{F_h}_{L^q(0,T; (V_h^{\text{div}} \times \bar{V}_h)')}  = \sup_{0 \ne \boldsymbol{v} \in L^{q'}(0,T; V_h^{\text{div}} \times \bar{V}_h)}   \frac{\big|\int_0^T F_h(\boldsymbol{v})\dif t\big|}{ \del{ \int_0^T \tnorm{\boldsymbol{v}}^{q'}_v \dif t}^{1/q'}}.
		\end{equation}
		Choose $F_h$ in \cref{eq:semidiscrete_dual_norm} to be the functional that maps for each $t \in [0,T]$,
		\begin{equation*}
			L^{q'}(0,T; V_h^{\text{div}} \times \bar{V}_h) \ni (v,\bar v)=: \boldsymbol{v} \mapsto  ( \mathcal{D}_{t}^{k_t}(u_h), v)_{\mathcal{T}_h} \in \mathbb{R}.
		\end{equation*}
		%
		Since $\boldsymbol{v} \in L^{q'}(0,T;V_h^{\text{div}} \times \bar{V}_h)$, we have $\Pi^t \boldsymbol{v} \in \mathcal{V}_h^{\text{div}} \times \mathcal{\bar{V}}_h$,
		and the stability of the $L^2$-projection $\Pi^t$ in $L^{q'}(I_n)$ \cite{Douglas:1975} yields
		%
		\begin{equation*}
			\begin{split}
				\norm[0]{F_h}_{L^q(0,T; (V_h^{\text{div}} \times \bar{V}_h)')} 
				& \lesssim  \sup_{\boldsymbol{v} \in L^{q'}(0,T; V_h^{\text{div}} \times \bar{V}_h)}   \frac{\big|\int_0^T ( \mathcal{D}_{t}^{k_t}(u_h), \Pi^t v )_{\mathcal{T}_h} \dif t\big|}{ \del{ \int_0^T \tnorm{\Pi^t \boldsymbol{v}}^{q'}_v \dif t}^{1/q'}} \\
				& \lesssim \sup_{\boldsymbol{v}_h \in \mathcal{V}_h^{\text{div}} \times \bar{\mathcal{V}}_h}   \frac{\big|\int_0^T ( \mathcal{D}_{t}^{k_t}(u_h), v_h)_{\mathcal{T}_h} \dif t\big|}{ \del{ \int_0^T \tnorm{\boldsymbol{v}_h}^{q'}_v \dif t}^{1/q'}},
			\end{split}
		\end{equation*}
		which is uniformly bounded by assumption (ii) of the theorem. Proceeding in an identical fashion to \cite[Lemma 3.3]{Walkington:2010}, we find that
		\begin{equation} \label{eq:equicontinuity}
			\int_{\delta}^T \norm{u_h(t) - u_h(t-\delta)}_{L^2(\Omega)}^2 \dif t 
			\le C 
			\max(\tau,\delta)^{1/q'}\delta^{1/2}.
		\end{equation}
		\vspace{2.5mm}
		
		\indent \textbf{Step two (relative compactness in
			$L^2(\theta, T-\theta; L^2(\Omega)^d)$):} We aim to show
		that for all $0 < \theta < T$, the set
		$\cbr{u_h|_{\theta,T-\theta} \; | \; h \in \mathcal{H}}$ is
		relatively compact in $L^2(\theta, T-\theta;
		L^2(\Omega)^d)$. The proof is a minor modification of
		\cite[Theorem 3.2]{Walkington:2010}.
		To this end, we construct a sequence of regularized
		functions so that we may leverage the classical
		Arzel\`{a}--Ascoli theorem (see e.g. \cite[Lemma
		1]{Simon:1986}). Let $\phi \in C^{\infty}_c(-1,1)$ be
		nonnegative with unit integral. For $\delta > 0$, set
		$\phi_\delta(s) = (1/\delta)\phi(s/\delta)$.  Extend $u_h$ by
		zero outside of $[0,T]$ and consider the sequence of mollified
		functions $\cbr{ u_h^{\delta}}_{h \in \mathcal{H}}$, where
		$u_h^{\delta}(t) = \phi_\delta * u_h(t)$.
		%
		
		Since $\int_0^T \norm{u_h}_{1,h}^2 \dif t$ is uniformly bounded by assumption, we have
		\begin{equation*}
			\begin{split}
				\norm[0]{u_h^{\delta}(t)}_{1,h}^2 
				& \le \delta \sup_{|s| < \delta} |\phi_{\delta}(t-s)|^2  \int_{0}^{\delta} \norm{u_h(s)}_{1,h}^2 \dif s
				\le M^{\star},
			\end{split}
		\end{equation*}
		with $M^{\star}$ a constant independent of $h$ and $\tau$.
		Thus, by 
		\Cref{lem:rellich-kond}, the sequence $\cbr[0]{ u_h^{\delta}(t)}_{h
			\in \mathcal{H}}$ is relatively compact in
		$L^2(\Omega)^d$ for each $t \in
		[0,T]$. Furthermore, the uniform Lipschitz continuity of the
		mollifiers $\phi_{\delta}(s)$ ensures the sequence $\cbr{
			u_h^{\delta}(t)}_{h \in \mathcal{H}}$ is equicontinuous on $[0,T]$.
		By the Arzel\`{a}--Ascoli theorem, the sequence $\cbr{ u_h^{\delta}}_{h \in \mathcal{H}}$ is relatively compact 
		in $C(0,T;L^2(\Omega)^d)$ and thus also $L^2(0,T;L^2(\Omega)^d)$ as the former embeds continuously into the latter.
		As relatively compact sets are totally bounded,
		\begin{equation*} \label{eq:lp_totally_bnd}
			\forall \epsilon > 0,\; \exists {h_1,\dots, h_M} \subset \mathcal{H} \; s.t. \; \cbr[0]{ u_h^{\delta}}_{h \in \mathcal{H}}  \subset \bigcup_{i=1}^M B_{\epsilon}(u_{h_i}),
		\end{equation*}
		where $B_{\epsilon}$ is an $\epsilon$-ball  in the metric induced by the $L^2(0,T;L^2(\Omega)^d)$-norm.
		The remainder of the proof that the set $\cbr{u_h|_{[\theta,T-\theta]} \; | \; h \in \mathcal{H}}$ is relatively compact in $L^2(\theta,T-\theta;L^2(\Omega)^d)$ for all $0 < \theta < T/2$ is identical to that of \cite[Theorem 3.2]{Walkington:2010}.
		\vspace{2.5mm}
		
		\textbf{Step three (finishing up):}
		The equicontinuity \cref{eq:equicontinuity} and \cite[Lemma 3.4]{Walkington:2010} ensure that
		the sequence $\cbr{u_h}_{h \in \mathcal{H}}$ is bounded uniformly in $L^r(0,T;L^2(\Omega)^d)$ for $1 \le r <4$.
		Consequently, for all $\epsilon >0$ we can find $\theta > 0$ such that
		\begin{equation*} \label{eq:uniform_integr}
			\int_0^{\theta} \norm{u_h(t)}_{L^2(\Omega)}^2 \dif t +	\int_{T-\theta}^{T} \norm{u_h(t)}_{L^2(\Omega)}^2 \dif t \le \epsilon.
		\end{equation*}
		%
		It follows that $\cbr{u_h \; | \; h \in \mathcal{H}}$ is the uniform limit of relatively compact sets in
		$L^2(0,T;L^2(\Omega)^d)$ \cite[Section 2]{Simon:1986}.
		%
		Thus, $\cbr{u_h \; | \; h \in \mathcal{H}}$ is relatively compact in $L^2(0,T;L^2(\Omega)^d)$.
	\end{proof}

	\section{Properties of the projections $\Pi$ and $\bar{\Pi}$}
	
	\subsection{Approximation properties of $\Pi^t$ and $\Pi_h^{\text{div}}$}
	\begin{lemma}[Approximation properties of $\Pi_h^{\text{div}}$ and $\Pi^t$]  \label{lem:approximation_div}
		Let $\ell \ge 0$ and suppose that $\eta \in W^{\ell+1,\infty}(0,T)$ and $\psi \in H^{\ell+1}(\Omega)^d$. Then, for all $n = 0,\dots,N-1$,
		\begin{equation} \label{eq:l2_time_proj_est}
			\norm[0]{u - \Pi^t u}_{L^{\infty}(I_n) } \lesssim \tau^{\ell+1} |\eta|_{W^{\ell+ 1,\infty}(I_n)},
		\end{equation}
		and if $\mathcal{T}_h$ is conforming and quasi-uniform, we have for $0 \le m \le 2$ and $K \in \mathcal{T}_h$,
		\begin{align} \label{eq:div_proj_est_1}
			\sum_{K \in \mathcal{T}_h}\norm[0]{\psi - \Pi_h^{\emph{div}} \psi}_{H^{m}(K)}^2 &\lesssim h^{2(\ell - m + 1)} |\psi|_{H^{\ell+1}(\Omega)}^2, \\
			\norm[0]{\psi - \Pi_h^{\emph{div}} \psi}_{L^{\infty}(\Omega)} &\lesssim h^{1/2} |\psi|_{H^{2}(\Omega)}, \label{eq:div_proj_est_2}
		\end{align}
		the latter requiring $\ell \ge 1$.
	\end{lemma}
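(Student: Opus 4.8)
The plan is to treat the temporal and spatial projections separately and to extract the higher-order norms from the basic $L^2$ estimate by inverse inequalities, so that quasi-uniformity enters only at that last step. For the temporal bound \cref{eq:l2_time_proj_est} I would pass to the reference interval $\hat{I} = (0,1)$ through the affine map $I_n \to \hat{I}$ and let $\hat{\Pi}$ denote the $L^2(\hat{I})$-orthogonal projection onto $P_{k_t}(\hat{I})$. Two facts drive the estimate: $\hat{\Pi}$ reproduces polynomials of degree at most $k_t$, and $\hat{\Pi}$ is $L^\infty(\hat{I})$-bounded, the latter because it is $L^2$-bounded and all norms are equivalent on the finite-dimensional space $P_{k_t}(\hat{I})$. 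The Bramble--Hilbert lemma then gives $\norm{\eta - \hat{\Pi}\eta}_{L^\infty(\hat I)} \lesssim \envert{\eta}_{W^{\ell+1,\infty}(\hat I)}$ whenever $\ell \le k_t$, as is the case in all our applications. Undoing the scaling leaves the $L^\infty$-norm unchanged while each temporal derivative contributes a factor $\Delta t_n \le \tau$, producing the claimed $\tau^{\ell+1}$.

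For the spatial estimates I would first prove the case $m = 0$ of \cref{eq:div_proj_est_1}. Since $\Pi_h^{\text{div}}\psi$ is by definition the $L^2(\Omega)$-best approximation of $\psi$ in $V_h^{\text{div}}$, it suffices to produce one comparison function $w_h \in V_h^{\text{div}}$ with $\norm{\psi - w_h}_{L^2(\Omega)} \lesssim h^{\ell+1}\envert{\psi}_{H^{\ell+1}(\Omega)}$. Here I would exploit the characterization $V_h^{\text{div}} = V_h \cap H$ recorded after \cref{eq:disc_div_free}: the canonical $H(\text{div};\Omega)$ (BDM/RT-type) interpolant $I_h$ commutes with the divergence, so for solenoidal $\psi$ it satisfies $\nabla \cdot I_h \psi = 0$ and lands in $V_h^{\text{div}}$, while enjoying the optimal local bound $\norm{\psi - I_h\psi}_{L^2(K)} \lesssim h^{\ell+1}\envert{\psi}_{H^{\ell+1}(K)}$; summing over $K \in \mathcal{T}_h$ gives the $m=0$ estimate. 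This is precisely the construction carried out in \cite{Kirk:2021}.

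To reach $m \in \cbr{1,2}$ and the $L^\infty$ bound I would split $\psi - \Pi_h^{\text{div}}\psi = (\psi - I_h\psi) + (I_h\psi - \Pi_h^{\text{div}}\psi)$. The interpolation error is controlled by the standard local $H^m$ estimate for $I_h$, and the second summand, being a broken polynomial, is handled by the inverse inequality $\norm{z_h}_{H^m(K)} \lesssim h^{-m}\norm{z_h}_{L^2(K)}$, whose validity rests on shape-regularity and quasi-uniformity; combined with the triangle inequality and the $m=0$ estimate this yields $\norm{I_h\psi - \Pi_h^{\text{div}}\psi}_{H^m(K)} \lesssim h^{\ell+1-m}\envert{\psi}_{H^{\ell+1}(\Omega)}$, and squaring and summing gives \cref{eq:div_proj_est_1}. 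The $L^\infty$ bound \cref{eq:div_proj_est_2} is obtained identically, now using $\norm{z_h}_{L^\infty(K)} \lesssim h^{-d/2}\norm{z_h}_{L^2(K)}$ on the discrete part with $\ell = 1$; this produces a factor $h^{2-d/2}$, bounded by $h^{1/2}$ for $d \in \cbr{2,3}$, while the interpolation error $\norm{\psi - I_h\psi}_{L^\infty(\Omega)}$ is controlled through the embedding $H^2(\Omega) \hookrightarrow L^\infty(\Omega)$, valid since $d \le 3$.

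The main obstacle is the $m = 0$ spatial estimate. Because the range of $\Pi_h^{\text{div}}$ is the constrained divergence-free subspace rather than the full broken polynomial space, one cannot simply invoke componentwise $L^2$-projection estimates; the crux is producing a divergence-preserving comparison function, which is exactly where the $H(\text{div};\Omega)$-conformity and commuting-diagram structure of the pressure-robust HDG space (as established in \cite{Kirk:2021}) are essential. Every subsequent step---the higher Sobolev norms and the $L^\infty$ bound---is then a routine consequence of inverse inequalities under the quasi-uniformity hypothesis.
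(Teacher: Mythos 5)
Your proposal is correct, and it is considerably more self-contained than the paper's own proof, which is largely by citation: \cref{eq:l2_time_proj_est} is deferred to \cite[Lemma 11.18]{Ern:booki}, and \cref{eq:div_proj_est_1} is deferred to \cite{Kirk:2021}. Your reconstruction of the latter---$L^2$-best approximation in $V_h^{\text{div}}$ compared against the divergence-preserving BDM/RT interpolant for $m=0$, then triangle inequality plus inverse estimates for $m\in\{1,2\}$---is the standard route and correctly isolates both where solenoidality of $\psi$ is indispensable and where quasi-uniformity enters. The one part the paper proves in detail is the $L^\infty$ bound \cref{eq:div_proj_est_2}, and there the two arguments differ in organization but not in substance: the paper pulls the full error $\psi-\Pi_h^{\text{div}}\psi$ back to the reference simplex, applies the embedding $H^2(\widehat K)\subset L^\infty(\widehat K)$, and pushes forward with the quasi-uniform scaling $\norm[0]{J_K}_{\ell^2}^2\envert{\det J_K}^{-1/2}\lesssim h_K^{2-d/2}$, then invokes the already-established case $m=2$, $\ell=1$ of \cref{eq:div_proj_est_1}; you instead split off the interpolant and hit the discrete remainder with the $L^\infty$--$L^2$ inverse inequality. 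Both routes produce $h^{2-d/2}\le h^{1/2}$ for $d\le3$; the paper's is marginally shorter because it bootstraps off the $H^2$ estimate it already has, while yours avoids needing the $m=2$ case first. One caveat in your write-up: the interpolation-error term $\norm[0]{\psi-I_h\psi}_{L^\infty(\Omega)}$ cannot be controlled by the \emph{global} embedding $H^2(\Omega)\hookrightarrow L^\infty(\Omega)$ as literally stated, since $I_h\psi$ is only piecewise $H^2$; you need the element-wise scaled embedding (reference-element pullback), which is exactly the device the paper uses and yields the same $h_K^{2-d/2}$ factor, so your conclusion stands. Finally, your side conditions---$\ell\le k_t$ (resp.\ $\ell\le k_s$) and solenoidality of $\psi$ with vanishing normal trace---are indeed implicit in the lemma as stated and satisfied in every application in the paper, so flagging them is a point in your favor rather than a gap.
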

	\begin{proof}
		Estimate \cref{eq:l2_time_proj_est} is standard, see e.g. \cite[Lemma 11.18]{Ern:booki}.
		The proof of \cref{eq:div_proj_est_1} can be found in \cite{Kirk:2021}; we note that therein it is assumed
		that $\ell \ge 1$ but the proof easily extends to the case $\ell = 0$.
		We now show \cref{eq:div_proj_est_2}. 
		Let $\widehat{K}$ be the reference simplex in $\mathbb{R}^d$ and suppose that $F_K: \widehat{K} \to K$ is an affine mapping;
		denote its Jacobian matrix by $J_K$.
		As $H^2(\widehat K) \subset L^{\infty}(\widehat K)$ with continuous embedding for $d \le 3$, we have
		by repeated use of \cite[Lemma 11.7]{Ern:booki}:
		\begin{equation*} 
			\begin{split}
				\norm[0]{\psi - \Pi_h^{\text{div}} \psi}_{L^{\infty}(K)} 
				& \lesssim \norm[0]{J_K}_{\ell^2}^2 |\det J_K|^{-1/2} \norm[0]{\psi - \Pi_h^{\text{div}} \psi}_{H^{2}(K)}.
			\end{split}
		\end{equation*}
		Since $\mathcal{T}_h$ is assumed quasi-uniform and hence shape-regular, we have 
		$\norm[0]{J_K}_{\ell^2}^2 \lesssim h_K^2$ and $|\det J_K|^{-1/2} \lesssim h_K^{-d/2}$ (see e.g. \cite[Lemma 11.1]{Ern:booki}, \cite[Chapter 1.2]{Sayas:book}).
		Thus, for $d \le 3$,
		\begin{equation*}
			\begin{split}
				\norm[0]{\psi - \Pi_h^{\text{div}} \psi}_{L^{\infty}(K)} 
				& \lesssim h^{1/2} \norm[0]{\psi}_{H^{2}(\Omega)}.
			\end{split}
		\end{equation*}
		The result follows by noting that this bound holds uniformly for all $K \in \mathcal{T}_h$.
	\end{proof}

	\subsection{Proof of \Cref{prop:proj_bnd}}
	\label{ss:proj_bnd_proof}
	
	It suffices to show the inequality in \cref{eq:proj_bnd} on a single
	space-time slab $\mathcal{E}^n$; the result then follows
	by summing over all space-time slabs. Let $\varphi \in \mathscr{M}$.
	By the definitions of the norm $\tnorm{\cdot}_v$ and the projections $\Pi \varphi$ and $\bar{\Pi} \varphi$ given in \cref{eq:test_func_for_time_der_bnd}, we have
	\begin{equation*}
		\begin{split}
			&\tnorm{(\Pi \varphi, \bar{\Pi} \varphi)}_v^{4/(4-d)}  \\
			& =  \del[3]{ \sum_{K \in \mathcal{T}_h} \int_{K} |\nabla \Pi_h^{\text{div}} \Pi^t \sum_{k=1}^M\eta_k \psi_k|^2 \dif x +  \sum_{K \in \mathcal{T}_h}  h_K^{-1} \int_{\partial K} |(\Pi_h^{\text{div}} - \bar{\Pi}_h) \Pi^t \sum_{k=1}^M \eta_k\psi_k|^2 \dif x }^{2/(4-d)}.
		\end{split}
	\end{equation*}
	Available approximation results for the projection $\bar{\Pi}_h$ and \cref{eq:div_proj_est_1} 
	yield for $\Psi \in H^1(\Omega)^d$,
	\begin{equation*}
		\sum_{K \in \mathcal{T}_h} \del{ \norm[0]{\nabla \Pi_h^{\text{div}} \Psi}_{L^2(K)}^2 + h_K^{-1} \norm[0]{(\Pi_h^{\text{div}}- \bar{\Pi}_h)\Psi}_{L^2(\partial K)}^2 } \lesssim \norm{\nabla \Psi}_{L^2(\Omega)}^2.
	\end{equation*}
	Therefore, we have
	\begin{equation} \label{eq:proj_bnd2_inter}
		\tnorm{(\Pi \varphi, \bar{\Pi} \varphi)}_v^{4/(4-d)} \lesssim
		\del[4]{\int_{\Omega} |\Pi^t \sum_{k=1}^M\eta_k \nabla  \psi_k|^2 \dif x }^{2/(4-d)}, \quad \forall \varphi \in \mathscr{M}.
	\end{equation}
	If $d=2$, we can integrate \cref{eq:proj_bnd2_inter} over $I_n$ and use Fubini's
	theorem and the stability of the projection $\Pi^t$ in $L^2(I_n)$ to find
	\begin{equation*}
		\int_{I_n} \tnorm{(\Pi \varphi, \bar{\Pi} \varphi)}_v^{2}\dif t  
		\lesssim \int_{I_n} \norm{\varphi}_{V}^2 \dif t, \quad \forall \varphi \in \mathscr{M},
	\end{equation*}
	as required.
	On the other hand, if $d=3$, we integrate \cref{eq:proj_bnd2_inter} over $I_n$,
	and apply a finite-dimensional scaling argument between norms in $L^2(I_n)$ and $L^1(I_n)$
	(see e.g. \cite[Lemma 1.50]{Pietro:book}) to find:
	\begin{equation} \label{eq:proj_bnd_3d_inter}
		\int_{I_n} \tnorm{(\Pi \varphi , \bar{\Pi} \varphi )}_v^4 \dif t 
		\lesssim \tau^{-1}  \del[4]{\int_{I_n} \int_{\Omega} |\Pi^t \sum_{k=1}^M\eta_k \nabla  \psi_k|^2 \dif x \dif t }^{2}.
	\end{equation}
	Using Fubini's theorem to interchange the temporal and spatial integrals in \cref{eq:proj_bnd_3d_inter} as necessary, we can apply
	the stability of the projection $\Pi^t$ in the $L^2(I_n)$ norm followed by the Cauchy--Schwarz inequality applied
	to the temporal integral to find
	\begin{equation*}
		\begin{split}
			\int_{I_n} \tnorm{(\Pi \varphi , \bar{\Pi} \varphi )}_v^4 \dif t  & \lesssim \tau^{-1}   \del[3]{ \int_{I_n} \norm{\varphi}_V^2 \dif t }^{2}  \lesssim \int_{I_n} \norm{\varphi}_V^4 \dif t, \quad \forall \varphi \in \mathscr{M}.
		\end{split}
	\end{equation*}

\bibliographystyle{amsplain.bst}
\bibliography{references}
\end{document}